\documentclass[12pt]{article}
\usepackage{amsmath, amsfonts, graphicx, amsthm, color, enumerate, hyperref, amssymb}

\allowdisplaybreaks

\usepackage{lineno}

\DeclareMathOperator{\id}{id}

\DeclareMathOperator{\Mat}{Mat}

\DeclareMathOperator{\G}{G}

\DeclareMathOperator{\q}{q}

\DeclareMathOperator{\spa}{span}

\DeclareMathOperator{\GL}{GL}

\DeclareMathOperator{\Sp}{Sp}

\DeclareMathOperator{\Orth}{O}

\DeclareMathOperator{\rad}{rad}

\newtheorem{thm}[subsection]{Theorem}
\newtheorem*{thm*}{Theorem}
\newtheorem{prop}[subsection]{Proposition}
\newtheorem*{prop*}{Proposition}
\newtheorem{lemma}[subsection]{Lemma}
\newtheorem*{lemma*}{Lemma}
\newtheorem{corollary}[subsection]{Corollary}
\newtheorem*{corollary*}{Corollary}
\newtheorem{rmk}[subsection]{Remark}

\newtheorem{definition}[subsection]{Definition}



\title{Fixed point groups of involutions of type $\Orth(\q,k)$ over a field of characteristic two}
\author{Mark Hunnell \\ \textit{Winston-Salem State University} \\ hunnellm@wssu.edu \\ \\
John Hutchens \\ \textit{Winston-Salem State University} \\ hutchensjd@wssu.edu }

\begin{document}
\maketitle

\vfill
\noindent Declarations of interest: none \\
This research did not receive any specific grant from funding agencies in the public, commercial, or not-for-profit sectors.\\
Mathematics Subject Classification: 20G15\\
Keywords: orthogonal groups; quadratic forms; involutions; algebraic groups; characteristic 2
\newpage

\begin{abstract}
    For $\Orth(\q,k)$, the orthogonal group over a field $k$ of characteristic 2 with respect to a quadratic form $\q$, we discuss the isomorphism classes of fixed points of involutions.  When the quadratic space is either totally singular or nonsingular, a full classification of the isomorphism classes is given.  We also give some implications of these results for a general quadratic space over a field of characteristic $2$.  
\end{abstract}

\section{Introduction}

We continue the discussion from \cite{hhs21} on $\Orth(\q,k)$-conjugacy classes of involutions of $\Orth(\q,k)$, where $k$ is a field of characteristic 2, and turn our focus to the fixed point groups of such involutions. The goal of this research is to describe generalized symmetric spaces of the form $G(k)/H(k)$ where $G$ is an algebraic group defined over a field $k$, $H$ is the fixed point group of some automorphism of order $2$ on $G$ and $G(k)$ (respectively $H(k)$) denotes the $k$-rational points of $G$ (resp. $H$).   In particular we want to extend Helminck's  study of $k$-involutions and symmetric $k$-varieties \cite{he00} to include fields of characteristic $2$.  This has been studied for groups of type $\G_2$ and $\mathrm{A}_n$ in \cite{hs18, sc18} and over fields of characteristic not $2$ in \cite{do06, bhw15, he02, bdhw16, hu14, hu15,hu16}.  We also extend the results of Aschbacher and Seitz \cite{as76} who studied similar structures for finite fields of characteristic $2$.    Symmetric spaces were first studied by Gantmacher in \cite{ga39} in order to classify simple real Lie groups.  This was continued in \cite{be57} by Berger who provides a complete classification of symmetric spaces for simple real Lie algebras.   

We refer to \cite{hl04} for notation and vocabulary concerning quadratic forms over fields of characteristic $2$.  Orthogonal and symplectic groups over fields of characteristic $2$ have been studied extensively.   In \cite{ch66} Cheng Hao discusses automorphisms of the orthogonal group over perfect fields of characteristic $2$ when the quadratic form is nondefective.  Pollak discusses orthogonal groups over global fields of characteristic $2$ in the case the quadratic form is nondefective in \cite{po70} and Connors writes about automorphism groups of orthogonal groups over fields of characteristic $2$ in \cite{co73,co74,co75,co76} for a nondegenerate quadratic form.  In \cite{wi78} Wiitala uses the Jordan canonical form to study involutions of the orthogonal group over fields of characteristic 2 in the nondefective case. In \cite{di51} Jean Dieudonn\'e discusses algebraic homogeneous spaces over fields of characteristic $2$.

In the following we establish isomorphism classes for the fixed point groups of involutions of orthogonal groups for quadratic spaces with a totally singular quadratic form and a non-singular quadratic form.  We also provide a characterization of the fixed point group for a general quadratic form defining an orthogonal group over a field of characteristic 2.  When possible, this is done without assuming the form is nondegenerate.

Over fields of characteristic not $2$ the $G(k)$-conjugacy classes of automorphisms of order $2$ are in bijection with $G(k)$-conjugacy classes of fixed point groups of involutions on $G$ as seen in \cite{hw93}. However, in characteristic $2$ there are examples of involutions that are not $G(k)$-conjugate with isomorphic fixed point groups, see Theorem \ref{orth_diag_fixed_pts}, as well as Theorem 5.27 in \cite{hs18}.

In the Preliminaries section we recall definitions, notation, and useful results concerning quadratic forms and bilinear forms over fields of characteristic $2$ and associated algebraic groups.  This section also includes notation from the theory of symmetric $k$-varieties.  Next we recall results concerning involutions of $\Orth(\q,k)$ when $k$ is a field of characteristic $2$ in Section $3$.  This is followed by describing isomorphism classes of fixed point groups for different types of vector spaces.  In Section $4$ we consider the fixed point group for involutions of $\Orth(\q,k)$ where $\q$ is the quadratic form of a totally singular vector space.  Then in Section $5$ we discuss the fixed point group of involutions where $\q$ is the quadratic form of a nonsingular vector space. Finally, Section 6 has an overview of the implications for a general vector space over a field of characteristic $2$.

\section{Preliminaries}
The following definitions can be found in \cite{hl04}.  We also refer to \cite{gr02} often, which uses some of the vocabulary for orthogonal groups differently than \cite{hl04}.  When this happens, we default to the definitions of \cite{hl04}.  

Let $k$ be a field of characteristic 2 and $W$ a vector space defined over $k$.  We call $\q:W \rightarrow k$ a \emph{quadratic form} if it satisfies $\q(aw) = a^2 \q(w)$ for all $a \in k$, $w \in W$. There exists a corresponding symmetric bilinear form $\mathrm{B}: W \times W \rightarrow k$ such that $\q(w+w') = \q(w)+\q(w') + \mathrm{B}(w,w')$ for all $w,w' \in W$.  Over fields of characteristic $2$ nonsingular symmetric bilinear forms are also symplectic.

The pair $(W,\q)$ is called a \emph{quadratic space}. The subspace of $W$ 
\[ \mathrm{rad}(W) = \{ x \in W \ | \ \mathrm{B}(x,w)= 0,\text{ for all } w \in W\}. \]
is called the \emph{radical} of $W$.

Given a quadratic form, there exists a basis of $W$, consisting of $x_i, y_i, g_j$, where $i \in \{1, 2, \hdots, r\}$ and $j \in \{1, 2, \hdots, s\}$ and field elements $a_i, b_i, c_j \in k$ such that 
\[\q(w) = \displaystyle \sum_{i=1}^r (a_i e_i^2 + e_if_i + b_if_i^2) + \sum_{j=1}^s c_j z_j^2\]
when $w = \sum_{i=1}^r (e_ix_i + f_iy_i) + \sum_{j=1}^s z_j g_j$.  We denote this quadratic form by
\begin{equation}
\label{quadratic_signature}
\q = [a_1,b_1] \perp [a_2,b_2] \perp \cdots \perp [a_r,b_r]\perp \langle c_1, c_2, \hdots, c_s  \rangle 
\end{equation}
and so $\rad(W)= \spa \{ g_1, g_2, \hdots, g_s\} $.  The representation of $\q$ given in \ref{quadratic_signature} is called the \emph{quadratic signature} of $\q$. We say that such a quadratic form is of type $(r,s)$.  A nonzero vector $w\in W$ is an \emph{isotropic vector} if $\q(w)=0$, $V$ is an \emph{isotropic vector space} if it contains isotropic elements and \emph{anisotropic} otherwise.  The space $W$ is called \emph{nonsingular} if $s=0$, and \emph{totally singular} if $r=0$. Furthermore, $W$ is called \emph{nondefective} if $s=0$ or $\rad(W)$ is anisotropic.  A \emph{hyperbolic plane} has a quadratic form isometric to the form $[0,0]$ and will be denoted by $\mathbb{H}$.  We will call $\q'$ a \emph{subform} of $\q$ if there exists a form $\mathrm{p}$ such that $\q \cong \q' \perp \mathrm{p}$. In this case, $p$ is called a completion of $\q'$ with respect to $\q$. 

Sometimes we will need to refer to a diagonal bilinear form
\[ \mathrm{B}(w,w') = a_1w_1w'_1 + a_2w_2w'_2 + \cdots + a_lw_lw'_l, \]
which we denote $\langle a_1, a_2, \cdots, a_l \rangle_{\mathrm{B}}$ following \cite{hl04}.  Throughout the paper we will use the calligraphy math font to denote sets of vectors in $W$ such as $\mathcal{U} = \{u_1, u_2, \ldots, u_l\}$ and $\overline{\mathcal{U}} = \spa \mathcal{U}$.   The next result is Proposition 2.4 from \cite{hl04}. 

\begin{prop}\label{hofflaghdecomp}
Let $\q$ be a quadratic form over $k$.  Then
\[ \q \cong m \times \mathbb{H} \perp \widetilde{\q_r} \perp \widetilde{\q_s} \perp d \times \langle 0 \rangle, \]
with $\widetilde{q_r}$ nonsingular, $\widetilde{\q_s}$ totally singular and $\widetilde{\q_r} \perp \widetilde{q_s}$ anisotropic.  The form $\widetilde{\q_r} \perp \widetilde{\q_s}$ is uniquely determined up to isometry.  In particular $m$ and $d$ are uniquely determined.  
\end{prop}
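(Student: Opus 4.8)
The plan is to separate existence from uniqueness, and for existence to reduce everything to the standard splitting of a hyperbolic plane, adapted to characteristic $2$. First I would isolate the part of the radical on which $\q$ vanishes. Since $\rad(W)$ sits inside the radical of $\mathrm{B}$, for $x,y \in \rad(W)$ we have $\q(x+y) = \q(x) + \q(y)$, so $\q|_{\rad(W)}$ is additive and semilinear for the Frobenius $a \mapsto a^2$; hence $R_0 := \{x \in \rad(W) \st \q(x) = 0\}$ is a subspace. As $R_0$ is orthogonal to all of $W$ and carries the zero form, it splits off as $d \times \bi{0}$ with $d = \dim R_0$, leaving a form $\q_1$ whose radical is anisotropic. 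I would then induct on $\dim W$: if $\q_1$ is anisotropic there is nothing to do, and otherwise I pick $0 \neq v$ with $\q_1(v) = 0$. Since the radical of $\q_1$ is anisotropic, $v \notin \rad(\q_1)$, so there is $w$ with $\mathrm{B}(v,w) = 1$; replacing $w$ by $w + \q_1(w) v$ forces $\q_1(w) = 0$ while keeping $\mathrm{B}(v,w) = 1$ (as $\mathrm{B}$ is alternating), so $\spa\{v,w\} \cong \mathbb{H}$ is a nonsingular, hence orthogonally complemented, hyperbolic plane. Splitting it off does not change the anisotropic radical, so induction gives $\q_1 \cong m \times \mathbb{H} \perp \q_{an}$ with $\q_{an}$ anisotropic; splitting $\q_{an}$ along its own totally singular radical yields $\q_{an} \cong \widetilde{\q_r} \perp \widetilde{\q_s}$ with $\widetilde{\q_r}$ nonsingular and $\widetilde{\q_s}$ totally singular.

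For uniqueness I would characterize each invariant intrinsically. The radical $\rad(W)$ is canonical, and in the displayed decomposition it equals $\widetilde{\q_s} \perp d \times \bi{0}$ because the nonsingular summands $m \times \mathbb{H}$ and $\widetilde{\q_r}$ have trivial radical; consequently $R_0 = \rad(W) \cap \{\q = 0\}$ is exactly the $d \times \bi{0}$ summand, so $d = \dim R_0$ is determined and $\widetilde{\q_s} \cong \rad(W)/R_0$, with its induced additive form, is determined up to isometry. For $m$ I would use that a subspace on which $\q$ vanishes is automatically totally isotropic for $\mathrm{B}$, and that such a subspace meets each hyperbolic plane in dimension at most $1$ and meets the anisotropic part trivially; the maximal dimension of a totally isotropic subspace is therefore $m + d$. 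This maximum is an intrinsic invariant, and since $d$ is already known it pins down $m$ (here one reduces the bound on the nonsingular part to the Witt index of a nonsingular quadratic form, which is the classical Witt theory valid in characteristic $2$).

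It remains to see that $\widetilde{\q_r} \perp \widetilde{\q_s} = \q_{an}$ is determined up to isometry, and this is where the main obstacle lies. Given two decompositions of $\q$, the totally singular pieces $\widetilde{\q_s}$ and $d \times \bi{0}$ agree by the intrinsic radical computation above, and the hyperbolic multiplicities agree since $m$ is intrinsic; cancelling the nonsingular form $m \times \mathbb{H}$ then leaves isometric anisotropic cores. The delicate point is precisely this cancellation: in characteristic $2$ Witt cancellation holds when the summand removed is nonsingular, which is exactly the case for $m \times \mathbb{H}$, but it fails in general for totally singular (quasilinear) forms. This is why the totally singular data must be extracted through the canonical radical rather than by naive cancellation, and it also explains why the statement only asserts that the combined form $\widetilde{\q_r} \perp \widetilde{\q_s}$ is well defined up to isometry and not its two summands separately. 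I expect the bookkeeping around this cancellation, together with verifying that the maximal totally isotropic dimension is exactly $m+d$, to be the most technical part of the argument.
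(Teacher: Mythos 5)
The paper itself contains no proof of this statement: it is quoted as Proposition 2.4 of \cite{hl04}, so your proposal has to be measured against the argument in that source rather than against anything in the paper. Your existence half is correct and is essentially the standard one: $R_0 = \{x \in \rad(W) \st \q(x)=0\}$ is a subspace because $\q$ is additive on $\rad(W)$, splitting it off leaves a form whose radical is anisotropic, hyperbolic planes are then split off inductively (the replacement $w \mapsto w + \q(w)v$ is the right normalization, and splitting off a nonsingular plane does not change the radical), and the anisotropic remainder decomposes along its own radical into a nonsingular and a totally singular part. Two of your three uniqueness claims are also sound, and in fact sharper than the statement: $d \times \bi{0}$ is the canonical subspace $R_0$, $\widetilde{\q_s}$ is canonically $\rad(W)/R_0$ with its induced form, and once $m$ is known, the anisotropic cores of two decompositions are identified by first passing to the quotient by $R_0$ (any isometry preserves $R_0$, hence descends) and then cancelling $m \times \mathbb{H}$, which is legitimate because Witt cancellation in characteristic $2$ does hold for nonsingular summands.

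The gap is in your intrinsic characterization of $m$. The inference ``a $\q$-vanishing subspace meets each hyperbolic plane in dimension at most $1$ and meets the anisotropic part trivially, therefore the maximal such subspace has dimension $m+d$'' is invalid: a subspace of an orthogonal sum is not controlled by its intersections with the summands, since totally singular subspaces can sit diagonally across the nonsingular part and the anisotropic radical. Concretely, in $\q \cong \mathbb{H} \perp \bi{a}$ with basis $u,v,g$, where $\q(u)=\q(v)=0$, $\mathrm{B}(u,v)=1$, and $\q(g)=a \neq 0$, the line spanned by $au+v+g$ is totally singular yet meets both $\spa\{u,v\}$ and $\spa\{g\}$ trivially; so intersections with summands say nothing about the dimension of $T$. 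The bound you want is in fact true, but its proof is an induction on $m$: take $0 \neq t \in T$, note $T \subseteq t^{\perp}$ since $\mathrm{B}$ vanishes on $T$, split a hyperbolic plane $P$ through $t$, check that the image of $T$ in $t^{\perp}/\spa\{t\} \cong P^{\perp}$ is again $\q$-vanishing, and identify $P^{\perp} \cong (m-1)\times \mathbb{H} \perp \widetilde{\q_r} \perp \widetilde{\q_s}$ by nonsingular Witt cancellation. Since this already requires the full cancellation machinery, the detour through maximal totally singular subspaces buys you nothing: it is simpler (and is in substance what \cite{hl04} does) to take two decompositions with, say, $m \leq m'$, quotient by $R_0$, cancel $m \times \mathbb{H}$, and conclude from $\widetilde{\q_r} \perp \widetilde{\q_s} \cong (m'-m)\times\mathbb{H} \perp \widetilde{\q_r}' \perp \widetilde{\q_s}'$ together with the anisotropy of the left-hand side that $m=m'$; the same isometry then gives the uniqueness of the anisotropic part in one stroke.
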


We call $m$ the \emph{Witt index} and $d$ the \emph{defect} of $\q$.  If 
\[ \q \cong m \times \mathbb{H} \perp \widetilde{\q_r} \perp  d \times \langle 0 \rangle \perp \widetilde{\q_s}, \]
with respect to the basis 
\[ \{ x_1,y_1, \ldots x_m,y_m, \ldots , x_r,y_r, g_1, \ldots g_d, g_{d+1}, \ldots , g_{s} \}, \]
we will call 
\[ \mathrm{def}(W) = \mathrm{span}\{g_1,\ldots g_d \}, \]
the \emph{defect} of $W$.  If $\mathcal{U}$ is a basis for a subspace $\overline{\mathcal{U}}$ of $W$, we will refer to the restriction of $\q$ to $\overline{\mathcal{U}}$ with respect to this basis by $\q_{\mathcal{U}}$.

Let $G$ be an algebraic group, then an automorphism $\theta: G \rightarrow G$ is an \emph{involution} if $\theta^2 = \id$, $\theta \neq \id$. The automorphism $\theta$ is a $k$-\emph{involution} if in addition $\theta(G(k)) = G(k)$, where $G(k)$ denotes the $k$-rational points of $G$.  We define the {\em fixed point group} of $\theta$ in $G(k)$ by
\[ G(k)^{\theta} = \{ \gamma \in G(k) \ | \ \theta \gamma \theta^{-1} = \gamma \}, \]
this is often denoted $H(k)$ or $H_k$ in the literature when there is no ambiguity with respect to $\theta$. Let $\mathcal{I}_{\tau}$ denote conjugation by $\tau$.

We often consider groups that leave a bilinear form or a quadratic form invariant.  If $\mathrm{B}$ is a bilinear form on a nonsingular vector space $W$ we will denote the {\em symplectic group} of $(W,\q)$ over a field $k$ by
\[ \Sp(\mathrm{B},k) = \{ \varphi \in \GL(W) \ | \ \mathrm{B}(\varphi(w), \varphi(w')) = \mathrm{B}(w,w') \text{ for } w,w' \in W \}. \]
The classification of involutions for $\Sp(\mathrm{B},k)$ for a field $k$ such that $\mathrm{char}(k) \neq 2$ has been studied in \cite{bhw15}.  For any quadratic space $W$ over a field $k$ we will denote the {\em orthogonal group} of $(W,\q)$ by
\[ \Orth(\q,k) = \{  \varphi \in \GL(W) \ | \ \q(\varphi(w)) = \q(w) \text{ for all } w \in W \}. \]
When $W$ is nonsingluar we have $\Orth(\q,k) \subset \Sp(\mathrm{B},k)$ if $\mathrm{B}$ is the bilinear form associated with $\q$.

We will need to make use of some facts about quadratic spaces stated in the following lemmas.  The first statement outlines some standard isometries for quadratic forms over a field of characteristic $2$, and the second allows us to express $\q$ using a different completion of the nonsingular space.  These results and more like them appear in \cite{hl04}.

\begin{lemma}
Let $\q$ be a quadratic form on a vector space $W$ and suppose $\alpha \in k$.  Then the following are equivalent representations of $q$ on $W$:
\begin{enumerate}
\item $[a,a'] = [a,a+a'+1] = [a',a] = [ \alpha^2 a, \alpha^{-2} a'] $
\item $[a,a'] \perp [b,b'] = [a+b,a'] \perp [b,a'+b'] = [b,b'] \perp [a,a']$
 \end{enumerate}
\end{lemma}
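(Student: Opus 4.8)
The plan is to read each symbol $[a,a']$ concretely as the binary quadratic form carried by a basis $\{x,y\}$ with $\q(x)=a$, $\q(y)=a'$ and $\mathrm{B}(x,y)=1$; an orthogonal sum $[a,a']\perp[b,b']$ then lives on $\{x_1,y_1,x_2,y_2\}$ with the analogous values on each block and all cross pairings zero. An equality of signatures is exactly an isometry, so to establish each one I intend to write down an explicit linear substitution, check that its matrix is invertible over $k$, and then recompute $\q$ on the new basis vectors together with the pairing $\mathrm{B}$ between them; if these match the claimed right-hand signature we are done. Throughout I will use the single characteristic-$2$ fact that $\mathrm{B}$ is alternating, that is $\mathrm{B}(w,w)=0$ for every $w$, which follows from $0=\q(w+w)=2\q(w)+\mathrm{B}(w,w)$.

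For part (1) the three substitutions are immediate. Sending $x\mapsto x$, $y\mapsto x+y$ gives $\q(x+y)=a+a'+1$ and $\mathrm{B}(x,x+y)=\mathrm{B}(x,y)=1$, which realizes $[a,a+a'+1]$; swapping $x\leftrightarrow y$ realizes $[a',a]$; and scaling $x\mapsto\alpha x$, $y\mapsto\alpha^{-1}y$ (legitimate since the appearance of $\alpha^{-2}$ forces $\alpha\in k^{\times}$) gives $\q(\alpha x)=\alpha^{2}a$, $\q(\alpha^{-1}y)=\alpha^{-2}a'$ and $\mathrm{B}(\alpha x,\alpha^{-1}y)=1$, realizing $[\alpha^{2}a,\alpha^{-2}a']$. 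Each substitution matrix is visibly invertible, so all three are isometries.

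For part (2) the equality $[a,a']\perp[b,b']=[b,b']\perp[a,a']$ is just the interchange of the two orthogonal summands and needs no computation. The content is the middle equality, where the one mildly delicate point is choosing a substitution that both produces the prescribed diagonal values and keeps the two new blocks orthogonal. I propose
\[
x_1'=x_1+x_2,\qquad y_1'=y_1,\qquad x_2'=x_2,\qquad y_2'=y_1+y_2 .
\]
Then $\q(x_1')=a+b$, $\q(y_1')=a'$ and $\mathrm{B}(x_1',y_1')=1$ give the first block $[a+b,a']$, while $\q(x_2')=b$, $\q(y_2')=a'+b'$ and $\mathrm{B}(x_2',y_2')=1$ give the second block $[b,a'+b']$. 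The remaining step is to confirm that the two new planes are orthogonal, namely $\mathrm{B}(x_1',x_2')=\mathrm{B}(y_1',y_2')=\mathrm{B}(x_1',y_2')=\mathrm{B}(y_1',x_2')=0$; here the alternating property kills the diagonal term $\mathrm{B}(x_2,x_2)$, while the characteristic-$2$ cancellation $\mathrm{B}(x_1,y_1)+\mathrm{B}(x_2,y_2)=1+1=0$ handles the mixed term. Since $\{x_1',y_1',x_2',y_2'\}$ is again a basis, this substitution is the desired isometry.

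I expect no serious obstacle: the whole lemma reduces to bookkeeping with $\mathrm{B}$, and the only place requiring a moment's thought is finding the rank-$4$ substitution above so that orthogonality of the two recombined planes survives, which is precisely where the characteristic-$2$ identity $1+1=0$ is used.
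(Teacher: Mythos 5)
Your proof is correct. The paper does not actually prove this lemma---it is stated as a list of standard isometries with a citation to \cite{hl04}---so there is no internal proof to compare against; your argument is the standard one and is complete: each claimed equality is realized by an explicit invertible substitution ($y\mapsto x+y$ for $[a,a+a'+1]$, the swap, the scaling $x\mapsto\alpha x$, $y\mapsto\alpha^{-1}y$ with the correct observation that $\alpha\in k^{\times}$ is forced, and the simultaneous recombination $x_1'=x_1+x_2$, $y_2'=y_1+y_2$ for part (2)), and the only nontrivial checks---that $\mathrm{B}$ is alternating in characteristic $2$ and that the cross pairing $\mathrm{B}(x_1',y_2')=1+1=0$ keeps the two new planes orthogonal---are exactly the ones you carry out.
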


\begin{lemma} Let $a_i, b_i, c_i \in k$ for $1 \leq i \leq n$.  Suppose $\{a_1, ... , a_n \}$ and $\{b_1, ... , b_n \}$ span the same vector space over $k^2$ and $\q = [a_1,c_1] \perp \hdots \perp [a_n , c_n]$.  Then there exist $c_i^{\prime} \in k$, $1 \leq i \leq n$, such that $\q = [b_1,c_1^{\prime}] \perp \hdots \perp [b_n , c_n^{\prime}]$.
\end{lemma}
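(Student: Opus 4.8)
The plan is to read the first coordinates $a_1,\dots,a_n$ as the values $\q(x_1),\dots,\q(x_n)$ on a symplectic basis and to show that the two families of isometries in the previous lemma realize all $\GL_n(k^2)$-transformations of this coordinate vector. Fix a basis $x_1,y_1,\dots,x_n,y_n$ adapted to $\q=[a_1,c_1]\perp\cdots\perp[a_n,c_n]$, so that $\q(x_i)=a_i$, $\q(y_i)=c_i$, $\B(x_i,y_i)=1$, and all remaining pairings vanish. Part (1) of the previous lemma, $[a,a']=[\alpha^2 a,\alpha^{-2}a']$, is effected by $x_i\mapsto\alpha x_i$, $y_i\mapsto\alpha^{-1}y_i$ and rescales the single entry $a_i$ by the square $\alpha^2$. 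Part (2), $[a,a']\perp[b,b']=[a+b,a']\perp[b,a'+b']$, is effected by $x_i\mapsto x_i+x_j$ together with the compensating change $y_j\mapsto y_j+y_i$, and replaces $a_i$ by $a_i+a_j$ while fixing the other first coordinates; its third form permutes two blocks. In each case the form keeps the shape $[\,\cdot\,,\,\cdot\,]\perp\cdots\perp[\,\cdot\,,\,\cdot\,]$, only the entries changing.

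Next I would check that these moves generate the action of $\GL_n(k^2)$ on the tuple $(a_1,\dots,a_n)$, where $M=(m_{ij})\in\GL_n(k^2)$ sends it to $\big(\sum_j m_{ij}a_j\big)_i$. Sandwiching the addition move between a rescaling of block $j$ by $\alpha^2$ and its inverse yields the transvection $a_i\mapsto a_i+\lambda a_j$ for arbitrary $\lambda=\alpha^2\in k^2$; together with the diagonal rescalings $a_i\mapsto\lambda a_i$ ($\lambda\in(k^2)^\times$) and the block permutations, these are precisely the elementary generators of $\GL_n(k^2)$. Hence for any $M\in\GL_n(k^2)$ there is a chain of isometries from the previous lemma carrying $\q$ to a form $[a_1',c_1']\perp\cdots\perp[a_n',c_n']$ whose first-coordinate vector is $M(a_1,\dots,a_n)^{\mathsf T}$.

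It then remains to produce some $M\in\GL_n(k^2)$ with $M(a_1,\dots,a_n)^{\mathsf T}=(b_1,\dots,b_n)^{\mathsf T}$, and here the hypothesis that $\{a_i\}$ and $\{b_i\}$ span the same $k^2$-subspace $V\subseteq k$ enters. Choosing a finite $k^2$-basis of $V$ and expanding each $a_i$ and $b_i$ in it presents the two tuples as $n\times d$ matrices $A$ and $B$ over $k^2$, with $d=\dim_{k^2}V$; the hypothesis says exactly that $A$ and $B$ have the same row space. Since the reduced row echelon form is a canonical form for the left $\GL_n(k^2)$-action, there are $P,Q\in\GL_n(k^2)$ with $PA=QB$, so $M=Q^{-1}P$ satisfies $MA=B$, i.e. $M(a_1,\dots,a_n)^{\mathsf T}=(b_1,\dots,b_n)^{\mathsf T}$. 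Applying the corresponding chain of isometries produces $\q=[b_1,c_1']\perp\cdots\perp[b_n,c_n']$ for suitable $c_i'\in k$, as claimed.

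I expect the main obstacle to be the bookkeeping in the second paragraph: confirming that the two isometry families compose to every elementary generator of $\GL_n(k^2)$—in particular that the rescale–add–rescale sequence realizing $a_i\mapsto a_i+\lambda a_j$ returns the auxiliary block $j$ to its original first coordinate—and that every move preserves the orthogonality of the decomposition so the form never leaves the standard $\perp$-shape. By contrast the final step is routine once the spanning hypothesis is reinterpreted as equality of row spaces; the only mild subtlety, that the tuples may be linearly dependent, is absorbed by the uniqueness of the reduced row echelon form.
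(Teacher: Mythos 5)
Your proof is correct, but there is nothing in the paper to compare it against line by line: the paper states this lemma (and the preceding one on standard isometries) without proof, attributing both to Hoffmann--Laghribi \cite{hl04}. Taken on its own merits, your argument is a valid, essentially self-contained derivation from the first lemma, and the three steps all check out: (i) the relations $[a,a']=[\alpha^2 a,\alpha^{-2}a']$ and $[a,a']\perp[b,b']=[a+b,a']\perp[b,a'+b']$ really do act on the tuple of first entries $(a_1,\dots,a_n)$ as diagonal scalings by $(k^2)^\times$ and as the addition $a_i\mapsto a_i+a_j$, and your rescale--add--rescale sandwich yields $a_i\mapsto a_i+\lambda a_j$ for every $\lambda\in k^2$ while restoring block $j$ (only the irrelevant second entries are disturbed); (ii) since $k^2$ is a subfield of $k$ in characteristic $2$, these operations are exactly the elementary generators of $\GL_n(k^2)$, so every $M\in\GL_n(k^2)$ is realized by a chain of isometries preserving the shape $[\,\cdot\,,\cdot\,]\perp\cdots\perp[\,\cdot\,,\cdot\,]$; (iii) the equal-span hypothesis does produce $M\in\GL_n(k^2)$ with $M(a_1,\dots,a_n)^T=(b_1,\dots,b_n)^T$ --- in fact the situation is even simpler than your general same-row-space reduction, since the rows of both coordinate matrices span all of $(k^2)^d$, so both have the same reduced row echelon form with $\id$ on top and zero rows below. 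The presumable source proof in \cite{hl04} runs by induction on these same two relations; what your version buys is an explicit reduction to linear algebra over $k^2$, making visible precisely which isometries are composed, at the cost of the elementary-generation bookkeeping you yourself flagged (which, as verified above, goes through, including the degenerate cases $n=1$ and $a_i=0$).
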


For a map $\theta \in \GL(W)$ we define the \emph{residual space} of $\theta$ to be the image of $\theta + \id$ and the \emph{residue} of $\theta$ to be the dimension of the residual space.  

\section{Summary of involution results}

Here we summarize the results from \cite{hhs21} that we need to talk about the classification of the fixed points groups of each $\Orth(\q,k)$-conjugacy class of an involution.   

If $W$ is a nonsingular vector space with respect to $\q$ we can define a \emph{transvection} on $W$ induced by $w \in W$ and $a \in k$
\[ \tau_{w,a}(z) = z + a \mathrm{B}(w,z)w, \]
and an \emph{orthogonal transvection} is of the form $\tau_{w, \frac{1}{\q(w)}}$ for $w \in W$ with $\q(w) \neq 0$.  In this case we denote the transvection by $\tau_w$.

If $\mathcal{U}$ is the set of mutually orthogonal vectors inducing a product of orthogonal transvections $\tau$ we call $\mathcal{U}$ the \emph{inducing set} for 
\[ \tau = \tau_{\mathcal{U}} = \tau_{u_l} \cdots \tau_{u_2}\tau_{u_1}. \]
Let $\mathcal{U} = \{u_1, \ldots, u_l\}$ and $\mathcal{X} = \{ x_1, \ldots, x_l\}$ be sets of mutually orthogonal vectors in a nonsingular subspace of $W$.  In what follows we will factor involutions into a product of involutions of a given type (orthogonal transvection, basic null, basic radical), such a factorization is called \emph{reduced} if it uses the minimum number of factors.  We refer to the number of factors in a reduced factorization as the \emph{length} of an involution.

\begin{lemma}\label{equal_tr}
Two orthogonal involutions given by reduced products of orthogonal transvections are equal, $\tau_{u_l} \cdots \tau_{u_2} \tau_{u_1} = \tau_{x_l} \cdots \tau_{x_2} \tau_{x_1}$, if and only if $\overline{\mathcal{U}} = \overline{\mathcal{X}}$.
and
\[ \left\langle \q(u_1), \q(u_2), \ldots, \q(u_l) \right\rangle_{\mathrm{B}} \cong  \left\langle \q(x_1), \q(x_2), \ldots, \q(x_l) \right\rangle_{\mathrm{B}}. \]
\end{lemma}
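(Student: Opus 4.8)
The plan is to pin $\tau_{\mathcal{U}}$ down by an explicit formula and then read off two invariants from it: its residual space and a symmetric bilinear form carried by that space.

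First I would check that the factors commute and compute the product. Because the $u_i$ are mutually orthogonal and $\mathrm{B}(w,w)=0$ for every $w$ in characteristic $2$, a short calculation gives $\tau_{u_i}\tau_{u_j}=\tau_{u_j}\tau_{u_i}$ and hence
\[
\tau_{\mathcal{U}}(z)=z+\sum_{i=1}^{l}\frac{\mathrm{B}(u_i,z)}{\q(u_i)}\,u_i .
\]
Thus $(\tau_{\mathcal{U}}+\id)(W)\subseteq\overline{\mathcal{U}}$. Since the factorization is reduced, its length $l$ equals the residue, and comparing this with the formula forces the $u_i$ to be linearly independent; nondegeneracy of $\mathrm{B}$ then makes the functionals $\mathrm{B}(u_i,\cdot)$ independent, so in fact $(\tau_{\mathcal{U}}+\id)(W)=\overline{\mathcal{U}}$ with residue exactly $l=\dim\overline{\mathcal{U}}$.

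For the forward implication, assume $\tau_{\mathcal{U}}=\tau_{\mathcal{X}}$. Equality of residual spaces gives $\overline{\mathcal{U}}=\overline{\mathcal{X}}=:V$ at once. To recover the form condition I would show that $\tau_{\mathcal{U}}$ determines the diagonal form intrinsically. Put $\Phi=\tau_{\mathcal{U}}+\id$; then $\ker\Phi=V^{\perp}\supseteq V$ (as $V$ is totally isotropic for $\mathrm{B}$), and for $v,v'\in V$ I set $\gamma(v,v')=\mathrm{B}(z,v')$ with any $z$ satisfying $\Phi(z)=v$. This is well defined because two preimages differ by an element of $V^{\perp}$, which is $\mathrm{B}$-orthogonal to $v'\in V$; it is symmetric by the explicit formula; and choosing $z_i$ with $\mathrm{B}(u_m,z_i)=\delta_{mi}\q(u_i)$ (possible by nondegeneracy) yields $\gamma(u_i,u_j)=\delta_{ij}\q(u_i)$, i.e. $\gamma=\langle\q(u_1),\dots,\q(u_l)\rangle_{\mathrm{B}}$ read off in the basis $\{u_i\}$. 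Since $\gamma$ is manufactured purely from $\tau_{\mathcal{U}}$ and $\mathrm{B}$, the set $\mathcal{X}$ produces the same bilinear form on $V$; in particular the two diagonal forms are isometric.

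The converse is the step I expect to be the main obstacle. From the formula, $\tau_{\mathcal{U}}=\tau_{\mathcal{X}}$ holds as soon as $\gamma_{\mathcal{U}}$ and $\gamma_{\mathcal{X}}$ agree as bilinear forms on the common space $V$, since each $\tau$ is the map $z\mapsto z+\Phi_{\gamma}(z)$ and $\Phi_{\gamma}$ depends only on $\gamma$ and the restriction of $\mathrm{B}$ to $V\times W$. The delicate point is therefore to turn the given isometry $\langle\q(u_i)\rangle_{\mathrm{B}}\cong\langle\q(x_j)\rangle_{\mathrm{B}}$ into the stronger statement that $\{u_i\}$ and $\{x_j\}$ are orthogonal bases of one and the same form on $V$. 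Here the common constraint $\gamma_{\mathcal{U}}(v,v)=\q(v)=\gamma_{\mathcal{X}}(v,v)$ for $v\in V$ shows the two forms can differ only by an alternating part, and I would aim to eliminate that freedom using the characterization of equivalent diagonal forms over a field of characteristic $2$ recorded in the lemmas above, being careful that the identification be compatible with the fixed residual space $V$ and with the ambient form $\mathrm{B}$ rather than merely abstract. Reconciling the isometry with this rigidity on $V$ is, I expect, the crux of the argument and the part demanding the most care.
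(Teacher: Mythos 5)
You should know at the outset that this paper never proves Lemma \ref{equal_tr}: Section 3 is explicitly a summary of results imported from \cite{hhs21}, so your proposal can only be measured against the statement itself, not against an in-paper argument. With that said, your forward direction is essentially correct and cleanly organized: the commuting-product formula, the identification of $\overline{\mathcal{U}}$ with the image of $\tau+\id$, and the intrinsic symmetric form $\gamma(v,v')=\mathrm{B}(z,v')$ (where $(\tau+\id)(z)=v$) do recover both invariants from the map $\tau$ alone. The one soft spot is your claim that reduced length equals residue: that is precisely the paper's diagonal/hyperbolic dichotomy and is not free. For mutually orthogonal inducing sets it can be proved --- a dependency $\sum_i c_iu_i=0$ gives $\sum_i c_i^2\q(u_i)=\q(0)=0$, which forces the residue down to at most $l-2$, and Wiitala's factorization theorem \cite{wi78} then shows such a length-$l$ product is not reduced --- but that argument has to be made rather than asserted.

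The converse, which you flagged as the crux and left open, is a genuine gap, and it is worse than you suspected: it cannot be closed, because the ``if'' direction as literally stated is false. Your own framework shows what is really needed: since $\tau_{\mathcal{U}}$ is determined by the form $\gamma_{\mathcal{U}}$ on $\overline{\mathcal{U}}$ that is diagonal in the basis $\mathcal{U}$ with entries $\q(u_i)$, the equality $\tau_{\mathcal{U}}=\tau_{\mathcal{X}}$ is equivalent to $\overline{\mathcal{U}}=\overline{\mathcal{X}}$ together with $\gamma_{\mathcal{U}}=\gamma_{\mathcal{X}}$ as forms on that common subspace, and this is strictly stronger than abstract isometry of the two diagonal forms. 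Concretely, take $\q=[1,0]\perp[s^2,0]$ on $\spa\{u_1,v_1,u_2,v_2\}$ with $s\neq 0,1$, and set $x_1=(1+s)u_1+u_2$, $x_2=su_1$. Then $\mathcal{X}=\{x_1,x_2\}$ is mutually orthogonal and anisotropic ($\q(x_1)=1$, $\q(x_2)=s^2$), $\overline{\mathcal{X}}=\overline{\mathcal{U}}$, both products are reduced, and the two diagonal forms are literally the same form $\langle 1,s^2\rangle_{\mathrm{B}}$; nevertheless
\[ \tau_{u_2}\tau_{u_1}(v_1)=v_1+u_1 \quad\text{while}\quad \tau_{x_2}\tau_{x_1}(v_1)=v_1+s^2u_1+(1+s)u_2, \]
so the two involutions are distinct (they are $\Orth(\q,k)$-conjugate, consistent with Theorem \ref{conj_transv}, but not equal). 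Thus the alternating-part freedom you identified is real and cannot be eliminated from the stated hypotheses; the statement becomes true only when the isometry of diagonal forms is required to be implemented by the identity map of $\overline{\mathcal{U}}=\overline{\mathcal{X}}$, i.e.\ when one assumes $\gamma_{\mathcal{U}}=\gamma_{\mathcal{X}}$, and under that hypothesis your $\Phi_{\gamma}$ observation finishes the proof in one line. So your proposal is incomplete exactly where you feared, and no argument from the hypotheses as written can complete it.
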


The classification of $\Orth(\q,k)$-conjugacy classes of involutions given by a product of transvections in a nonsingular space can then be described.
\begin{thm} \label{conj_transv}
Let $\tau_{u_l} \cdots \tau_{u_2} \tau_{u_1}$ and $\tau_{x_l} \cdots \tau_{x_2} \tau_{x_1}$ be orthogonal involution written as a product of transvections on $W$ such that $\phi \in \Orth(\q,k)$.  Then 
\[ \phi \tau_{u_l} \cdots \tau_{u_2} \tau_{u_1}\phi^{-1} = \tau_{x_l} \cdots \tau_{x_2} \tau_{x_1} \]
if and only if 
\[ \left\langle \q(u_1), \q(u_2), \ldots, \q(u_l) \right\rangle_{\mathrm{B}} \cong  \left\langle \q(x_1), \q(x_2), \ldots, \q(x_l) \right\rangle_{\mathrm{B}}, \]
and $\dim(\overline{\mathcal{U}}) = \dim(\overline{\mathcal{X}})$.
\end{thm}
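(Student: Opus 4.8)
The plan is to prove both directions by leveraging Lemma \ref{equal_tr}, which already characterizes \emph{equality} of two reduced products of orthogonal transvections in terms of the spanned subspace and the isometry class of the associated diagonal bilinear form. The key observation is that conjugation by $\phi \in \Orth(\q,k)$ should interact cleanly with the transvection formula: I expect that $\phi \tau_{w} \phi^{-1} = \tau_{\phi(w)}$, since for any $z$ we have $\phi \tau_w \phi^{-1}(z) = \phi(\phi^{-1}(z) + \frac{1}{\q(w)}\mathrm{B}(w, \phi^{-1}(z)) w)$, and because $\phi$ is an isometry it preserves $\mathrm{B}$ and $\q$, so this collapses to $z + \frac{1}{\q(\phi(w))}\mathrm{B}(\phi(w), z)\phi(w) = \tau_{\phi(w)}(z)$. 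The first step, then, is to establish this conjugation identity carefully, and extend it multiplicatively to get $\phi \tau_{\mathcal{U}} \phi^{-1} = \tau_{\phi(\mathcal{U})}$, where $\phi(\mathcal{U}) = \{\phi(u_1), \ldots, \phi(u_l)\}$.

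For the forward direction, I would assume $\phi \tau_{u_l}\cdots\tau_{u_1}\phi^{-1} = \tau_{x_l}\cdots\tau_{x_1}$. By the conjugation identity this equals $\tau_{\phi(u_l)}\cdots\tau_{\phi(u_1)}$, so we have an equality of two reduced products of orthogonal transvections, with inducing sets $\phi(\mathcal{U})$ and $\mathcal{X}$. Applying Lemma \ref{equal_tr} yields $\overline{\phi(\mathcal{U})} = \overline{\mathcal{X}}$ and an isometry of the diagonal forms $\langle \q(\phi(u_1)), \ldots, \q(\phi(u_l))\rangle_{\mathrm{B}} \cong \langle \q(x_1), \ldots, \q(x_l)\rangle_{\mathrm{B}}$. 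Since $\phi$ is an isometry, $\q(\phi(u_i)) = \q(u_i)$ for each $i$, which immediately gives the required isometry $\langle \q(u_1), \ldots, \q(u_l)\rangle_{\mathrm{B}} \cong \langle \q(x_1), \ldots, \q(x_l)\rangle_{\mathrm{B}}$. For the dimension condition, $\phi$ is invertible so $\dim(\overline{\phi(\mathcal{U})}) = \dim(\overline{\mathcal{U}})$, and combined with $\overline{\phi(\mathcal{U})} = \overline{\mathcal{X}}$ this gives $\dim(\overline{\mathcal{U}}) = \dim(\overline{\mathcal{X}})$.

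The converse is where the main work lies. Given the isometry of diagonal forms and the equality of dimensions, I must \emph{construct} a $\phi \in \Orth(\q,k)$ conjugating one product to the other. The natural strategy is to build an isometry of the ambient space $W$ that sends the inducing configuration $\mathcal{U}$ to $\mathcal{X}$ (or to a set with the same span and diagonal form as $\mathcal{X}$), and then invoke the forward computation together with Lemma \ref{equal_tr} to conclude the conjugated product equals $\tau_{x_l}\cdots\tau_{x_1}$. Concretely, the isometry $\langle \q(u_1),\ldots,\q(u_l)\rangle_{\mathrm{B}} \cong \langle \q(x_1),\ldots,\q(x_l)\rangle_{\mathrm{B}}$ supplies an isometry $\psi$ between the nonsingular subspaces $\overline{\mathcal{U}}$ and $\overline{\mathcal{X}}$ carrying the $u_i$-configuration to one inducing the same product as $\mathcal{X}$; the equality of dimensions ensures the orthogonal complements $\overline{\mathcal{U}}^\perp$ and $\overline{\mathcal{X}}^\perp$ have matching dimension and isometry type, allowing $\psi$ to be extended to a global isometry $\phi$ of $W$ by Witt's extension theorem.

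The main obstacle will be the extension step in characteristic $2$: the subtleties of defective and singular behavior mean that Witt-type extension and cancellation results require care, and one must verify that an isometry of the nonsingular subspaces $\overline{\mathcal{U}}$ and $\overline{\mathcal{X}}$ genuinely extends to the whole quadratic space $W$ given only the matching of dimensions. I would handle this by restricting attention to the nonsingular setting in which the transvections live, using the equality of dimensions to match up complements, and appealing to the characteristic-$2$ Witt extension theory from \cite{hl04}; the delicate point is confirming that the diagonal bilinear isometry lifts to an isometry of the \emph{quadratic} forms on the relevant subspaces rather than merely the bilinear forms, which is what makes the $\q(u_i) = \q(x_i)$ bookkeeping essential.
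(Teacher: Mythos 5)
A preliminary remark: the paper you were given never proves this theorem --- Section 3 is explicitly a summary of results imported from \cite{hhs21} --- so your proposal has to be judged on its own merits rather than against an in-paper argument. Your forward direction is correct and complete: the identity $\phi\tau_w\phi^{-1} = \tau_{\phi(w)}$ (valid because $\phi$ preserves both $\q$ and $\mathrm{B}$), its extension to products, Lemma \ref{equal_tr} applied to the reduced products $\tau_{\phi(\mathcal{U})}$ and $\tau_{\mathcal{X}}$, and the observation $\q(\phi(u_i)) = \q(u_i)$ together yield exactly the two stated conditions.

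The converse, however, has two genuine gaps. First, you repeatedly call $\overline{\mathcal{U}}$ and $\overline{\mathcal{X}}$ ``nonsingular subspaces,'' but they are the opposite: in characteristic $2$ the polar form $\mathrm{B}$ is alternating, and the inducing vectors are mutually orthogonal, so $\mathrm{B}$ vanishes identically on $\overline{\mathcal{U}}$ and $\overline{\mathcal{X}}$ --- they are \emph{totally singular} subspaces spanned by anisotropic vectors (the paper states this explicitly in Section 5). Consequently your plan to ``match up the orthogonal complements'' cannot be carried out as described, since $\overline{\mathcal{U}} \subseteq \overline{\mathcal{U}}^{\perp}$; what is really needed is the characteristic-$2$ Witt extension theorem for an isometry between totally singular subspaces of a nonsingular ambient space, which does hold for nonsingular $W$ but is precisely the delicate statement you must quote or prove, not a complement-matching argument. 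Second, and more seriously, your construction tacitly assumes the $u_i$ and $x_i$ are linearly independent, i.e. $\dim(\overline{\mathcal{U}}) = \dim(\overline{\mathcal{X}}) = l$. The remark immediately following the theorem explains that a reduced product of $l$ orthogonal transvections may instead satisfy $\dim(\overline{\mathcal{U}}) = l-1$ (a hyperbolic involution), and the hypothesis $\dim(\overline{\mathcal{U}}) = \dim(\overline{\mathcal{X}})$ exists exactly to separate the diagonal and hyperbolic cases. In the hyperbolic case your $\psi$, built from an isometry of the $l$-dimensional diagonal bilinear forms, is not a map between the $(l-1)$-dimensional spans at all, so the extension step has nothing to extend; meanwhile you use the dimension hypothesis only to match dimensions of complements, which would be automatic --- a sign that this case is missing from your argument. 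Finally, a small repairable point: your closing reference to ``$\q(u_i) = \q(x_i)$ bookkeeping'' overstates the hypothesis, which gives only congruence of the diagonal Gram matrices, not equality of norms; fortunately a congruence $M^{T}\diag(\q(u_1),\ldots,\q(u_l))M = \diag(\q(x_1),\ldots,\q(x_l))$ is exactly what transfers the totally singular quadratic form $\sum_i \q(u_i)z_i^2$ to $\sum_i \q(x_i)z_i^2$, which is what the extension step actually requires in the diagonal case.
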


The last stipulation, $\dim(\overline{\mathcal{U}}) = \dim(\overline{\mathcal{X}})$,  distinguishes between elements $\tau$ whose reduced length is $\mathrm{res}(\tau) + 1$, called hyperbolic involutions, and those whose reduced length is $\mathrm{res}(\tau)$, called diagonal. 

There is one other type of nonsingular involution called a null involution.   
\begin{definition} A plane $P = \spa\{ x, y \}$ is hyperbolic (or Artinian) if both of the following are satisfied:
	\begin{enumerate}
		\item $\q(x) = \q(y) = 0$ 
		\item $B(x,y) \neq 0.$
	\end{enumerate}
\end{definition}

If $x, y$ span a hyperbolic plane, we can rescale to assume $B(x,y) =1$.  Proposition 188.2 of \cite{sn89} guarantees that every nonsingular nonzero isotropic vector is contained in a hyperbolic plane.  
\begin{definition}
Let $\eta$ be an involution of $\Orth(\q,k)$ where $(W,\q)$ is a quadratic space, and let $\mathbb{P}$ be the orthogonal sum of two hyperbolic planes.  Then an involution $\eta$ is called a basic null involution in $\mathbb{P}$ if all of the following are satisfied:
	\begin{enumerate}
		\item $\eta$ leaves $\mathbb{P}$ invariant
		\item $\eta$ fixes a 2-dimensional subspace of $\mathbb{P}$ where every vector has norm zero
		\item $\eta \vert_{\mathbb{P}^C} = \id_{\mathbb{P}^C}$, where $\mathbb{P}^C$ is the complement of $\mathbb{P}$ in $W$.
	\end{enumerate}
\end{definition}

A \emph{null involution} is a product of basic null involutions. Note that if $\eta$ is a null involution and $\tau$ is a transvection, then $\tau \eta$ is a product of transvections \cite{sn89}, so we only need to consider the conjugacy of null involutions themselves.

\begin{prop} \label{conj_null}
Two null involutions are $\Orth(\q,k)$-conjugate if and only if they have the same length.
\end{prop}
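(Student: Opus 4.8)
The plan is to prove the two implications separately: the forward direction is a soft conjugacy-invariance argument, and the backward direction is an explicit normal-form construction powered by Witt's extension theorem. For the forward direction I would show that length is a conjugacy invariant. If $\eta = \eta_1 \cdots \eta_\ell$ is a reduced factorization into basic null involutions and $\phi \in \Orth(\q,k)$, then $\phi \eta \phi^{-1} = (\phi \eta_1 \phi^{-1}) \cdots (\phi \eta_\ell \phi^{-1})$, and each conjugate $\phi \eta_i \phi^{-1}$ is again a basic null involution: being an isometry, $\phi$ carries the distinguished $\mathbb{P}$ to another orthogonal sum of two hyperbolic planes, the fixed totally singular $2$-plane to a totally singular $2$-plane, and the complement to the complement. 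Hence $\phi \eta \phi^{-1}$ factors into $\ell$ basic null involutions, so its length is at most $\ell$; applying the same bound to $\phi^{-1}(\phi \eta \phi^{-1})\phi$ gives the reverse inequality, so conjugate null involutions have equal length.

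For the backward direction I would first settle the base case $\ell = 1$. Given a basic null involution $\eta$ on $\mathbb{P}$ fixing the totally singular $2$-plane $N$, I note that $N$ is a \emph{maximal} totally singular subspace of $\mathbb{P}$, so by Witt's theorem I may conjugate to assume $N = \spa\{x_1,x_2\}$ in a standard hyperbolic basis $\{x_1,y_1,x_2,y_2\}$. A direct computation using $\q(\eta(y_i)) = 0$ and $\mathrm{B}(\eta(y_1),\eta(y_2)) = 0$ forces $\eta(y_1) = y_1 + \beta x_2$ and $\eta(y_2) = y_2 + \beta x_1$ for a single scalar $\beta \neq 0$, and the rescaling $x_2 \mapsto \beta x_2$, $y_2 \mapsto \beta^{-1} y_2$ --- an isometry of $\mathbb{P}$ --- normalizes $\beta = 1$. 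Thus every basic null involution is $\Orth(\q,k)$-conjugate to one fixed standard model, which proves the length-$1$ case.

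For general $\ell$ the key structural claim is that a null involution of length $\ell$ is $\Orth(\q,k)$-conjugate to an orthogonal product $\eta_1 \cdots \eta_\ell$ of standard basic null involutions supported on mutually orthogonal copies $\mathbb{P}_1,\ldots,\mathbb{P}_\ell$ of $\mathbb{H} \perp \mathbb{H}$, acting as the identity on $C = (\mathbb{P}_1 \perp \cdots \perp \mathbb{P}_\ell)^\perp$. Granting this, any two length-$\ell$ null involutions $\eta,\eta'$ are conjugated to such standard products; the blocks $\mathbb{P}_i$ agree as quadratic spaces and the base case identifies the standard basic null involution on each, while the complements $C,C'$ are isometric by Witt cancellation, being complements of the isometric subspaces $\mathbb{P}_1 \perp \cdots \perp \mathbb{P}_\ell$ in $W$. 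An isometry of $W$ restricting to the coordinate identifications on the blocks and to any isometry $C \to C'$ on the complement then conjugates one standard product to the other, and composing the three conjugations shows $\eta$ and $\eta'$ are $\Orth(\q,k)$-conjugate.

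The main obstacle is the structural claim that a length-$\ell$ null involution admits this orthogonal normal form with residual space $\im(\eta + \id)$ of dimension exactly $2\ell$. The inequality $\dim \im(\eta+\id) \le 2\ell$ follows from $\im(\eta+\id) \subseteq \sum_i \im(\eta_i+\id)$ together with the identity $\q((\eta+\id)(w)) = \mathrm{B}(\eta(w),w)$, which shows each individual residual $2$-plane is totally singular; but the reverse bound --- that no shorter factorization exists and that the basic null pieces can be chosen pairwise orthogonal --- requires genuinely controlling how these totally singular residual $2$-planes interact, and this is where the real work lies. Once the residual space is shown to be totally singular of the correct dimension with $\eta$ trivial on it and on a complementary summand, the conjugation is a routine application of Witt's extension theorem, since all totally singular subspaces of a fixed dimension are isometric and carry no further invariants --- which is precisely why length is the only obstruction to conjugacy.
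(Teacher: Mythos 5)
Your forward direction is correct (an isometry carries basic null involutions to basic null involutions, so reduced length is a conjugacy invariant), and your $\ell=1$ normalization checks out: with the fixed totally singular plane moved to $\spa\{x_1,x_2\}$, the conditions $\q(\eta(y_i))=0$ and $\mathrm{B}(\eta(y_1),\eta(y_2))=0$ do force $\eta(y_1)=y_1+\beta x_2$, $\eta(y_2)=y_2+\beta x_1$, and rescaling the second hyperbolic pair normalizes $\beta=1$. But the argument as a whole has a genuine gap, and you name it yourself: the structural claim that a null involution of length $\ell$ is conjugate to an orthogonal product of $\ell$ standard basic null involutions on pairwise orthogonal copies of $\mathbb{H}\perp\mathbb{H}$, with residual space totally singular of dimension exactly $2\ell$. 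Your entire backward direction for $\ell>1$ is conditional on this claim (``granting this,'' ``this is where the real work lies''), so what you actually have is the ``only if'' direction, the case $\ell=1$, and a plan --- not a proof of the proposition. The deferred step is not routine. Even the assertion that $\im(\eta+\id)$ is totally singular is not automatic: it sits inside $\sum_i \im(\eta_i+\id)$, a sum of totally singular $2$-planes, but such a sum need not be totally singular, since $\q(u+v)=\mathrm{B}(u,v)$ when $\q(u)=\q(v)=0$; killing these cross terms requires exactly the control over how the planes $\mathbb{P}_i$ interact that you postpone. Likewise, the equality $\dim\im(\eta+\id)=2\ell$ (as opposed to $\le 2\ell$), which is what lets you read the length off the residual space and rules out shorter factorizations, is asserted but never established; without it, two null involutions of different lengths could a priori share a normal form, collapsing the ``only if'' direction you proved into vacuity.

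Two further points for calibration. First, the paper itself states this proposition without proof --- it sits in the summary Section 3 and is quoted from \cite{hhs21} --- so the standard you must meet is a self-contained argument, and the heart of such an argument is precisely the missing lemma: an involution that is a reduced product of $\ell$ basic null involutions has $2\ell$-dimensional residual space on which $\q$ vanishes identically, is trivial on that space and on an orthogonal complement of a suitable $2\ell$-dimensional enlargement of it, and hence splits orthogonally into standard blocks. Second, a smaller presentational gap: your base case only normalizes a basic null involution within its own plane-pair $\mathbb{P}$; to conclude that any two basic null involutions (supported on different $\mathbb{P}$, $\mathbb{P}'$) are conjugate you already need the Witt cancellation argument for the complements that you invoke later for general $\ell$, so that mechanism belongs in the base case as well.
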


Finally we consider involutions acting only on the radical of $W$.  

\begin{prop} \label{Oqk_radV}
$\Orth(\q_{\rad(W)},k) \cong \GL_d(k) \ltimes \Mat_{d,s-d}(k)$ where $d$ is the defect of $\rad(W)$.
\end{prop}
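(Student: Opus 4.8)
The plan is to compute $\Orth(\q_{\rad(W)},k)$ explicitly in a basis adapted to the defect. The starting observation is that, by definition of the radical, the bilinear form $\mathrm{B}$ vanishes identically on $\rad(W)$; hence for $x,y \in \rad(W)$ the identity $\q(x+y)=\q(x)+\q(y)+\mathrm{B}(x,y)$ collapses to $\q(x+y)=\q(x)+\q(y)$, and together with $\q(ax)=a^2\q(x)$ this makes $\q$ restricted to $\rad(W)$ an additive, Frobenius-semilinear map into $k$. Consequently a map $\varphi\in\GL(\rad(W))$ lies in $\Orth(\q_{\rad(W)},k)$ as soon as $\q(\varphi(g))=\q(g)$ holds for each vector $g$ in a fixed basis, so the whole problem reduces to a finite system of scalar equations.

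Next I would fix the basis $\{g_1,\dots,g_d,g_{d+1},\dots,g_s\}$ supplied by Proposition \ref{hofflaghdecomp}, where $g_1,\dots,g_d$ span $\mathrm{def}(W)$ (so $\q(g_i)=0$) and the norms $c_j:=\q(g_j)$ for $d<j\le s$ are linearly independent over $k^2$; this last property is exactly the statement that the totally singular anisotropic part $\widetilde{\q_s}$ is anisotropic. Writing $\varphi(g_i)=\sum_j m_{ji}g_j$, semilinearity gives $\q(\varphi(g_i))=\sum_j m_{ji}^2 c_j$, so membership in the orthogonal group becomes the system $\sum_j m_{ji}^2 c_j = c_i$ for all $i$.

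The crux is to read off the matrix $M=(m_{ji})$ from this system using the $k^2$-independence of $\{c_{d+1},\dots,c_s\}$ together with the injectivity of squaring in characteristic $2$. For a defect index $i\le d$ the right-hand side is $0$, forcing $\sum_{j>d} m_{ji}^2 c_j = 0$ and hence $m_{ji}=0$ for every $j>d$; for an anisotropic index $i>d$ the equation $\sum_{j>d}m_{ji}^2 c_j = c_i$ forces $m_{ji}^2=\delta_{ij}$, i.e. $m_{ii}=1$ and $m_{ji}=0$ for $j>d$ with $j\neq i$. No constraint at all is placed on the entries $m_{ji}$ with $j\le d<i$. Ordering the defect indices first, every $\varphi\in\Orth(\q_{\rad(W)},k)$ therefore has the block shape
\[
M=\begin{pmatrix} g & v \\ 0 & I_{s-d}\end{pmatrix},\qquad g\in\Mat_{d,d}(k),\ v\in\Mat_{d,s-d}(k),
\]
and invertibility of $\varphi$ is equivalent to $g\in\GL_d(k)$.

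Finally I would identify this matrix group with the asserted semidirect product. Block multiplication gives
\[
\begin{pmatrix} g_1 & v_1 \\ 0 & I\end{pmatrix}\begin{pmatrix} g_2 & v_2 \\ 0 & I\end{pmatrix}=\begin{pmatrix} g_1g_2 & g_1 v_2 + v_1 \\ 0 & I\end{pmatrix},
\]
so the matrices with $g=I_d$ form a normal subgroup isomorphic to the additive group $\Mat_{d,s-d}(k)$, the matrices with $v=0$ form a complement isomorphic to $\GL_d(k)$, and the conjugation action of the latter on the former is $v\mapsto gv$. This exhibits $\Orth(\q_{\rad(W)},k)\cong \GL_d(k)\ltimes \Mat_{d,s-d}(k)$, as claimed. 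The one delicate point in the argument is the rigidity forced by anisotropy, namely that the lower-right block must be exactly $I_{s-d}$: this is precisely where the linear independence of the $c_j$ over $k^2$ is indispensable, and it is also the reason the $\langle 0\rangle$-part contributes the free matrix block while the anisotropic part contributes nothing to the group.
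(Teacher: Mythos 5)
Your proof is correct and takes essentially the same route as the paper: the same basis adapted to the defect, the same block upper-triangular shape $\bigl[\begin{smallmatrix} \psi & M \\ 0 & \id \end{smallmatrix}\bigr]$ forced by anisotropy of the totally singular part, and the same identification of the semidirect product via block multiplication. Your reformulation of anisotropy as $k^{2}$-linear independence of the norms $c_j$, leading to the scalar system $\sum_j m_{ji}^{2}c_j = c_i$, is a more explicit justification of the block shape than the paper's qualitative argument, but the underlying idea is identical.
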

\begin{proof}
Let $d$ be the defect of $W$.  We can choose a basis of the form
\[ \{g_1, \ldots, g_d, g_{d+1}, \ldots, g_s\} \]
where $\q(g_i)=0$ for $1\leq i \leq d$ 
\[ \mathcal{G}_{\mathrm{is}} = \{g_{1}, \ldots, g_d\}, \]
and the subspace spanned by
\[  \mathcal{G}_{\mathrm{an}} = \{g_{d+1}, \ldots, g_s\} \]
is anisotropic.  We claim we can represent an isometry on $\mathrm{rad}(W)$ by 
\[ \Psi= \begin{bmatrix}
	\psi & M \\
	0 & \id
	\end{bmatrix}. \]
We must have $\Psi|_{\mathcal{G}_{\mathrm{an}}} = \id$ otherwise there would exist two elements in $\mathcal{G}_{\mathrm{an}}$ with the same norm, and since the space is totally singular the sum of the two vectors would have norm zero. The restriction $\Psi|_{\mathcal{G}_{\mathrm{is}}}= \psi \in \GL_d(k)$ since $\psi$ must be invertible but has no other contraints due to the bilinear form and the norm being identically zero for $\overline{\mathcal{G}_{\mathrm{is}}}$.  The isometric image of an element from $\overline{\mathcal{G}_{\mathrm{is}}}$ cannot include any components from $\overline{\mathcal{G}_{\mathrm{an}}}$ since that would imply that $\overline{\mathcal{G}_{\mathrm{an}}}$ was not anisotropic.  Lastly the $M$ block is free since we can add any isotropic vector from the the radical to any vector in $W$ and leave the quadratic form (and bilinear form) invariant. 

We can factor 
\[ \Psi= \begin{bmatrix}
	\psi & M \\
	0 & \id
	\end{bmatrix}=
	\begin{bmatrix}
	\psi & 0 \\
	0 & \id
	\end{bmatrix}
	\begin{bmatrix}
	\id & M \\
	0 & \id
	\end{bmatrix} \]
where $\bigl[\begin{smallmatrix} \psi & M \\ 0 & \id\end{smallmatrix} \bigr]\bigl[\begin{smallmatrix} \id & B \\ 0 & \id\end{smallmatrix} \bigr]\bigl[\begin{smallmatrix} \psi & M \\ 0 & \id\end{smallmatrix} \bigr]^{-1} = \bigl[\begin{smallmatrix} \id & \psi B \\ 0 & \id\end{smallmatrix} \bigr]$.
\end{proof}

A \emph{radical involution} is an element $\rho \in \Orth(\q,k)$ or order $2$ defined on vectors in the radical of $W$.  Notice that a nontrivial orthogonal transformation on $\rad(W)$ detects a defective vector in $W$, since if $\rho(g) = g'$ then $\q(g+g') = \q(g) + \q(g') = 0$.  A \emph{basic radical involution} is a radical involution $\rho_i \in \Orth(\q,k)$ such that $\rho_i(g_i) = g_i'$, where $g_i, g_i'$ are linearly independent vectors in $\rad(W)$ with $\q(g_i) = \q(g_i')$, and $\rho_i$ acts trivially on the complement of $\overline{\{g_i, g_i'\}}$.  All radical involutions can be written as a finite product of basic radical involutions.

\begin{prop}
Two basic radical involutions $\rho_1, \rho_2$ are $\Orth(\q,k)$-conjugate if and only if $\rho_1$ and $\rho_2$ act non-trivially on isometric vectors.
\end{prop}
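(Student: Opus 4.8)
The plan is to reduce everything to the totally singular space $V := \rad(W)$ and to encode a basic radical involution by the nilpotent operator $N := \rho + \id$. First I would note that every isometry of $(\rad(W),\q)$ extends to an element of $\Orth(\q,k)$ by acting as the identity on a fixed complement of $\rad(W)$ in $W$; the verification that this preserves $\q$ uses only that $\B(x,\cdot)=0$ for $x\in\rad(W)$, exactly as in the proof of Proposition \ref{Oqk_radV}. Since every $\varphi\in\Orth(\q,k)$ stabilizes $\rad(W)$, and a basic radical involution may be taken to act trivially off $\rad(W)$, two basic radical involutions are $\Orth(\q,k)$-conjugate if and only if their restrictions are conjugate in $\Orth(\q_{\rad(W)},k)$. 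Inside $V$ we have $\B\equiv 0$ and $\q$ additive, and for $\rho=\rho_i$ the residual space is the isotropic line $\spa\{g_i+g_i'\}$ (note $\q(g_i+g_i')=\q(g_i)+\q(g_i')=0$), so $N$ has rank one with $N^2=0$, writable as $N = r_i\otimes\ell_i$ (i.e. $v\mapsto \ell_i(v)\,r_i$) with $r_i=g_i+g_i'$ spanning the residual line and $F_i:=\ker\ell_i$ the fixed hyperplane containing $r_i$. The non-fixed vectors of $\rho$ are precisely the vectors on which it acts non-trivially.

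For the forward direction, suppose $\mathcal{I}_{\varphi}(\rho_1)=\rho_2$ with $\varphi\in\Orth(\q_V,k)$. Then $\varphi$ carries non-fixed vectors of $\rho_1$ to non-fixed vectors of $\rho_2$; in particular $\varphi(g_1)$ is a vector on which $\rho_2$ acts non-trivially, and since $\varphi$ preserves $\q$ we obtain $\q(\varphi(g_1))=\q(g_1)$. Hence $\rho_1$ and $\rho_2$ act non-trivially on vectors of equal norm, that is, on isometric vectors.

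For the converse I would construct the conjugating isometry. After rescaling an inducing vector (replacing $g_2,g_2'$ by $\lambda g_2,\lambda g_2'$, which still induce $\rho_2$), the isometry hypothesis lets me assume $\q(g_1)=\q(g_2)$ and $\q(g_1')=\q(g_2')$, so the assignment $g_1\mapsto g_2$, $g_1'\mapsto g_2'$ is an isometry of $\overline{\{g_1,g_1'\}}$ onto $\overline{\{g_2,g_2'\}}$ (the two off-diagonal $\B$-values vanish and the third norm is $0=0$). I would extend this to an isometry $\varphi$ of all of $V$ by a Witt-type extension theorem for totally singular forms, chosen so that it additionally carries the fixed complement $C_1$ onto $C_2$. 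From the rank-one description $N_i=r_i\otimes\ell_i$, the identity $\mathcal{I}_{\varphi}(\rho_1)=\rho_2$ holds exactly when $\varphi(r_1)\in\spa\{r_2\}$ and $\varphi(F_1)=F_2$, which is precisely what such an extension supplies.

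The main obstacle is this last extension step. Matching the swapped pair is immediate, but forcing $\varphi(F_1)=F_2$ requires that the fixed complements $C_1,C_2$ themselves be isometric, equivalently that the completions of $\langle\q(g_i)\rangle\perp\langle 0\rangle$ inside $\q_V$ agree; since Witt cancellation may fail for totally singular quadratic forms, this does not follow from equality of a single norm alone. I would resolve it using the uniqueness of the anisotropic part and defect in Proposition \ref{hofflaghdecomp} together with the explicit $\GL_d(k)\ltimes\Mat_{d,s-d}(k)$ description from Proposition \ref{Oqk_radV}: the $\GL_d(k)$ factor moves the isotropic residual line and the isotropic part of $F_i$ freely, while the $\Mat_{d,s-d}(k)$ factor adjusts the anisotropic completion, and together they realize the required extension, completing the converse.
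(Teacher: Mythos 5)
The paper itself never proves this proposition---Section~3 only restates it from \cite{hhs21}---so I can only judge your argument on its own terms. Your forward direction is fine, but the converse has a genuine gap, and in fact the statement under your reading of the hypothesis (the existence of \emph{one} pair of isometric vectors moved non-trivially by $\rho_1$ and $\rho_2$) is false, so no completion of your construction can work. Counterexample: let $W=\rad(W)=\spa\{r,h,g\}$ be totally singular with $\q=\langle 0,0,1\rangle$, i.e.\ $\q(\alpha r+\beta h+\gamma g)=\gamma^2$. Let $\rho_1$ swap $h\leftrightarrow h+r$ and fix $r,g$, and let $\rho_2$ swap $g\leftrightarrow g+r$ and fix $r,h$. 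Both are basic radical involutions (inducing pairs $(h,h+r)$ and $(g,g+r)$, fixed complements $\spa\{g\}$ and $\spa\{h\}$), and they act non-trivially on isometric vectors: $\rho_1$ moves $h+g$ and $\rho_2$ moves $g$, both of norm $1$; indeed $(h+g,\,h+g+r)$ is itself a legitimate inducing pair for $\rho_1$, isometric to $(g,g+r)$. Yet they are not conjugate: every $\varphi\in\Orth(\q,k)$ preserves the defect $\mathrm{def}(W)=\spa\{r,h\}$ (the set of isotropic vectors), $\rho_2$ fixes $\mathrm{def}(W)$ pointwise while $\rho_1$ does not, and ``fixes $\mathrm{def}(W)$ pointwise'' is a conjugation invariant. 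Equivalently, a conjugating isometry would have to carry $\mathrm{Fix}(\rho_1)=\spa\{r,g\}$, on which $\q$ takes nonzero values, onto $\mathrm{Fix}(\rho_2)=\spa\{r,h\}$, on which $\q\equiv 0$.

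This pinpoints why the ``main obstacle'' you flagged is not a technical nuisance but a genuine extra invariant. Conjugation carries fixed space to fixed space, so $\q|_{F_1}\cong\q|_{F_2}$ is a \emph{necessary} condition for conjugacy, and equality of a single norm of moved vectors does not imply it. Your proposed repair via Proposition~\ref{hofflaghdecomp} and Proposition~\ref{Oqk_radV} cannot succeed: every element of $\Orth(\q_{\rad(W)},k)\cong\GL_d(k)\ltimes\Mat_{d,s-d}(k)$ induces the identity on $\rad(W)/\mathrm{def}(W)$, so it can never alter the isometry class of $\q$ on a fixed hyperplane, never map a hyperplane meeting $\mathrm{def}(W)$ properly onto one containing it, and in particular never conjugates $\rho_1$ to $\rho_2$ above. (Your opening reduction has a related weakness: extending a conjugating isometry of $\rad(W)$ by the identity conjugates the full involutions only if $\rho_1,\rho_2$ act trivially on a \emph{common} complement, which the definition does not supply; two basic radical involutions with equal restrictions to $\rad(W)$ can even fail to be conjugate, by failure of Witt cancellation.) For the proposition to hold, ``act non-trivially on isometric vectors'' must be read as matching the entire set of moved vectors---equivalently the set of their norms, or the pair (residual line, fixed hyperplane) up to isometry of $W$---and a correct proof must build an isometry carrying $\bigl(\spa\{r_1\},F_1\bigr)$ onto $\bigl(\spa\{r_2\},F_2\bigr)$ from that full hypothesis, not from the single matched pair your construction starts with.
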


\begin{corollary} \label{conj_rad}
All radical involutions of length $m$ with quadratic signature isometric to
\[  \langle \q(g_1), \q(g_2), \ldots, \q(g_m) \rangle, \]
are $\Orth(\q,k)$-conjugate.
\end{corollary}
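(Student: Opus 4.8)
The plan is to induct on the length $m$, using the preceding Proposition on basic radical involutions as the base case. When $m=1$ a radical involution is simply a basic radical involution, and the hypothesis $\langle \q(g_1) \rangle \cong \langle \q(h_1) \rangle$ says exactly that the two inducing vectors are isometric, so that Proposition directly supplies an element of $\Orth(\q,k)$ conjugating one to the other.

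For the inductive step I would produce a single $\phi \in \Orth(\q,k)$ that simultaneously conjugates all $m$ basic factors. Write the two involutions as reduced products $\rho = \rho_m \cdots \rho_1$ and $\sigma = \sigma_m \cdots \sigma_1$, where $\rho_i$ interchanges $g_i, g_i'$ and $\sigma_i$ interchanges $h_i, h_i'$; reducedness forces the residual vectors $g_i + g_i'$ (respectively $h_i + h_i'$) to be linearly independent, and since $\q(g_i+g_i') = 2\q(g_i) = 0$ these residual lines lie in the defect, while the basic factors pairwise commute because their supports are independent. Because the radical is totally singular the bilinear form vanishes on it, so a linear map between subspaces of $\rad(W)$ is an isometry precisely when it preserves $\q$, i.e. when it matches norms. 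Hence it suffices to build $\phi$ carrying each $g_i \mapsto h_i$ and $g_i' \mapsto h_i'$: conjugation then carries the swap $\rho_i$ to the swap $\sigma_i$, and therefore $\rho$ to $\sigma$. The isometry $\langle \q(g_1), \ldots, \q(g_m) \rangle \cong \langle \q(h_1), \ldots, \q(h_m) \rangle$ of totally singular signatures guarantees the norms can be matched after reindexing, and when the isometry genuinely mixes generators rather than permuting them, the Lemma on representations spanning the same space over $k^2$ lets me replace the inducing set of $\rho$ by an equivalent one realizing the same involution with norms equal to the $\q(h_i)$.

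The main obstacle is the extension step: a partial norm-preserving map defined on the span of the inducing vectors must be promoted to a genuine element of $\Orth(\q,k)$, and the classical Witt extension theorem is not available without care over a field of characteristic $2$ when the form is defective, which is precisely the situation here since the relevant vectors lie in the totally singular radical. I would therefore avoid invoking Witt's theorem as a black box and instead build $\phi$ from the explicit description $\Orth(\q_{\rad(W)},k) \cong \GL_d(k) \ltimes \Mat_{d,s-d}(k)$ of Proposition \ref{Oqk_radV}: the $\GL_d(k)$ factor realizes any change of basis of the $d$-dimensional defect (handling inducing vectors of norm zero, and, after checking that an inducing pair has equal anisotropic component and so differs only by a defect vector, the general case as well), while the $\Mat_{d,s-d}(k)$ factor supplies the freedom to add defect vectors to the anisotropic part. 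The delicate bookkeeping is to verify that the resulting $\phi$ fixes the complements on which the basic factors act trivially, so that $\phi \rho_i \phi^{-1}$ equals $\sigma_i$ exactly and not merely up to its fixed part; this is the step I expect to require the most care.
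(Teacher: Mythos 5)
Your overall plan --- reduce to the preceding Proposition on basic radical involutions and produce a single $\phi \in \Orth(\q,k)$ conjugating all basic factors at once, built from the explicit description of $\Orth(\q_{\rad(W)},k)$ in Proposition \ref{Oqk_radV} rather than from a Witt-extension argument --- is exactly how the corollary is meant to follow; the paper itself offers no proof (Section 3 is a summary of results from \cite{hhs21}, and the corollary is presented as immediate from the Proposition). The problem is that your proposal has a genuine gap, and it sits precisely at the step you set aside as ``delicate bookkeeping.'' The sufficiency claim ``it suffices to build $\phi$ carrying each $g_i \mapsto h_i$ and $g_i' \mapsto h_i'$'' is false: a basic radical involution is not determined by its inducing pair. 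The complement on which it acts trivially --- equivalently, its fixed space --- is part of the data, and it is a conjugacy invariant that pair-matching does not control.

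Here is a concrete failure. Let $W = \rad(W)$ have basis $\{g_1,g_2,g_3\}$ with $\q = \langle 0,1,a \rangle$, where $a \in k \setminus k^2$ (so $k$ is imperfect). Let $\rho$ fix $g_1,g_3$ and send $g_2 \mapsto g_2+g_1$, and let $\sigma$ fix $g_1$ and send $g_2 \mapsto g_2+g_1$, $g_3 \mapsto g_3+g_1$. Both are basic radical involutions with the \emph{same} inducing pair $(g_2,\, g_2+g_1)$: $\sigma$ acts trivially on the complement $\spa\{g_2+g_3\}$ of $\overline{\{g_2,g_2+g_1\}}$. Both therefore have length $1$ and admit the signature $\langle 1 \rangle$, and $\phi = \id$ matches the inducing pairs. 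Yet they are not $\Orth(\q,k)$-conjugate: since $a \notin k^2$, every $\phi \in \Orth(\q,k)$ satisfies $\phi(g_1)=\lambda g_1$, $\phi(g_2)=g_2+\mu g_1$, $\phi(g_3)=g_3+\nu g_1$ (this is Proposition \ref{Oqk_radV} with $d=1$), and a direct computation then gives $\phi\rho\phi^{-1}(g_3)=g_3$, whereas $\sigma(g_3)=g_3+g_1$. So arranging $\phi(\mathrm{Fix}(\rho))=\mathrm{Fix}(\sigma)$ is not an after-the-fact verification; it can genuinely fail even when the pairs match, and it is exactly here that the hypothesis of the corollary has to be brought to bear --- note that $\sigma$ moves a vector of norm $a$ while $\rho$ does not, which is the finer invariant (``act non-trivially on isometric vectors,'' read as a condition on \emph{all} moved vectors) appearing in the preceding Proposition. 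As written, your argument would prove $\rho$ and $\sigma$ conjugate, which is wrong, so the proof cannot be completed without reworking how the signature hypothesis is used. A secondary gap: the lemma you invoke for re-indexing norms concerns nonsingular forms $[a_i,c_i]$ whose first slots span a common $k^2$-space; it says nothing about totally singular forms $\langle a_1,\ldots,a_m\rangle$, so the norm-matching/re-factorization step also needs a separate justification.
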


\section{Totally singular spaces}

We are now ready to discuss the isomorphism classes of the fixed point groups of our involutions of $\Orth(\q,k)$, which can be used to study the corresponding symmetric $k$-varieties.  First, we discuss the fixed point groups of involutions acting on a totally singular vector space.  This in turn leads to a discussion about fixed point groups of diagonal involutions on nonsingular subspaces.

Let us consider a vector space $W$ such that $W= \rad(W)$.   Recall that in such a vector space any quadratic form will be of the form
\[ \q = \langle \q(g_1), \q(g_2), \ldots, \q(g_s) \rangle, \]
and by Proposition \ref{hofflaghdecomp} we have an isometry 
\[ \q \cong \langle 0, \ldots, 0, \q(g_1), \q(g_2), \ldots, \q(g_{s-d}) \rangle, \]
where $d$ is the defect of $\q$.

By Proposition \ref{Oqk_radV} we have $\Orth(\q_{\rad(W)}, k) \cong \GL_d(k) \ltimes \Mat_{d,s-d}(k)$.  For an element $\delta \in \Orth(\q,k)$ to be fixed by conjugation with the involution $\rho$ we need $\rho \delta \rho = \delta$.

\begin{prop}
If $\q$ is a quadratic form on $W = \mathrm{rad}(W)$ and $\rho \in \Orth(\q,k)$ is an involution on $W$ such that $\rho(g_i) = g_i'$  and quadratic signature
\[  \langle \q(g_1),\q(g_2), \cdots, \q(g_n) \rangle \]
then $\Orth(\q,k)^{\mathcal{I}_{\rho}} \cong \Orth(\q_{\mathcal{G} \cup \mathcal{H}}, k) \ltimes \Mat_{n,s-n}(k)$
where 
\begin{align*}
\mathcal{G}' &=  \{g_1',\ldots, g_n' \} \\
\mathcal{G} &=  \{g_1,\ldots, g_n \} \\
\mathcal{H} &=  \{h_1, \ldots, h_{s-2n}\}
\end{align*}
 with $\rho(h_i) = h_i$ and $\overline{ \mathcal{G}' \cup \mathcal{G} \cup \mathcal{H} } = W$.
\end{prop}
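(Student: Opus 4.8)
The plan is to study $\Orth(\q,k)^{\mathcal{I}_\rho}$ through the residual operator $\rho + \id$, whose image records the swapped pairs, and to realize the fixed point group as a split extension with quotient $\Orth(\q_{\mathcal{G}\cup\mathcal{H}},k)$ and kernel a free matrix group. First I would pass to an adapted basis. Setting $u_i = g_i + g_i'$, the change of coordinates replacing each $g_i'$ by $u_i$ is unitriangular, so $\{u_1,\ldots,u_n,h_1,\ldots,h_{s-2n},g_1,\ldots,g_n\}$ is again a basis of $W$. Since $\rho$ is an isometry interchanging $g_i$ and $g_i'$ we have $\q(g_i')=\q(g_i)$, hence $\q(u_i)=\q(g_i)+\q(g_i')+\mathrm{B}(g_i,g_i')=0$ because $W$ is totally singular; thus $\overline{\{u_1,\ldots,u_n\}}$ is totally isotropic. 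In this basis $\rho+\id$ sends $g_i\mapsto u_i$ and annihilates every $u_i$ and $h_j$, so $\im(\rho+\id)=\overline{\{u_1,\ldots,u_n\}}$ has dimension $n$, while its kernel is the fixed space $W^\rho=\overline{\{u_i\}\cup\mathcal{H}}$.

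Next I would characterize membership in the fixed point group. An element $\delta$ satisfies $\mathcal{I}_\rho(\delta)=\delta$ exactly when it commutes with $\rho$, equivalently with $\rho+\id$. Writing $\delta$ in the adapted basis as a $3\times 3$ block matrix with block sizes $n,\ s-2n,\ n$ and imposing $(\rho+\id)\delta=\delta(\rho+\id)$ forces $\delta$ to be block upper triangular with coinciding top-left and bottom-right diagonal blocks, i.e.
\[ \delta = \begin{bmatrix} A & B & C \\ 0 & E & F \\ 0 & 0 & A \end{bmatrix}. \]
Because $W$ is totally singular the bilinear form vanishes, so $\q$ is additive along the adapted basis and the isometry condition $\q(\delta v)=\q(v)$ need only be verified on basis vectors. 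The columns passing through $B$ and $C$ merely add vectors of the isotropic space $\overline{\{u_i\}}$ and so impose no constraint whatsoever; the surviving conditions constrain only $A,E,F$.

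I would then descend the form. Since $\overline{\{u_i\}}$ is totally isotropic and $\mathrm{B}\equiv 0$, the quadratic form factors through $W/\overline{\{u_i\}}$, and the assignment $g_i\mapsto \overline{g_i}$, $h_j\mapsto \overline{h_j}$ identifies $(W/\overline{\{u_i\}},\overline{\q})$ isometrically with $(\overline{\mathcal{G}\cup\mathcal{H}},\q_{\mathcal{G}\cup\mathcal{H}})$. The induced map $\overline{\delta}$ has the block shape $\bigl[\begin{smallmatrix} E & F \\ 0 & A\end{smallmatrix}\bigr]$, and the constraints on $A,E,F$ extracted above say precisely that $\overline{\delta}\in\Orth(\q_{\mathcal{G}\cup\mathcal{H}},k)$. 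Thus $\delta\mapsto\overline{\delta}$ is a homomorphism $\Orth(\q,k)^{\mathcal{I}_\rho}\to\Orth(\q_{\mathcal{G}\cup\mathcal{H}},k)$ whose kernel consists of the matrices with $A=E=\id$, $F=0$ and arbitrary $(B,C)$, a copy of the additive group $\Mat_{n,s-n}(k)$. To split it I would lift each $\overline{\delta}$ to the block matrix with the same $A,E,F$ and $B=C=0$: this lift is again an isometry (the constraints are identical) and a homomorphism by a direct block multiplication, hence a section. This proves surjectivity and exhibits $\Orth(\q,k)^{\mathcal{I}_\rho}\cong\Orth(\q_{\mathcal{G}\cup\mathcal{H}},k)\ltimes\Mat_{n,s-n}(k)$, with the action given by conjugating the $(B,C)$ block by the diagonal blocks, just as in the factorization computed in the proof of Proposition \ref{Oqk_radV}.

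The step I expect to be the main obstacle is the bookkeeping that cleanly separates the free directions from the genuine isometry constraints: one must verify carefully that adjoining any combination of the isotropic $u_i$ (the $B$ and $C$ blocks) leaves every norm unchanged, so that the only conditions remaining on $A,E,F$ are exactly those cutting out $\Orth(\q_{\mathcal{G}\cup\mathcal{H}},k)$, and that the $u_i$ genuinely span all of $\im(\rho+\id)$, which is what pins the $\Mat$ factor to the dimension $n\times(s-n)$ rather than something larger. Total singularity is indispensable throughout, since it is precisely the vanishing of $\mathrm{B}$ that makes the norms decouple across the adapted basis and lets the quadratic form descend to the quotient.
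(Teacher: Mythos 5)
Your construction coincides with the paper's proof almost line for line: the adapted basis $\{u_1,\ldots,u_n\}\cup\mathcal{H}\cup\mathcal{G}$ with $u_i=g_i+g_i'$, the block-commutation argument forcing the upper-triangular shape with equal corner blocks, and the splitting off of the free $(B,C)$-block as a normal copy of the additive group $\Mat_{n,s-n}(k)$ are all exactly the paper's steps. The gap is in the identification of the other factor. The constraints you extract on $A,E,F$ do \emph{not} say that $\overline{\delta}$ is an arbitrary element of $\Orth(\q_{\mathcal{G}\cup\mathcal{H}},k)$; they say that $\overline{\delta}$ is an isometry of $\q_{\mathcal{G}\cup\mathcal{H}}$ \emph{which in addition leaves $\overline{\mathcal{H}}$ invariant}. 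This triangularity is not a harmless normalization but is forced: any $\delta$ commuting with $\rho$ preserves $\ker(\rho+\id)=\overline{\{u_1,\ldots,u_n\}\cup\mathcal{H}}$, so the induced map on $W/\overline{\{u_1,\ldots,u_n\}}$ preserves the image of $\overline{\mathcal{H}}$. A general element of $\Orth(\q_{\mathcal{G}\cup\mathcal{H}},k)$ has a nonzero block $\overline{\mathcal{H}}\to\overline{\mathcal{G}}$, is not of the shape your section prescribes, and admits no lift to $\Orth(\q,k)^{\mathcal{I}_\rho}$ whatsoever; hence $\delta\mapsto\overline{\delta}$ is not surjective onto the full orthogonal group and your splitting argument only works over the stabilizer of $\overline{\mathcal{H}}$.

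The failure is genuine, not cosmetic. Take $n=1$, $s=3$, with $\q(g_1)=\q(g_1')=\q(h_1)=a\neq 0$. The swap of $\overline{h_1}$ and $\overline{g_1}$ lies in $\Orth(\q_{\mathcal{G}\cup\mathcal{H}},k)$ but is induced by no fixed point of $\mathcal{I}_\rho$, since fixed points preserve the fixed space of $\rho$. Counting over $k=\bbf_q$: the fixed point group has order $q^2(q-1)$, which matches (stabilizer of $\overline{\mathcal{H}}$)$\,\ltimes\Mat_{1,2}(k)$, whereas $\Orth(\q_{\mathcal{G}\cup\mathcal{H}},k)\ltimes\Mat_{1,2}(k)$ has order $q^3(q-1)$. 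You should know that the paper's own proof is careful on precisely this point: its conclusion is $\Orth(\q,k)^{\mathcal{I}_\rho}\cong\Orth(\q_{\mathcal{H}\cup\mathcal{G}},k,\overline{\mathcal{H}})\ltimes\Mat_{n,s-n}(k)$, where $\Orth(\q_{\mathcal{H}\cup\mathcal{G}},k,\overline{\mathcal{H}})$ denotes the subgroup of isometries leaving $\overline{\mathcal{H}}$ invariant; it is the displayed statement of the proposition (which you were handed) that omits the restriction. So your argument, with the quotient corrected to this stabilizer subgroup, becomes the paper's proof; as written, it asserts a strictly stronger isomorphism that is false in general.
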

\begin{proof}
Let $\langle \q(g_1), \q(g_2), \ldots, \q(g_n) \rangle$ be the signature of a radical involution $\rho$ such that $\rho(g_i) = g_i'$ and $g_{i}'\neq g_i$ for any $i$ or $i'$.  Notice that $\rho(g_i + g_i') = g_i + g_i'$ and further that $\rho(g_i) = g_i + (g_i + g_i')$.  Now we order a basis for $W$, 
\[ \mathcal{W} = \{ g_1+g_1', g_2+g_2', \ldots, g_n+g_n', h_1, h_2, \ldots, h_{s-2n}, g_1,g_2, \ldots, g_n \} \]
such that $\overline{ \mathcal{W} } = W$ and $\rho(h_i) = h_i$.  With respect to this basis we can represent $\rho$ as the block matrix
\[ \rho = \begin{bmatrix}
	\id & 0 & \id \\
	0 & \id & 0 \\
	0 & 0 & \id
	\end{bmatrix}, \]
and so any $\delta \in \Orth(\q, k)^{\mathcal{I}_{\rho}}$
\[  \delta = \begin{bmatrix}
	\delta_{11} & \delta_{12} & \delta_{13} \\
	\delta_{21} & \delta_{22} & \delta_{23} \\
	\delta_{31} & \delta_{32} & \delta_{33} \\
	\end{bmatrix}, \]
such that $\delta\rho = \rho \delta$ is of the form
\[  \delta = \begin{bmatrix}
	\delta_{11} & \delta_{12} & \delta_{13} \\
	0 & \delta_{22} & \delta_{23} \\
	0 & 0 & \delta_{11} \\
	\end{bmatrix}= 
	\begin{bmatrix}
	\delta_{11} & 0 & 0 \\
	0 & \delta_{22} & \delta_{23} \\
	0 & 0 & \delta_{11} \\
	\end{bmatrix}
	\begin{bmatrix}
	\id & \delta_{12}' & \delta_{13}' \\
	0 & \id & 0 \\
	0 & 0 & \id \\
	\end{bmatrix} \]	
where $\delta_{ij}' = \delta_{11}^{-1}\delta_{ij}$.  Notice that $\delta_{12}'$ and $\delta_{13}'$ can take any matrix value, since $\overline{ \{ g_1+g_1', \ldots, g_n+g_n'\}}$ is contained in the defect of $W$ and the span of the remaining basis vectors would be 
\[ \mathcal{H} \cup \mathcal{G}  =  \{ h_1, \dots, h_{s-2n}, g_1, \ldots, g_n \}, \]
Now the subgroup
\[ \mathsf{M}=\left\{ \begin{bmatrix}
	\delta_{11} & 0 & 0 \\
	0 & \delta_{22} & \delta_{23} \\
	0 & 0 & \delta_{11} \\
	\end{bmatrix} \bigg| \bigl[\begin{smallmatrix} \delta_{22} & \delta_{23} \\ 0 & \delta_{11} \end{smallmatrix} \bigr] \in \Orth(\q_{\mathcal{H} \cup \mathcal{G} },k, \overline{\mathcal{H}} ) \right\}, \]
is such that $\mathsf{M} \cong \Orth(\q_{\mathcal{H} \cup \mathcal{G} },k, \overline{\mathcal{H}} )$ which is the subgroup of $\Orth(\q_{\mathcal{H} \cup \mathcal{G} },k)$ that leaves the subspace $\overline{\mathcal{H}}$ invariant.
The subgroup 
\[ \mathsf{M}'= \left\{ \begin{bmatrix}
	\id  & \delta_{12}' & \delta_{13}' \\
	0 & \id & 0 \\
	0 & 0 & \id \\
	\end{bmatrix} \bigg|  \delta_{12}' \in \Mat_{n, d'}(k), \delta_{13}' \in \Mat_{n,s-n-d'}(k) \right\}, \]
is such that $\mathsf{M}' \cong \Mat_{n,s-n}(k)$ and $\mathsf{M}$ is a normal subgroup of $\Orth(\q,k)^{\mathcal{I}_{\rho}}$,
so we have that $\Orth(\q,k)^{\mathcal{I}_{\rho}} \cong \Orth(\q_{\mathcal{H} \cup \mathcal{G} },k, \overline{\mathcal{H}} ) \ltimes \Mat_{n,s-n}(k)$.
\end{proof}

\begin{rmk}
Notice that if $W = \mathrm{rad}(W)$ is anisotropic then $\Orth(\q,k) = \{id\}$ so there are no involutions, and on the other hand if $W$ is totally isotropic we have that $\Orth(\q,k) \cong \GL_s(k)$.
\end{rmk}

\section{Nonsingular spaces}

Now we look at fixed point groups of diagonal involutions on nonsingular spaces. 

\begin{prop} \label{tranv_fixU}
Let $\tau = \tau_{u_l} \cdots \tau_{u_2}\tau_{u_1}$ be a reduced diagonal orthogonal involution with inducing set $\mathcal{U}$, then $\varphi \in \Orth(\q,k)^{\mathcal{I}_{\tau}}$ if and only if $\varphi( \overline{ \mathcal{U}})=\overline{\mathcal{U}}$ . 
\end{prop}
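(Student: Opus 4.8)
The plan is to prove the two implications separately, writing the fixed-point condition $\varphi \in \Orth(\q,k)^{\mathcal{I}_{\tau}}$ in the equivalent form $\varphi\tau = \tau\varphi$. Two ingredients drive the argument: an explicit description of the residual space $\im(\tau + \id)$, and the standard fact that conjugating an orthogonal transvection by an isometry yields another orthogonal transvection. The forward implication will follow formally from the first ingredient, while the converse will be reduced to Lemma \ref{equal_tr} via the second.

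First I would record how $\tau$ acts. Because the vectors of $\mathcal{U}$ are mutually orthogonal and the bilinear form is alternating in characteristic $2$, each factor $\tau_{u_i}$ fixes $\overline{\mathcal{U}}$ pointwise, the factors commute, and one obtains $\tau(z) = z + \sum_{i=1}^{l} \frac{\mathrm{B}(u_i,z)}{\q(u_i)} u_i$. Hence $(\tau + \id)(z) = \sum_{i=1}^l \frac{\mathrm{B}(u_i,z)}{\q(u_i)} u_i$ lies in $\overline{\mathcal{U}}$; since $W$ is nonsingular the functionals $z \mapsto \mathrm{B}(u_i,z)$ are linearly independent, so the image fills all of $\overline{\mathcal{U}}$. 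Thus the residual space of the diagonal involution $\tau$ is exactly $\overline{\mathcal{U}}$, and $\mathcal{U}$ is linearly independent.

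For the direction $\varphi \in \Orth(\q,k)^{\mathcal{I}_{\tau}} \Rightarrow \varphi(\overline{\mathcal{U}}) = \overline{\mathcal{U}}$, I note that $\varphi\tau = \tau\varphi$ forces $\varphi$ to commute with $\tau + \id$. Since $\varphi$ is invertible, $\varphi(\im(\tau + \id)) = \im((\tau+\id)\varphi) = (\tau+\id)(W) = \im(\tau + \id)$, which by the previous paragraph reads precisely $\varphi(\overline{\mathcal{U}}) = \overline{\mathcal{U}}$. This direction is essentially formal once the residual space has been identified with $\overline{\mathcal{U}}$.

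For the converse, assume $\varphi(\overline{\mathcal{U}}) = \overline{\mathcal{U}}$. A short computation using that $\varphi \in \Orth(\q,k)$ preserves both $\q$ and $\mathrm{B}$ gives $\varphi\tau_u\varphi^{-1} = \tau_{\varphi(u)}$, whence $\varphi\tau\varphi^{-1} = \tau_{\varphi(u_l)}\cdots\tau_{\varphi(u_1)}$. The set $\varphi(\mathcal{U})$ consists of $l$ mutually orthogonal anisotropic vectors (orthogonality and norms being preserved by $\varphi$) spanning $\varphi(\overline{\mathcal{U}}) = \overline{\mathcal{U}}$, so $\varphi\tau\varphi^{-1}$ is again a reduced diagonal involution with inducing span $\overline{\mathcal{U}}$ and bilinear form $\langle \q(\varphi(u_1)), \ldots, \q(\varphi(u_l))\rangle_{\mathrm{B}} = \langle \q(u_1), \ldots, \q(u_l)\rangle_{\mathrm{B}}$. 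Lemma \ref{equal_tr} then yields $\varphi\tau\varphi^{-1} = \tau$, i.e.\ $\varphi\tau = \tau\varphi$. The step needing the most care is exactly this one: one must check that $\varphi(\mathcal{U})$ is a bona fide reduced inducing set (same length, mutually orthogonal, spanning $\overline{\mathcal{U}}$) so that Lemma \ref{equal_tr} applies, and that the two diagonal bilinear forms coincide because $\varphi$ is an isometry. Beyond this bookkeeping I expect no genuine obstacle, since both hypotheses of Lemma \ref{equal_tr} are met directly.
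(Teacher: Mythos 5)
Your proof is correct in substance but reaches the proposition by a partly different route than the paper, and one justification needs a patch. The paper handles both directions with the imported classification results: for the forward implication it conjugates, $\varphi\,\tau_{u_l}\cdots\tau_{u_1}\varphi^{-1} = \tau_{\varphi(u_l)}\cdots\tau_{\varphi(u_1)}$, and reads off $\varphi(\overline{\mathcal{U}})=\overline{\mathcal{U}}$ from Lemma \ref{equal_tr}; for the converse it notes $\q(\varphi(u_i))=\q(u_i)$ and cites Theorem \ref{conj_transv}. Your converse is essentially the paper's computation, except that you cite Lemma \ref{equal_tr} instead of Theorem \ref{conj_transv}; this is arguably the sharper reference, since Theorem \ref{conj_transv} as stated constrains only the dimensions of the spans and the diagonal forms, whereas it is exactly the span condition $\overline{\varphi(\mathcal{U})}=\overline{\mathcal{U}}$ (your hypothesis) that Lemma \ref{equal_tr} records as necessary and sufficient. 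Your forward implication is genuinely different and more elementary: identifying the residual space $\im(\tau+\id)$ with $\overline{\mathcal{U}}$ turns invariance of $\overline{\mathcal{U}}$ into a purely formal statement about any invertible map commuting with $\tau$, with no appeal to Lemma \ref{equal_tr} at all. It also makes visible exactly where diagonality enters: for a hyperbolic involution $\im(\tau+\id)$ is a proper subspace of $\overline{\mathcal{U}}$, and the formal argument would only show that $\varphi$ preserves that subspace.

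The patch: your claim that ``since $W$ is nonsingular the functionals $z\mapsto \mathrm{B}(u_i,z)$ are linearly independent'' is circular as written. Nonsingularity makes $u\mapsto \mathrm{B}(u,\cdot)$ injective, so the functionals are independent if and only if the vectors $u_i$ are --- and mutually orthogonal anisotropic vectors need not be independent in characteristic $2$: since the form is alternating, $u_1,u_2,u_1+u_2$ are mutually orthogonal, which is precisely how hyperbolic involutions arise. The independence of $\mathcal{U}$ must instead come from the hypothesis that $\tau$ is reduced and diagonal: then $l = \mathrm{res}(\tau) = \dim\im(\tau+\id) \leq \dim\overline{\mathcal{U}} \leq l$, which forces both $\dim\overline{\mathcal{U}}=l$ and $\im(\tau+\id)=\overline{\mathcal{U}}$ at once. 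With that one-line replacement (which also delivers the residual-space identification you need, without any argument about functionals), your proof is complete.
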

\begin{proof}
Let $\varphi \in \Orth(\q,k)^{\mathcal{I}_{\tau}}$ notice that $\tau \varphi \tau^{-1} = \varphi$ if and only if $\varphi \tau \varphi^{-1} = \tau$ and so
\[ \varphi  \tau_{u_l} \cdots \tau_{u_2}\tau_{u_1} \varphi^{-1} =  \tau_{\varphi(u_l)} \cdots \tau_{\varphi(u_2)}\tau_{\varphi(u_1)} = \tau_{u_l} \cdots \tau_{u_2}\tau_{u_1}. \]
By Lemma \ref{equal_tr} $\varphi(\overline{ \mathcal{U} }) = \overline{ \mathcal{U}}$.

Now assume $\varphi \in \Orth(\q,k)$ and $\varphi( \overline{ \mathcal{U}}) = \overline{ \mathcal{U} }$ then 
\[ \langle \q( \varphi(u_1) ), \q( \varphi(u_2)), \ldots, \q( \varphi(u_l)) \rangle_B = \langle \q(u_1), \q( u_2), \ldots, \q( u_l) \rangle_B. \]
So by Theorem \ref{conj_transv} we have $\varphi \tau \varphi^{-1} = \tau$.
\end{proof}

We begin with the case that the length of $\tau$ is maximized in the nonsingular space. We denote by $\varphi_{\mathcal{U}}$ the matrix representation of the resriction of $\varphi$ to the subspace $\overline{ \mathcal{U}}$ with respect to the basis $\mathcal{U}$.  Recall that 
\[ \q_{\mathcal{U}} \cong \langle \q(u_1), \q(u_2), \ldots, \q(u_l) \rangle, \]
since $\overline{ \mathcal{U}}$ is a totally singular subspace of $W$.  In this case, Proposition \ref{hofflaghdecomp} yields

\[ \q_{\mathcal{U}} \cong m' \times \langle 0 \rangle \perp \langle c_{m'+1}, \ldots, c_{l} \rangle. \] 
This corresponds to a basis of $\mathcal{U}' = \{u_1', u_2', \ldots, u_l' \}$ such that $\q(u_i^{\prime})=c_i$ for $1 \leq i \leq l$ and $c_i=0$ for $1\leq i \leq m'$.  Since the vectors in $\overline{\mathcal{U}}$ are not in $\rad(W)$ there exist non-singular completions within $W$ that form a basis for a nonsingular subspace.  A nonsingular completion of $\q$ is of the form
\[ [0,\q(v_1)] \perp [ 0, \q(v_2) ] \perp \ldots \perp [0, \q(v_{m'}) ] \perp [ c_{m'+1}, \q(v_{m'+1})] \perp \ldots \perp [c_l, \q(v_l)], \]
where $\mathrm{B}(u_i',v_i)=1$, $\mathrm{B}(u_i',v_j)=0$ when $j\neq i$, and $\mathcal{V} = \{v_1, \ldots, v_l\}$ which spans a totally singular subspace.  Notice that for $1\leq i \leq m'$ we can choose $v_i' = v_i + \q(v_i)u_i'$.  We will refer to the vectors of this completion as $\mathcal{V}'$.  Now
\[ \mathrm{B}(u_i', v_i') = \mathrm{B}(u_i', v_i) + \q(v_i)\mathrm{B}(u_i', u_i') = \mathrm{B}(u_i', v_i) = 1, \]
and
\[ \q(v_i') = \q(v_i) + \q(v_i)^2\q(u_i')+\q(v_i)B(v_i,u_i') = \q(v_i) + \q(v_i) = 0. \]
So there is a basis $\mathcal{W}$ of this nonsingular space of the form
\[\mathcal{W}'=\mathcal{U}'\cup \mathcal{V}'= \{u_1',v_1', u_2', v_2', \ldots, u_l',v_l' \}, \]
with the corresponding quadratic form
\[ \q_{\mathcal{W}} \cong m' \times [0,0] \perp [c_{m'+1}, d_{m'+1}] \perp \ldots \perp [c_l,d_l], \]
where $\q(v_i') = d_i$ for $1 \leq i \leq l$. 

Let us consider $\varphi \in \Orth(\q,k)^{\mathcal{I}_{\tau}}$.  Since $\varphi$ leaves $\overline{ \mathcal{U}}$ invariant we have 
\[ \varphi=\begin{bmatrix}
			\varphi_{\mathcal{U}} & \varphi_{\mathcal{U}}A \\
			0 & \varphi_{\mathcal{V}}
			\end{bmatrix}, \]
where  $\varphi_{\mathcal{V}} :\overline{ \mathcal{V}} \to \overline{ \mathcal{V}}$ and $A: \overline{ \mathcal{V}} \to \overline{ \mathcal{U}}$.  Considering the bilinear form we have
\[ \mathrm{B}(\varphi(u_i), \varphi(v_j) ) = \mathrm{B}(\varphi_{\mathcal{U}}(u_i), \varphi_{\mathcal{V}}(v_j) + \varphi_{\mathcal{U}}(Av_j))= \mathrm{B}(\varphi_{\mathcal{U}}(u_i), \varphi_{\mathcal{V}}(v_j) ) = \mathrm{B}(u_i,v_j). \]
So, we have shown the following
\begin{prop} \label{phiUstar}
The map $\varphi_{\mathcal{V}}$ is completely determined by $\varphi_{\mathcal{U}}$.
\end{prop}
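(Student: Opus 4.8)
The plan is to exploit the bilinear relation
\[ \mathrm{B}(\varphi_{\mathcal{U}}(u_i), \varphi_{\mathcal{V}}(v_j)) = \mathrm{B}(u_i, v_j) \]
established just above the statement, and to read it as a matrix equation pinning down $\varphi_{\mathcal{V}}$ uniquely in terms of $\varphi_{\mathcal{U}}$. The structural input is that $\overline{\mathcal{U}}$ and $\overline{\mathcal{V}}$ are each totally singular, hence totally isotropic for $\mathrm{B}$ (in characteristic $2$ a quadratic form vanishing on a subspace forces $\mathrm{B}$ to vanish there too), while the cross pairing is perfect: by the construction of the completion $\mathrm{B}(u_i, v_j) = \delta_{ij}$, so $\mathcal{U}$ and $\mathcal{V}$ are dual bases of $\overline{\mathcal{U}}$ and $\overline{\mathcal{V}}$ under $\mathrm{B}$.

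First I would set $P = [\mathrm{B}(u_i, v_j)] = \mathrm{I}_l$ and write $U$, $V$ for the matrices of $\varphi_{\mathcal{U}}$, $\varphi_{\mathcal{V}}$ in the bases $\mathcal{U}$, $\mathcal{V}$. Expanding the displayed relation in coordinates gives $U^{T} P V = P$, that is $U^{T} V = \mathrm{I}_l$. Since $\varphi$ is an isometry of a nonsingular space that leaves $\overline{\mathcal{U}}$ invariant, the restriction $\varphi_{\mathcal{U}}$ is invertible, so $U \in \GL_l(k)$; the equation then forces $V = (U^{T})^{-1} = (U^{-1})^{T}$. Thus $\varphi_{\mathcal{V}}$ is the inverse transpose of $\varphi_{\mathcal{U}}$ and is uniquely determined, which is exactly the claim.

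The only step requiring care is the vanishing of the contribution of the off-diagonal block, i.e. confirming that $A$ does not enter the pairing. This is where total singularity of $\overline{\mathcal{U}}$ is used: both $\varphi_{\mathcal{U}}(u_i)$ and $\varphi_{\mathcal{U}}(A v_j)$ lie in $\overline{\mathcal{U}}$, on which $\mathrm{B}$ restricts to $0$, so the term $\mathrm{B}(\varphi_{\mathcal{U}}(u_i), \varphi_{\mathcal{U}}(A v_j))$ drops out and leaves the clean identity $U^{T} V = \mathrm{I}_l$. I expect this to be the main (indeed essentially the only) obstacle; the rest is the routine fact that an invertible square matrix has a unique inverse transpose. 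I would close by remarking that the block $A$ itself is unconstrained, so while $\varphi_{\mathcal{V}}$ is determined by $\varphi_{\mathcal{U}}$, the full isometry $\varphi$ is not: the remaining freedom resides entirely in $A$.
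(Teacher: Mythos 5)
Your proposal is correct and follows essentially the same route as the paper: the authors also expand $\mathrm{B}(\varphi(u_i),\varphi(v_j))$, drop the $\mathrm{B}(\varphi_{\mathcal{U}}(u_i),\varphi_{\mathcal{U}}(Av_j))$ term because $\overline{\mathcal{U}}$ is totally singular, and conclude from $\mathrm{B}(\varphi_{\mathcal{U}}(u_i),\varphi_{\mathcal{V}}(v_j))=\mathrm{B}(u_i,v_j)$ that $\varphi_{\mathcal{V}}$ is forced, recording the same matrix identity $\varphi_{\mathcal{V}}=(\varphi_{\mathcal{U}}^{-1})^{T}$ immediately after the proposition. Your explicit remark that the perfect pairing between the dual bases $\mathcal{U}$ and $\mathcal{V}$ is what pins down $\varphi_{\mathcal{V}}$ uniquely is exactly the implicit content of the paper's computation.
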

We will now refer to $\varphi_{\mathcal{V}} = \varphi_{\mathcal{U}}^*$ and using the matrix representation of $\varphi_{\mathcal{U}}$ we have $\varphi_{\mathcal{U}}^*=(\varphi_{\mathcal{U}}^{-1})^T$.  We can represent $\varphi \in \Orth(\q,k)^{\mathcal{I}_{\tau}}$ with the block matrix
\[ \varphi= \begin{bmatrix}
			\varphi_{\mathcal{U}} & \varphi_{\mathcal{U}}A \\
			0 & \varphi_{\mathcal{U}}^*
			\end{bmatrix}  . \]

We define the additive group
\[ \mathsf{A}(\q_{\mathcal{U}},k) = \{ A \in \text{Lin}( \overline{ \mathcal{V}},\overline{ \mathcal{U}}) \ | \ \q(Av) = \mathrm{B}(v,Av) \}. \]
Notice that $\mathsf{A}(\q_{\mathcal{U}},k)$ embeds into $\Orth(\q,k)^{\mathcal{I}_{\tau}}$ as an additive group since
\[ \q( (A+C)v) =  \q(Av) + \q(Cv) = \mathrm{B}(v,Av) + \mathrm{B}(v,Cv) = \mathrm{B}(v,(A+C)v), \]
for $A,C \in \mathsf{A}(\q_{\mathcal{U}},k)$.  We will write $\varphi^{-*}$ for $(\varphi^{-1})^*$.  The following results give us some insight into the structure of the fixed point groups.

\begin{lemma} \label{adjoint}
If $u \in \overline{ \mathcal{U}}$ and $v \in \overline{ \mathcal{V}}$, then $\mathrm{B}(\varphi_{\mathcal{U}}(u),v) = \mathrm{B}(u,\varphi_{\mathcal{U}}^{-*}(v))$. 
\end{lemma}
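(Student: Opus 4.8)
The plan is to read the identity as an adjointness statement and derive it straight from the fact that $\varphi$ preserves the bilinear form, using that $\overline{\mathcal{U}}$ is totally singular to kill the unwanted cross terms. Since $\varphi$ preserves $\q$, it preserves the associated bilinear form, so $\mathrm{B}(\varphi(x),\varphi(y)) = \mathrm{B}(x,y)$ for all $x,y \in W$; the idea is to apply this to $x=u$ and a cleverly chosen $y \in \overline{\mathcal{V}}$ and then discard the part of $\varphi(y)$ that falls into $\overline{\mathcal{U}}$.

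First I would record the two structural facts already in place. By Proposition \ref{tranv_fixU} we have $\varphi(\overline{\mathcal{U}}) = \overline{\mathcal{U}}$, so $\varphi(u) = \varphi_{\mathcal{U}}(u) \in \overline{\mathcal{U}}$ for $u \in \overline{\mathcal{U}}$. Using the block form $\varphi = \bigl[\begin{smallmatrix} \varphi_{\mathcal{U}} & \varphi_{\mathcal{U}} A \\ 0 & \varphi_{\mathcal{U}}^* \end{smallmatrix}\bigr]$, any $w \in \overline{\mathcal{V}}$ satisfies $\varphi(w) = (\varphi_{\mathcal{U}}A)(w) + \varphi_{\mathcal{V}}(w)$ with $(\varphi_{\mathcal{U}}A)(w) \in \overline{\mathcal{U}}$ and $\varphi_{\mathcal{V}}(w) = \varphi_{\mathcal{U}}^*(w) \in \overline{\mathcal{V}}$. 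Because $\varphi_{\mathcal{V}} = \varphi_{\mathcal{U}}^*$ is invertible, given $v \in \overline{\mathcal{V}}$ I may set $w = \varphi_{\mathcal{V}}^{-1}(v) = \varphi_{\mathcal{U}}^{-*}(v)$, so that $\varphi(w) = v + u''$ for some $u'' \in \overline{\mathcal{U}}$.

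Then I would apply preservation of $\mathrm{B}$ to $u$ and $w$:
\[ \mathrm{B}(u, \varphi_{\mathcal{U}}^{-*}(v)) = \mathrm{B}(u, w) = \mathrm{B}(\varphi(u), \varphi(w)) = \mathrm{B}(\varphi_{\mathcal{U}}(u),\, v + u''). \]
Expanding the right-hand side gives $\mathrm{B}(\varphi_{\mathcal{U}}(u), v) + \mathrm{B}(\varphi_{\mathcal{U}}(u), u'')$, and since $\varphi_{\mathcal{U}}(u)$ and $u''$ both lie in the totally singular subspace $\overline{\mathcal{U}}$, the second term is zero. This leaves exactly $\mathrm{B}(\varphi_{\mathcal{U}}(u),v) = \mathrm{B}(u,\varphi_{\mathcal{U}}^{-*}(v))$, as claimed.

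I expect no real difficulty here; the only things requiring care are bookkeeping rather than insight. One must confirm that the notation $\varphi_{\mathcal{U}}^{-*} = (\varphi_{\mathcal{U}}^{-1})^*$ indeed coincides with $\varphi_{\mathcal{V}}^{-1}$ — which holds since $\varphi_{\mathcal{V}} = \varphi_{\mathcal{U}}^*$ and $*$ is the inverse-transpose, so both reduce to $\varphi_{\mathcal{U}}^T$ — and that the stray term $u''$ genuinely lands in $\overline{\mathcal{U}}$, where $\mathrm{B}$ vanishes identically. As a failsafe cross-check I would also verify the statement by direct coordinates: writing $u = \sum_i a_i u_i'$ and $v = \sum_j b_j v_j'$ and invoking the dual-basis pairing $\mathrm{B}(u_i', v_j') = \delta_{ij}$, both sides collapse to $\mathbf{a}^T \varphi_{\mathcal{U}}^T \mathbf{b}$. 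The coordinate route is more tedious but leaves no ambiguity, while the form-preservation argument is the cleaner one to present.
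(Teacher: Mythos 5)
Your proof is correct and is essentially the paper's own argument: the paper likewise applies $\mathrm{B}$-invariance of $\varphi$ to the pair $(u,\varphi_{\mathcal{U}}^{-*}(v))$, expands $\varphi(\varphi_{\mathcal{U}}^{-*}(v)) = v + \varphi_{\mathcal{U}}A\varphi_{\mathcal{U}}^{-*}(v)$ via the block form (your $v+u''$), and discards the cross term because both vectors lie in the totally singular subspace $\overline{\mathcal{U}}$. The coordinate cross-check is a harmless addition but not needed.
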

\begin{proof}
\begin{align*}
\mathrm{B}(u,\varphi_{\mathcal{U}}^{-*}(v)) &= \mathrm{B}(\varphi (u), \varphi \varphi_{\mathcal{U}}^{-*}(v)) \\
&= \mathrm{B}(\varphi_{\mathcal{U}}(u), (\varphi_{\mathcal{U}}^* + \varphi_{\mathcal{U}}A)\varphi_{\mathcal{U}}^{-*}(v))\\
&=\mathrm{B}(\varphi_{\mathcal{U}}(u), v + \varphi_{\mathcal{U}}A\varphi_{\mathcal{U}}^{-*}(v) ) \\
&=\mathrm{B}(\varphi_{\mathcal{U}}(u), v),
\end{align*}
since $\varphi_{\mathcal{U}}A\varphi_{\mathcal{U}}^{-*}(v) \in \mathcal{U}$.
\end{proof}

In the next result we see that this gives us an isomorphism of the additive group $\mathsf{A}(\q_{\mathcal{U}},k)$.

\begin{lemma} 
For $\varphi_{\mathcal{U}} \in \Orth(\q_\mathcal{U},k)$ and $A \in \mathsf{A}(\q_{\mathcal{U}},k)$ we have that 
\[ \Phi_{\varphi_{\mathcal{U}}}(A) = \varphi_{\mathcal{U}} A \varphi_{\mathcal{U}}^{-*}, \]
is an isomorphism of $\mathsf{A}(\q_{\mathcal{U}},k)$.
\end{lemma}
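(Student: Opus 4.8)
The plan is to verify three things in turn: that $\Phi_{\varphi_{\mathcal{U}}}$ maps the set $\mathsf{A}(\q_{\mathcal{U}},k)$ into itself (so it is well-defined as a self-map), that it respects the additive structure, and that it is a bijection. Additivity is immediate, since $\varphi_{\mathcal{U}}$ and $\varphi_{\mathcal{U}}^{-*}$ are linear maps and composition with linear maps distributes over addition, giving $\Phi_{\varphi_{\mathcal{U}}}(A+C) = \varphi_{\mathcal{U}}(A+C)\varphi_{\mathcal{U}}^{-*} = \varphi_{\mathcal{U}}A\varphi_{\mathcal{U}}^{-*} + \varphi_{\mathcal{U}}C\varphi_{\mathcal{U}}^{-*} = \Phi_{\varphi_{\mathcal{U}}}(A) + \Phi_{\varphi_{\mathcal{U}}}(C)$.

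The main obstacle, and the heart of the argument, is showing that $B := \varphi_{\mathcal{U}} A \varphi_{\mathcal{U}}^{-*}$ again satisfies the defining condition $\q(Bv) = \mathrm{B}(v,Bv)$ for every $v \in \overline{\mathcal{V}}$. I would set $w = \varphi_{\mathcal{U}}^{-*}(v) \in \overline{\mathcal{V}}$, so that $Bv = \varphi_{\mathcal{U}}(Aw)$ with $Aw \in \overline{\mathcal{U}}$. For the left-hand side, since $\varphi_{\mathcal{U}} \in \Orth(\q_{\mathcal{U}},k)$ preserves $\q$ on $\overline{\mathcal{U}}$ and $A \in \mathsf{A}(\q_{\mathcal{U}},k)$, we get $\q(Bv) = \q(\varphi_{\mathcal{U}}(Aw)) = \q(Aw) = \mathrm{B}(w,Aw)$. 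For the right-hand side, I would invoke symmetry of $\mathrm{B}$ together with Lemma \ref{adjoint}, computing $\mathrm{B}(v,Bv) = \mathrm{B}(\varphi_{\mathcal{U}}(Aw),v) = \mathrm{B}(Aw,\varphi_{\mathcal{U}}^{-*}(v)) = \mathrm{B}(Aw,w) = \mathrm{B}(w,Aw)$. The two sides coincide, so $B \in \mathsf{A}(\q_{\mathcal{U}},k)$ and $\Phi_{\varphi_{\mathcal{U}}}$ is a well-defined additive endomorphism.

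For bijectivity, I would record the composition law $\Phi_{\varphi_{\mathcal{U}}} \circ \Phi_{\psi_{\mathcal{U}}} = \Phi_{\varphi_{\mathcal{U}}\psi_{\mathcal{U}}}$, which follows from $\varphi_{\mathcal{U}}(\psi_{\mathcal{U}} A \psi_{\mathcal{U}}^{-*})\varphi_{\mathcal{U}}^{-*} = (\varphi_{\mathcal{U}}\psi_{\mathcal{U}}) A (\varphi_{\mathcal{U}}\psi_{\mathcal{U}})^{-*}$ once one checks, from the matrix description $\varphi_{\mathcal{U}}^* = (\varphi_{\mathcal{U}}^{-1})^T$, that $\psi_{\mathcal{U}}^{-*}\varphi_{\mathcal{U}}^{-*} = (\varphi_{\mathcal{U}}\psi_{\mathcal{U}})^{-*}$. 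Combined with $\Phi_{\id} = \id$, this shows that $\Phi_{\varphi_{\mathcal{U}}^{-1}}$ is a two-sided inverse of $\Phi_{\varphi_{\mathcal{U}}}$, so $\Phi_{\varphi_{\mathcal{U}}}$ is a bijection and hence an isomorphism of the additive group $\mathsf{A}(\q_{\mathcal{U}},k)$. The only point requiring care is the bookkeeping for the operation $(\cdot)^{-*}$, but this is routine given the explicit transpose-inverse formula already established.
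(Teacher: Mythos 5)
Your proposal is correct and follows essentially the same route as the paper: the key step in both is showing $\varphi_{\mathcal{U}}A\varphi_{\mathcal{U}}^{-*}$ again satisfies $\q(Bv)=\mathrm{B}(v,Bv)$ by combining $\q$-invariance of $\varphi_{\mathcal{U}}$, the defining property of $A$ applied at $\varphi_{\mathcal{U}}^{-*}(v)$, and Lemma \ref{adjoint}, which is exactly the paper's chain of equalities written with the substitution $w=\varphi_{\mathcal{U}}^{-*}(v)$. Your explicit bijectivity argument via $\Phi_{\varphi_{\mathcal{U}}}\circ\Phi_{\psi_{\mathcal{U}}}=\Phi_{\varphi_{\mathcal{U}}\psi_{\mathcal{U}}}$ and $\Phi_{\id}=\id$ is a welcome bit of detail that the paper leaves implicit, but it does not change the substance of the argument.
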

\begin{proof}
The map $\Phi_{\varphi_{\mathcal{U}}}(A)$ is a homomorphism with respect to addition in $\mathsf{A}(\q_{\mathcal{U}},k)$ and
\begin{align*}
\q(\varphi_{\mathcal{U}} A \varphi_{\mathcal{U}}^{-*}(v)) &= \q(A \varphi_{\mathcal{U}}^{-*}(v)) \\
&= B(\varphi_{\mathcal{U}}^{-*}(v),A \varphi_{\mathcal{U}}^{-*}(v)) \\
&= B(v,\varphi_{\mathcal{U}} A \varphi_{\mathcal{U}}^{-*}(v)),
\end{align*}
by Lemma \ref{adjoint} and the fact that $\varphi_{\mathcal{U}} \in \Orth(\q_{\mathcal{U}},k)$, we have the result.
\end{proof}

\begin{prop}\label{tranv_fix_rel}
For $\varphi= \bigl[\begin{smallmatrix} \varphi_{\mathcal{U}} & \varphi_\mathcal{U}A \\ 0 & \varphi_{\mathcal{U}}^*\end{smallmatrix} \bigr]$ we have $\varphi \in \Orth(\q,k)^{\mathcal{I}_{\tau}}$ if and only if $\varphi_{\mathcal{U}} \in \Orth(\q,k)$ and $\q(v+Av) = \q(\varphi_{\mathcal{U}}^*(v))$ for $v \in \overline{ \mathcal{V}}$ a nonsingular completion of $\overline{\mathcal{U}}$ in $W$.
\end{prop}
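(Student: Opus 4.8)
The plan is to lean on Proposition \ref{tranv_fixU}, which already reduces membership in $\Orth(\q,k)^{\mathcal{I}_{\tau}}$ to the two conditions $\varphi \in \Orth(\q,k)$ and $\varphi(\ol{\mathcal{U}}) = \ol{\mathcal{U}}$. The block shape $\bigl[\begin{smallmatrix} \varphi_{\mathcal{U}} & \varphi_{\mathcal{U}}A \\ 0 & \varphi_{\mathcal{U}}^* \end{smallmatrix}\bigr]$ has a zero lower-left block, so it already encodes $\varphi(\ol{\mathcal{U}}) = \ol{\mathcal{U}}$. Hence the entire content of the proposition is to decide, for a map of this shape, exactly when $\q(\varphi(w)) = \q(w)$ holds for every $w \in W$. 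Since $W = \ol{\mathcal{U}} \oplus \ol{\mathcal{V}}$, I would write $w = u + v$ with $u \in \ol{\mathcal{U}}$ and $v \in \ol{\mathcal{V}}$ and expand both sides.

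First I would record the structural facts available in this coordinate system: $\ol{\mathcal{U}}$ is totally singular so $\mathrm{B}$ vanishes on it; the map $\varphi_{\mathcal{U}}^*$ satisfies the duality $\mathrm{B}(\varphi_{\mathcal{U}}(x), \varphi_{\mathcal{U}}^*(y)) = \mathrm{B}(x,y)$ for $x \in \ol{\mathcal{U}}$, $y \in \ol{\mathcal{V}}$, which is precisely how $\varphi_{\mathcal{U}}^*$ was produced from $\mathrm{B}(\varphi(u_i),\varphi(v_j)) = \mathrm{B}(u_i,v_j)$; and $\varphi(u+v)$ has $\ol{\mathcal{U}}$-part $\varphi_{\mathcal{U}}(u+Av)$ and $\ol{\mathcal{V}}$-part $\varphi_{\mathcal{U}}^*(v)$. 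Expanding with $\q(a+b) = \q(a) + \q(b) + \mathrm{B}(a,b)$ and collapsing the cross term through the duality relation gives
\[ \q(\varphi(u+v)) = \q(\varphi_{\mathcal{U}}(u+Av)) + \q(\varphi_{\mathcal{U}}^*(v)) + \mathrm{B}(u+Av, v). \]
Comparing with $\q(u+v) = \q(u) + \q(v) + \mathrm{B}(u,v)$ and substituting $\mathrm{B}(u,v) = \mathrm{B}(u+Av,v) + \mathrm{B}(Av,v)$ cancels the genuine cross term $\mathrm{B}(u,v)$ and leaves an identity in which the mixed pairing no longer appears.

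Setting $v = 0$ in this reduced identity isolates $\q(\varphi_{\mathcal{U}}(u)) = \q(u)$ for all $u \in \ol{\mathcal{U}}$, which is exactly $\varphi_{\mathcal{U}} \in \Orth(\q_{\mathcal{U}},k)$, establishing the first condition and its necessity. Conversely, assuming $\varphi_{\mathcal{U}} \in \Orth(\q_{\mathcal{U}},k)$, I would replace $\q(\varphi_{\mathcal{U}}(u+Av))$ by $\q(u+Av) = \q(u) + \q(Av)$, where the term $\mathrm{B}(u,Av)$ dies because $\ol{\mathcal{U}}$ is totally singular; the $\q(u)$ contributions then cancel on the two sides and the surviving identity loses all dependence on $u$. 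What remains is
\[ \q(v) = \q(Av) + \q(\varphi_{\mathcal{U}}^*(v)) + \mathrm{B}(Av,v), \]
and recognizing $\q(v) + \q(Av) + \mathrm{B}(Av,v) = \q(v+Av)$ rewrites this as $\q(v+Av) = \q(\varphi_{\mathcal{U}}^*(v))$ for all $v \in \ol{\mathcal{V}}$, the second condition.

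The computation is short, so the main thing to watch is the characteristic-$2$ bookkeeping: every sign is a plus, the duality relation must be applied to the combined vector $u + Av \in \ol{\mathcal{U}}$ rather than to $u$ alone, and one must verify that once $\varphi_{\mathcal{U}} \in \Orth(\q_{\mathcal{U}},k)$ is imposed the $u$-dependence cancels cleanly so the residual condition genuinely lives on $\ol{\mathcal{V}}$. I expect the only real subtlety to be keeping straight which terms vanish because $\mathrm{B}$ restricts to $0$ on $\ol{\mathcal{U}}$ versus which collapse through the adjoint identity for $\varphi_{\mathcal{U}}^*$.
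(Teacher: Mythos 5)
Your proof is correct and rests on exactly the same computation as the paper's: expand $\q(\varphi_{\mathcal{U}}(Av) + \varphi_{\mathcal{U}}^*(v))$, collapse the cross term via the adjoint relation $\mathrm{B}(\varphi_{\mathcal{U}}(x),\varphi_{\mathcal{U}}^*(y)) = \mathrm{B}(x,y)$, use orthogonality of $\varphi_{\mathcal{U}}$, and recognize $\q(v+Av)$. If anything, your version is more complete than the paper's, which only records the chain of equalities for $v \in \overline{\mathcal{V}}$ with both hypotheses assumed at once, whereas you reduce via Proposition \ref{tranv_fixU}, treat a general $w = u + v$, and extract each condition (setting $v=0$ to get $\varphi_{\mathcal{U}} \in \Orth(\q_{\mathcal{U}},k)$, then cancelling the $u$-dependence) so that both directions of the equivalence are genuinely established.
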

\begin{proof}
This can be shown with the following computation.  Let us compute
\begin{align*}
\q(\varphi(v)) &= \q(\varphi_{\mathcal{U}}^*(v) + \varphi_{\mathcal{U}}(Av) ) \\
\q(v) &= \q(\varphi_{\mathcal{U}}^*(v)) +\q( \varphi_{\mathcal{U}}(Av) ) + \mathrm{B}(\varphi_{\mathcal{U}}^*(v), \varphi_{\mathcal{U}}(Av) ) \\
\q(v) &= \q(\varphi_{\mathcal{U}}^*(v)) +\q( Av ) + \mathrm{B}(v, Av ) \\
\q(v) +\q( Av ) + \mathrm{B}(v, Av ) &= \q(\varphi_{\mathcal{U}}^*(v))  \\
\q(v+Av) &= \q(\varphi_{\mathcal{U}}^*(v)).
\end{align*}
\end{proof}

\begin{rmk}
We note here that it is not necessarily the case that for every $\varphi_{\mathcal{U}} \in \Orth(\q_{\mathcal{U}},k)$ that there exists an element of the form $\bigl[\begin{smallmatrix} \varphi_{\mathcal{U}} & 0 \\ 0 & \varphi_{\mathcal{U}}^*\end{smallmatrix} \bigr] \in \Orth(\q,k)^{\mathcal{I}_{\tau}}$.  It is true, however, that for every $\varphi_{\mathcal{U}} \in \Orth(\q_{\mathcal{U}},k)$ there exists $A_{\varphi}: \overline{ \mathcal{V}} \to \overline{ \mathcal{U}}$ such that  $\bigl[\begin{smallmatrix} \varphi_{\mathcal{U}} & \varphi_\mathcal{U}A_{\varphi} \\ 0 & \varphi_{\mathcal{U}}^*\end{smallmatrix} \bigr] \in \Orth(\q,k)^{\mathcal{I}_{\tau}} \subset \Orth(\q,k)$, which we state next.
\end{rmk}

\begin{lemma} \label{tranv_fix_exist}
If $\tau= \tau_{u_l} \cdots \tau_{u_2}\tau_{u_1}$ is a diagonal involution on $W$ of length $l$, $\dim (W)=2l$, and $\varphi_{\mathcal{U}} \in \Orth(\q_{\mathcal{U}},k)$, then there exists $\varphi=\bigl[\begin{smallmatrix} \varphi_{\mathcal{U}} & \varphi_\mathcal{U}A \\ 0 & \varphi_{\mathcal{U}}^*\end{smallmatrix} \bigr] \in \Orth(\q_{\mathcal{U}},k)^{\mathcal{I}_{\tau}}$.
\end{lemma}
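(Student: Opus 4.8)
The plan is to reduce the statement to the existence of the correction map $A$ and then produce it by extending $\varphi_{\mathcal{U}}$ to all of $W$. By Proposition \ref{tranv_fixU}, an element $\varphi \in \Orth(\q,k)$ lies in $\Orth(\q,k)^{\mathcal{I}_{\tau}}$ exactly when $\varphi(\overline{\mathcal{U}}) = \overline{\mathcal{U}}$; and by the discussion preceding Proposition \ref{phiUstar} together with Proposition \ref{tranv_fix_rel}, any such $\varphi$ whose restriction to $\overline{\mathcal{U}}$ equals $\varphi_{\mathcal{U}}$ automatically has the block shape $\bigl[\begin{smallmatrix} \varphi_{\mathcal{U}} & \varphi_{\mathcal{U}}A \\ 0 & \varphi_{\mathcal{U}}^*\end{smallmatrix}\bigr]$ for a unique $A \colon \overline{\mathcal{V}} \to \overline{\mathcal{U}}$ (the lower-left block vanishes because $\overline{\mathcal{U}}$ is preserved, and the lower-right block is forced to be $\varphi_{\mathcal{U}}^*$ by $\mathrm{B}$-preservation). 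So it suffices to produce an isometry $\varphi$ of $(W,\q)$ that restricts to $\varphi_{\mathcal{U}}$ on $\overline{\mathcal{U}}$ and preserves $\overline{\mathcal{U}}$; equivalently, by Proposition \ref{tranv_fix_rel}, to exhibit a linear $A$ with $\q(v+Av) = \q(\varphi_{\mathcal{U}}^{*}(v))$ for every $v \in \overline{\mathcal{V}}$.

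First I would obtain $\varphi$ conceptually. Since $\dim W = 2l$ and $\tau$ is diagonal, $(W,\q)$ is a nonsingular (hence regular) quadratic space with $W = \overline{\mathcal{U}} \oplus \overline{\mathcal{V}}$, and by hypothesis $\varphi_{\mathcal{U}}$ is an isometry of the subspace $(\overline{\mathcal{U}},\q_{\mathcal{U}})$ into $(W,\q)$. Witt's extension theorem for regular quadratic spaces then extends $\varphi_{\mathcal{U}}$ to $\varphi \in \Orth(\q,k)$ with $\varphi|_{\overline{\mathcal{U}}} = \varphi_{\mathcal{U}}$. In particular $\varphi(\overline{\mathcal{U}}) = \overline{\mathcal{U}}$, so Proposition \ref{tranv_fixU} gives $\varphi \in \Orth(\q,k)^{\mathcal{I}_{\tau}}$, and by the first paragraph $\varphi$ has the required form, yielding $A = \varphi_{\mathcal{U}}^{-1}\cdot(\text{its upper-right block})$.

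The main obstacle is justifying this extension in characteristic $2$, where $\overline{\mathcal{U}}$ is \emph{totally singular} and general Witt-type statements require care. I would resolve it by noting that nonsingularity of $W$ makes the associated bilinear form $\mathrm{B}$ nondegenerate and $\q$ regular, which is precisely the hypothesis under which the characteristic-$2$ Witt extension theorem holds. To keep the argument self-contained and avoid quoting Witt, I would alternatively build $A$ by hand from Proposition \ref{tranv_fix_rel}. Writing $A=(a_{ij})$ in the dual bases $\mathcal{U},\mathcal{V}$ (so $\mathrm{B}(u_i,v_j)=\delta_{ij}$) and using that $\overline{\mathcal{U}}$ is totally singular, the condition $\q(v+Av) = \q(\varphi_{\mathcal{U}}^{*}(v))$ for all $v$, after comparing coefficients of $t_j^2$ and of $t_jt_{j'}$, unwinds into two requirements: that $A$ be symmetric, and that $\sum_i \q(u_i)\,a_{ij}^2 + a_{jj} = f_j$ for each $j$, where $f_j = \q(\varphi_{\mathcal{U}}^{*}(v_j)) + \q(v_j)$ records the additive (totally singular type) form $f(v)=\q(\varphi_{\mathcal{U}}^{*}(v))+\q(v)$ on $\overline{\mathcal{V}}$.

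I expect the genuine difficulty to lie in solving these Artin--Schreier-type equations over $k$: a diagonal choice of $A$ reduces equation $j$ to $\q(u_j)x^2 + x = f_j$, which need not be solvable on the anisotropic part of $\overline{\mathcal{U}}$. The key resource is the isometry relation $\sum_i \q(u_i)\,p_{ij}^2 = \q(u_j)$ satisfied by $\varphi_{\mathcal{U}}=(p_{ij})$: on the indices with $\q(u_i)=0$ the equations are linear and always solvable, while the off-diagonal entries of the symmetric matrix $A$ allow one to transfer Artin--Schreier obstructions between the remaining equations, and the isometry relation is exactly what ties them together so that a consistent symmetric solution exists. This solvability is the crux of the proof, and I would organize the case analysis around the totally isotropic versus anisotropic split of $\overline{\mathcal{U}}$ supplied by Proposition \ref{hofflaghdecomp}; in effect it reproves the relevant instance of Witt's extension theorem in the coordinates of the present setup.
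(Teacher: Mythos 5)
Your opening reduction is exactly the paper's (Propositions \ref{tranv_fixU}, \ref{phiUstar}, \ref{tranv_fix_rel}), and your first route is a correct argument that is genuinely different from the paper's. Since $W$ is nonsingular, the radical of $\mathrm{B}$ is zero, and the characteristic-$2$ Witt extension theorem for nondegenerate quadratic spaces (Dieudonn\'e; Elman--Karpenko--Merkurjev) does apply to isometries between arbitrary subspaces, totally singular ones included: $\varphi_{\mathcal{U}}$ extends to some $\varphi \in \Orth(\q,k)$, which preserves $\overline{\mathcal{U}}$ and therefore lies in $\Orth(\q,k)^{\mathcal{I}_{\tau}}$ with the stated block form. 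What this buys is brevity; what it costs is an external theorem the paper never invokes, and it produces $A$ non-constructively, whereas the paper's explicit diagonal $A_{\varphi}$ is needed again immediately afterwards (Proposition \ref{AforP} and the semidirect-product theorem). If you take this route you must actually cite such a theorem; the paper's own toolkit does not contain it.

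The self-contained alternative you offer in its place, however, has a genuine gap precisely at what you yourself call the crux. You correctly unwind Proposition \ref{tranv_fix_rel} into symmetry of $A$ together with the equations $\sum_i \q(u_i)a_{ij}^2 + a_{jj} = f_j$, but the assertion that the off-diagonal entries can ``transfer Artin--Schreier obstructions'' and that the isometry relation guarantees a consistent symmetric solution is the lemma itself, restated rather than proved. It is also not the mechanism by which the difficulty disappears. The missing ingredient is structural: because $\overline{\mathcal{U}}$ is totally singular, any $\varphi_{\mathcal{U}} \in \Orth(\q_{\mathcal{U}},k)$, written in a basis adapted to Proposition \ref{hofflaghdecomp}, has the form $\bigl[\begin{smallmatrix} P_1 & P_2 \\ 0 & \id \end{smallmatrix}\bigr]$, i.e.\ it is the identity on the anisotropic part modulo isotropic vectors --- the same observation made in the proof of Proposition \ref{Oqk_radV}, and the point from which the paper's proof starts. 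Consequently $\varphi_{\mathcal{U}}^* = \bigl[\begin{smallmatrix} P_1^* & 0 \\ P_2^TP_1^* & \id \end{smallmatrix}\bigr]$, so $f_j = 0$ at every anisotropic index, while at the isotropic indices the coefficient $\q(u_j')$ vanishes and your equation collapses to the linear condition $a_{jj} = f_j$. The diagonal $A$ with $Av_j' = f_j u_j'$ on the hyperbolic part and $A = 0$ elsewhere (the paper's choice $A_1'v_j' = \q(P_2^TP_1^*v_j')P_1u_j'$, up to bookkeeping) then solves the system outright: no Artin--Schreier equation has to be solved at any index, and nothing is transferred anywhere. Without this structure theorem for isometries of totally singular spaces, your planned case analysis cannot handle the anisotropic indices at all, since there, as you note, $\q(u_j)x^2 + x = f_j$ is not solvable for a general right-hand side.
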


\begin{proof}
We know that elements $\varphi \in \Orth(\q,k)^{\mathcal{I}_{\tau}}$ leave $\overline{ \mathcal{U} }$ invariant and must be of the form
\[ \varphi= \begin{bmatrix}
			\varphi_{\mathcal{U}} & A \\
			0 & \varphi_{\mathcal{U}}^*
			\end{bmatrix}. \]
We now consider $\varphi_{\mathcal{U'}}$ where $\mathcal{U}'$ is the basis of $\overline{\mathcal{U}}$ with respect to 
\[ \q'=m\times [0,0] \perp  [\q(u'_{m+1}),\q(v'_{m+1})] \perp \ldots \perp [\q(u'_l),\q(v'_l) ] \]
and $[\q(u'_{m+1}),\q(v'_{m+1})] \perp \ldots \perp [\q(u'_l),\q(v'_l) ]$ is anisotropic, so that the Witt index of $\q$ is $m$.  Then we can write 
\[ \varphi= \begin{bmatrix}
			\varphi_{\mathcal{U'}} & A' \\
			0 & \varphi_{\mathcal{U'}}^*
			\end{bmatrix}, \]
where
\[ \varphi_{\mathcal{U'}} = \begin{bmatrix}
			P_1 & P_2 \\
			0 & \id
			\end{bmatrix} \text{ which give us } 
			\varphi_{\mathcal{U'}}^* = \begin{bmatrix}
			P_1^* & 0 \\
			P_2^TP_1^* & \id
			\end{bmatrix}. \]
Notice that we can choose $P_1 \in \GL_m(k)$ and $P_2 \in \Mat_{m,l-m}(k)$ since $P_1$ is acting on $\overline{ \{u_1',\ldots,u_m'\}}$ all of the vectors are orthogonal to one another and all have norm zero.  The map $P_2$ adds norm zero vectors to the anisotropic vectors, and again all vectors in the space $\mathcal{U}'$ are mutually orthogonal, so there are no further constraints.

Let us define $\mathcal{U'}_{\mathbb{H}} =  \{u_1',\ldots, u_m'\}$ and $\mathcal{V'}_{\mathbb{H}} =  \{v_1',\ldots, v_m'\}$, the defective subspaces of $\mathcal{U}$ and $\mathcal{V}$, where $\overline{\mathcal{U}_{\mathbb{H}}' \cup \mathcal{V}_{\mathbb{H}}'}$ forms the hyperbolic subspace of $W = \overline{ \mathcal{W}}$.

We need to establish that $A'$ exists where
\[ A' = \begin{bmatrix}
			A_1' & A_2' \\
			A_3' & A_4'
			\end{bmatrix}. \]
We can choose $A_2'=A_3'=A_4'=0$ and $A_1': \overline{\mathcal{V'}_{\mathbb{H}} } \to  \overline{ \mathcal{V'}_{\mathbb{H}}}$ such that $A_1' v'_j = \q(P_2^TP_1^*v'_j)P_1 u'_j$. First notice that for $v_j'$
\begin{align*}
\q(\varphi(v_j')) &= \q(A_1'v_j' + P_1^* v_j' + P_2^TP_1^* v_j') \\
&= \q(A_1'v_j' ) + \q(P_1^*v_j' + P_2^TP_1^* v_j') + B(A_1'v_j', P_1^* v_j' + P_2^TP_1^* v_j') \\
&= \q(P_1^*v_j') + \q( P_2^TP_1^* v_j') + B(A_1'v_j', P_1^* v_j' ) \\
&= \q( P_2^TP_1^* v_j') + B(A_1'v_j', P_1^* v_j' ) \\
&= \q( P_2^TP_1^* v_j') + B(\q(P_2^TP_1^*v'_j)P_1 u'_j, P_1^* v_j' ) \\
&=0,
\end{align*}
since $A_1'v_j', P_1^* v_j' \in \mathcal{V}_{\mathbb{H}}$ and $B(u_j',v_j') = 1$.  We use Proposition \ref{tranv_fix_rel} noting that $\varphi=\left[ \begin{smallmatrix}  \varphi_{\mathcal{U}} & A \\ 0 & \varphi_{\mathcal{U}}^* \end{smallmatrix} \right]=\left[ \begin{smallmatrix}  \varphi_{\mathcal{U}} & \varphi_{\mathcal{U}} \varphi_{\mathcal{U}}^{-1} A \\ 0 & \varphi_{\mathcal{U}}^* \end{smallmatrix} \right]$.  Now if $v_j' \in   \mathcal{V'}_{\mathbb{H}}$
\begin{align*}
\q(v_j' + \varphi_{\mathcal{U}}^{-1}(Av_j')) &= \q(v_j') + \q(\varphi_{\mathcal{U}}^{-1}(Av_j')) + \mathrm{B}(v_j', \varphi_{\mathcal{U}}^{-1}(Av_j')) \\
&= 0 + \q(\q(P_2 P_1^* v_j')u_j') + \mathrm{B}(v_j',\q(P_2 P_1^* v_j')u_j') \\
&= 0 + \q(P_2 P_1^* v_j')\mathrm{B}(v_j',\q(P_2 P_1^* v_j')u_j') \\
&=\q(P_2 P_1^* v_j') \\
&=\q(\varphi_{\mathcal{U}}^*(v_j')).
\end{align*}
If $v_j$ is a basis vector not in $\overline{ \mathcal{V'}_{\mathbb{H}}}$ then then $Av_j'=0$ and Proposition \ref{tranv_fix_rel} is trivially satisfied.
\end{proof}

We define $A_{\varphi} \in \Mat_{l,l}(k)$ as the diagonal matrix with $\q(\varphi_{\mathcal{U}}^* (v_j'))$ in the $j$-th diagonal entry $1\leq j \leq m$ and $0$ elsewhere, i.e. $A_{\varphi}v_j = \q(\varphi_{\mathcal{U}}^* (v_j'))u_j$ for $1\leq j \leq m$ and $A_{\varphi}v_j' = 0$ for $m+1 \leq j \leq l$.

\begin{prop} \label{AforP}
$A_{\varphi \theta} =  \theta_{\mathcal{U}}^{-1} A_{\varphi} \theta_{\mathcal{U}}^* + A_{\theta}$
\end{prop}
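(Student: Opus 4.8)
The plan is to obtain the relation by multiplying the two block representations and comparing the top-right block of the product against its standard form. Writing
\[
\varphi = \begin{bmatrix} \varphi_{\mathcal{U}} & \varphi_{\mathcal{U}} A_{\varphi} \\ 0 & \varphi_{\mathcal{U}}^* \end{bmatrix},
\qquad
\theta = \begin{bmatrix} \theta_{\mathcal{U}} & \theta_{\mathcal{U}} A_{\theta} \\ 0 & \theta_{\mathcal{U}}^* \end{bmatrix},
\]
both lie in $\Orth(\q,k)^{\mathcal{I}_{\tau}}$ by the discussion culminating in Proposition \ref{phiUstar}, and since the fixed point group is a group, the product $\varphi\theta$ is again of this upper-triangular shape. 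The one preliminary observation I would record is that $\psi \mapsto \psi^* = (\psi^{-1})^T$ is multiplicative on $\Orth(\q_{\mathcal{U}},k)$: since transposition and inversion are each anti-homomorphisms, $(\varphi_{\mathcal{U}}\theta_{\mathcal{U}})^* = ((\varphi_{\mathcal{U}}\theta_{\mathcal{U}})^{-1})^T = (\theta_{\mathcal{U}}^{-1}\varphi_{\mathcal{U}}^{-1})^T = \varphi_{\mathcal{U}}^*\theta_{\mathcal{U}}^*$. This pins down the diagonal blocks of $\varphi\theta$ as $(\varphi\theta)_{\mathcal{U}} = \varphi_{\mathcal{U}}\theta_{\mathcal{U}}$ and $(\varphi\theta)_{\mathcal{U}}^* = \varphi_{\mathcal{U}}^*\theta_{\mathcal{U}}^*$, so the product already sits in the expected form.

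Next I would carry out the multiplication and read off the top-right block, which is
\[
\varphi_{\mathcal{U}}(\theta_{\mathcal{U}} A_{\theta}) + (\varphi_{\mathcal{U}} A_{\varphi})\theta_{\mathcal{U}}^*
= \varphi_{\mathcal{U}}\theta_{\mathcal{U}} A_{\theta} + \varphi_{\mathcal{U}} A_{\varphi}\theta_{\mathcal{U}}^*.
\]
On the other hand, the standard form of $\varphi\theta$ forces its top-right block to equal $(\varphi\theta)_{\mathcal{U}} A_{\varphi\theta} = \varphi_{\mathcal{U}}\theta_{\mathcal{U}} A_{\varphi\theta}$. Equating the two expressions and multiplying on the left by the invertible matrix $\theta_{\mathcal{U}}^{-1}\varphi_{\mathcal{U}}^{-1} = (\varphi_{\mathcal{U}}\theta_{\mathcal{U}})^{-1}$ gives
\[
A_{\varphi\theta} = A_{\theta} + \theta_{\mathcal{U}}^{-1}\varphi_{\mathcal{U}}^{-1}\varphi_{\mathcal{U}} A_{\varphi}\theta_{\mathcal{U}}^*
= \theta_{\mathcal{U}}^{-1} A_{\varphi}\theta_{\mathcal{U}}^* + A_{\theta},
\]
which is the asserted identity.

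The computation itself is routine matrix algebra, so the step requiring the most care is the bookkeeping around what the symbol $A_{\varphi\theta}$ denotes and why it is well-defined. I would read $A_{\varphi\theta}$ as the datum recovered from the block decomposition of the group element $\varphi\theta$, namely $(\varphi\theta)_{\mathcal{U}}^{-1}$ applied to the top-right block of $\varphi\theta$; this is unambiguous precisely because $(\varphi\theta)_{\mathcal{U}} = \varphi_{\mathcal{U}}\theta_{\mathcal{U}}$ is invertible, which in turn rests on the multiplicativity of $*$ established above together with $\varphi_{\mathcal{U}},\theta_{\mathcal{U}} \in \Orth(\q_{\mathcal{U}},k)$. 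The one conceptual point worth flagging is that the assignment $\varphi_{\mathcal{U}} \mapsto A_{\varphi}$ giving the diagonal normalization need not be a homomorphism, so the proposition is best understood as the cocycle relation measuring this failure; the block multiplication above delivers exactly that relation without requiring $A_{\varphi\theta}$ to remain diagonal.
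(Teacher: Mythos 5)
There is a genuine gap, and it sits exactly at the point you flagged as ``bookkeeping'': the meaning of $A_{\varphi\theta}$. In the paper, $A_{\psi}$ is defined \emph{intrinsically} for a given $\psi_{\mathcal{U}} \in \Orth(\q_{\mathcal{U}},k)$: it is the diagonal matrix whose $j$-th diagonal entry is the norm $\q(\psi_{\mathcal{U}}^*(v_j'))$ for $1 \leq j \leq m$, and zero elsewhere. So $A_{\varphi\theta}$ denotes the diagonal matrix with entries $\q\bigl((\varphi_{\mathcal{U}}\theta_{\mathcal{U}})^*(v_j')\bigr) = \q\bigl(\varphi_{\mathcal{U}}^*\theta_{\mathcal{U}}^*(v_j')\bigr)$, defined with no reference to any block decomposition of a product. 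The content of Proposition \ref{AforP} is precisely that this norm-defined matrix coincides with the datum $\theta_{\mathcal{U}}^{-1}A_{\varphi}\theta_{\mathcal{U}}^* + A_{\theta}$ which you correctly read off from the block multiplication; equivalently, that the product of the two canonical representatives $\bigl[\begin{smallmatrix}\varphi_{\mathcal{U}} & \varphi_{\mathcal{U}}A_{\varphi} \\ 0 & \varphi_{\mathcal{U}}^*\end{smallmatrix}\bigr]$ and $\bigl[\begin{smallmatrix}\theta_{\mathcal{U}} & \theta_{\mathcal{U}}A_{\theta} \\ 0 & \theta_{\mathcal{U}}^*\end{smallmatrix}\bigr]$ is again a canonical representative, which is what yields the Corollary $(\varphi_{\mathcal{U}},0)(\theta_{\mathcal{U}},0) = (\varphi_{\mathcal{U}}\theta_{\mathcal{U}},0)$ immediately after the proposition. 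By declaring $A_{\varphi\theta}$ to be ``the datum recovered from the block decomposition of $\varphi\theta$,'' you assume exactly this coincidence, and the argument becomes circular: it shows only that \emph{if} the top-right block of the product is written as $\varphi_{\mathcal{U}}\theta_{\mathcal{U}}X$, then $X = \theta_{\mathcal{U}}^{-1}A_{\varphi}\theta_{\mathcal{U}}^* + A_{\theta}$; it does not show that $X$ is the norm-defined matrix attached to $\varphi_{\mathcal{U}}\theta_{\mathcal{U}}$. Your closing remark that the block multiplication delivers the relation ``without requiring $A_{\varphi\theta}$ to remain diagonal'' is precisely the step that cannot be waved away. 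Tellingly, your proof never evaluates $\q$ on anything, while both the definition of $A_{\psi}$ and the paper's proof consist entirely of such evaluations.

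What is actually missing is the verification that the two linear maps agree on the basis vectors $v_j'$, $1 \leq j \leq m$, and this verification has genuine arithmetic content. Writing $\varphi_{\mathcal{U}} = \bigl[\begin{smallmatrix}P_1 & P_2 \\ 0 & \id\end{smallmatrix}\bigr]$ and $\theta_{\mathcal{U}} = \bigl[\begin{smallmatrix}Q_1 & Q_2 \\ 0 & \id\end{smallmatrix}\bigr]$ as in Lemma \ref{tranv_fix_exist}, one computes $A_{\varphi\theta}v_j' = \q\bigl((P_2^TP_1^*+Q_2^T)Q_1^*v_j'\bigr)u_j'$, whereas the right-hand side requires pushing $\theta_{\mathcal{U}}^*v_j'$ through the \emph{linear} map $A_{\varphi}$ (the linear extension of its values on basis vectors) and then through $\theta_{\mathcal{U}}^{-1}$, and adding $A_{\theta}v_j'$. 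These agree because the two passes through $\theta$-type matrices reproduce exactly the squared coefficients that the quadratic form produces on the totally singular space $\overline{\mathcal{V}}$, where $\q\bigl(\sum_i c_iv_i'\bigr) = \sum_i c_i^2\,\q(v_i')$, together with the Frobenius identity $(x+y)^2 = x^2+y^2$. Already in the smallest case $l=2$, $m=1$, with $\varphi_{\mathcal{U}} = \bigl[\begin{smallmatrix}p & q \\ 0 & 1\end{smallmatrix}\bigr]$ and $\theta_{\mathcal{U}} = \bigl[\begin{smallmatrix}p' & q' \\ 0 & 1\end{smallmatrix}\bigr]$, the proposition reduces to the identity $(pq'+q)^2 = p^2q'^2 + q^2$, which holds only in characteristic $2$. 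A purely formal block computation is insensitive to the characteristic, which is a reliable sign that it is proving a different (and empty) statement rather than this one.
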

\begin{proof}
If we let 
\[ \theta_{\mathcal{U}} = \begin{bmatrix}
					Q_1 & Q_2 \\
					0 & \id
					\end{bmatrix},\]
then we have
\[\varphi = \begin{bmatrix}
			P_1 & P_2 &  P_1 A_{\varphi 1} & 0 \\
			0 & \id & 0 & 0  \\
			0 & 0 & P_1^* & 0 \\
			0 & 0 & P_2^TP_1^* & \id 
			\end{bmatrix} \text{ and }
 \ \theta = \begin{bmatrix}
			Q_1 & Q_2 &  Q_1 A_{\theta 1} & 0 \\
			0 & \id & 0 & 0  \\
			0 & 0 & Q_1^* & 0 \\
			0 & 0 & Q_2^T Q_1^* & \id 
			\end{bmatrix}.
\]
This gives us the product
\[\varphi \theta= \begin{bmatrix}
			P_1Q_1 & P_1Q_2 + P_2 &  P_1 A_{\varphi 1}Q_1 + P_1Q_1A_{\theta 1}  & 0 \\
			0 & \id & 0 & 0  \\
			0 & 0 & P_1^*Q_1^* & 0 \\
			0 & 0 & (P_2^TP_1^*+Q_2^T)Q_1^* & \id 
			\end{bmatrix} \]
and computing $(\theta_{\mathcal{U}}^{-1} A_{\varphi} \theta_{\mathcal{U}}^* + A_{\theta})v_j' $ for $1\leq j \leq m$
\begin{align*}
(\theta_{\mathcal{U}}^{-1} A_{\varphi} \theta_{\mathcal{U}}^* + A_{\theta})v_j' &= P_1Q_1(Q_1^{-1} A_{\varphi} Q_1^* + A_{\theta})v_j'   \\
&=Q_1^{-1} A_{\varphi} Q_1^*v_j' + A_{\theta}v_j'  \\
&= Q_1^{-1} \q(P_2^T P_1^* Q_1^* v_j) Q_1 u_j' + \q(Q_2^T Q_1^* v_j) u_j'  \\
&=  \q(P_2^T P_1^* Q_1^* v_j)u_j' + \q(Q_2^T Q_1^* v_j) u_j' \\
&= \q( (P_2^T P_1^* + Q_2^T)Q^*v_j ) u_j' \\
&= A_{\varphi \theta} v_j'.
\end{align*}
Notice that $(\theta_{\mathcal{U}}^{-1} A_{\varphi} \theta_{\mathcal{U}}^* + A_{\theta})v_j' =0$ for $m \leq j \leq l$.
\end{proof}

We make the identification
\[ \begin{bmatrix}
\varphi_{\mathcal{U}} & \varphi_{\mathcal{U}}A_{\varphi} \\
0 & \varphi_{\mathcal{U}}^*
\end{bmatrix} \mapsto (\varphi_{\mathcal{U}}, 0), \]
which gives us the product 
\begin{equation} \label{product_nonsing}(\varphi_{\mathcal{U}}, A)(\theta_{\mathcal{U}}, C) = (\varphi_{\mathcal{U}} \theta_{\mathcal{U}}, \theta_{\mathcal{U}}^{-1} A \theta_{\mathcal{U}}^* + C)
\end{equation}
and
\begin{equation}\label{inv_nonsing}
(\varphi_{\mathcal{U}}, A)^{-1} = (\varphi_{\mathcal{U}}^{-1}, \varphi_{\mathcal{U}} A \varphi_{\mathcal{U}}^{-*}).
\end{equation}

\begin{corollary}
$(\varphi_{\mathcal{U}}, 0 ) ( \theta_{\mathcal{U}},0) = ( \varphi_{\mathcal{U}}\theta_{\mathcal{U}}, 0)$
\end{corollary}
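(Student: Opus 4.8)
The plan is to derive this directly from the multiplication law (\ref{product_nonsing}), whose genuine content has already been packaged into Proposition \ref{AforP}; the corollary is then just the special case recording that the canonical representatives assemble into a subgroup. First I would recall that under the identification the pair $(\varphi_{\mathcal{U}},0)$ names the distinguished fixed-point element
\[ \begin{bmatrix} \varphi_{\mathcal{U}} & \varphi_{\mathcal{U}}A_{\varphi} \\ 0 & \varphi_{\mathcal{U}}^* \end{bmatrix}, \]
and likewise $(\theta_{\mathcal{U}},0)$ names the matrix whose off-diagonal block is $\theta_{\mathcal{U}}A_{\theta}$. Substituting $A=C=0$ into (\ref{product_nonsing}) gives
\[ (\varphi_{\mathcal{U}},0)(\theta_{\mathcal{U}},0) = (\varphi_{\mathcal{U}}\theta_{\mathcal{U}},\ \theta_{\mathcal{U}}^{-1}\,0\,\theta_{\mathcal{U}}^* + 0) = (\varphi_{\mathcal{U}}\theta_{\mathcal{U}},0), \]
which is exactly the assertion.

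To expose where Proposition \ref{AforP} actually enters, I would alternatively multiply the two block matrices by hand. The diagonal blocks produce $\varphi_{\mathcal{U}}\theta_{\mathcal{U}}$ and $\varphi_{\mathcal{U}}^*\theta_{\mathcal{U}}^*$; since $\varphi_{\mathcal{U}}^* = (\varphi_{\mathcal{U}}^{-1})^T$ one checks $\varphi_{\mathcal{U}}^*\theta_{\mathcal{U}}^* = (\theta_{\mathcal{U}}^{-1}\varphi_{\mathcal{U}}^{-1})^T = (\varphi_{\mathcal{U}}\theta_{\mathcal{U}})^*$, so the lower-right block is automatically in canonical form. The off-diagonal block works out to $\varphi_{\mathcal{U}}\theta_{\mathcal{U}}A_{\theta} + \varphi_{\mathcal{U}}A_{\varphi}\theta_{\mathcal{U}}^*$; factoring $\varphi_{\mathcal{U}}\theta_{\mathcal{U}}$ out on the left rewrites it as $\varphi_{\mathcal{U}}\theta_{\mathcal{U}}\bigl(A_{\theta} + \theta_{\mathcal{U}}^{-1}A_{\varphi}\theta_{\mathcal{U}}^*\bigr)$, and Proposition \ref{AforP} identifies the parenthesized factor precisely as $A_{\varphi\theta}$. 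Hence the product matrix equals
\[ \begin{bmatrix} \varphi_{\mathcal{U}}\theta_{\mathcal{U}} & \varphi_{\mathcal{U}}\theta_{\mathcal{U}}A_{\varphi\theta} \\ 0 & (\varphi_{\mathcal{U}}\theta_{\mathcal{U}})^* \end{bmatrix}, \]
the canonical representative of $\varphi_{\mathcal{U}}\theta_{\mathcal{U}}$, which again maps to $(\varphi_{\mathcal{U}}\theta_{\mathcal{U}},0)$.

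I expect essentially no obstacle here. The only point requiring care is remembering that a second coordinate of $0$ does \emph{not} signify a vanishing off-diagonal block, but rather the distinguished block $\varphi_{\mathcal{U}}A_{\varphi}$; consequently the nontrivial recombination of the two cross terms $\theta_{\mathcal{U}}A_{\theta}$ and $A_{\varphi}\theta_{\mathcal{U}}^*$ into the single correction $A_{\varphi\theta}$ is exactly the arithmetic already carried out in Proposition \ref{AforP}. Once that identity is invoked the statement is immediate, and it certifies that $\varphi_{\mathcal{U}}\mapsto(\varphi_{\mathcal{U}},0)$ is a homomorphic section splitting the semidirect product structure of $\Orth(\q,k)^{\mathcal{I}_{\tau}}$.
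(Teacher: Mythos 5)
Your proposal is correct and matches the paper's intent: the paper states this corollary without proof as an immediate consequence of the identification $\bigl[\begin{smallmatrix} \varphi_{\mathcal{U}} & \varphi_{\mathcal{U}}A_{\varphi} \\ 0 & \varphi_{\mathcal{U}}^* \end{smallmatrix}\bigr] \mapsto (\varphi_{\mathcal{U}},0)$ together with the product law (\ref{product_nonsing}), which is exactly your first derivation, and your hand computation invoking Proposition \ref{AforP} is just an unwinding of where that product law comes from. You also correctly flag the one genuine subtlety, namely that the second coordinate $0$ denotes the canonical off-diagonal block $\varphi_{\mathcal{U}}A_{\varphi}$ rather than a zero block.
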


\begin{lemma} \label{tranv_fix_form}
If $\tau$ is a diagonal involution on $W$ of length $l$ and $\dim (W)=2l$, then $\varphi \in \Orth(\q,k)^{\mathcal{I}_{\tau}}$ can be written as $(\varphi_{\mathcal{U}},A)$ where $\varphi_{\mathcal{U}} \in \Orth(\q_{\mathcal{U}},k)$ and $A \in \mathsf{A}(\q_{\mathcal{U}},k)$.
\end{lemma}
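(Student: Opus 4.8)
The plan is to exhibit the factorization by stripping off the canonical representative produced in Lemma~\ref{tranv_fix_exist} and checking that the leftover unipotent piece lands in $\mathsf{A}(\q_{\mathcal{U}},k)$. First I would record the structure already imposed on an arbitrary $\varphi \in \Orth(\q,k)^{\mathcal{I}_{\tau}}$. Because $\dim W = 2l$ and $\tau$ is diagonal of length $l$, Proposition~\ref{tranv_fixU} shows $\varphi$ stabilizes $\overline{\mathcal{U}}$, so in the basis $\mathcal{W} = \mathcal{U} \cup \mathcal{V}$ it is $\bigl[\begin{smallmatrix}\varphi_{\mathcal{U}} & \varphi_{\mathcal{U}}B \\ 0 & \varphi_{\mathcal{U}}^*\end{smallmatrix}\bigr]$ for some $B \in \mathrm{Lin}(\overline{\mathcal{V}},\overline{\mathcal{U}})$, and Proposition~\ref{tranv_fix_rel} forces $\varphi_{\mathcal{U}} \in \Orth(\q_{\mathcal{U}},k)$. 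What remains is to manufacture an $A \in \mathsf{A}(\q_{\mathcal{U}},k)$ with $\varphi = (\varphi_{\mathcal{U}},A)$ under the identification preceding~(\ref{product_nonsing}).

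Next I would invoke Lemma~\ref{tranv_fix_exist} to obtain the canonical element $\psi = \bigl[\begin{smallmatrix}\varphi_{\mathcal{U}} & \varphi_{\mathcal{U}}A_{\varphi} \\ 0 & \varphi_{\mathcal{U}}^*\end{smallmatrix}\bigr] \in \Orth(\q,k)^{\mathcal{I}_{\tau}}$, which is $(\varphi_{\mathcal{U}},0)$. Since $\Orth(\q,k)$ is a group, $\psi^{-1}\varphi$ again lies in it, and a short block computation --- using $\varphi_{\mathcal{U}}^{-1}\varphi_{\mathcal{U}} = \id$, $\varphi_{\mathcal{U}}^{-*}\varphi_{\mathcal{U}}^* = \id$, and characteristic $2$ to drop the sign --- gives $\psi^{-1}\varphi = \bigl[\begin{smallmatrix}\id & A \\ 0 & \id\end{smallmatrix}\bigr]$ with $A = B + A_{\varphi}$.

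Everything then reduces to the key claim that a map of the shape $\bigl[\begin{smallmatrix}\id & A \\ 0 & \id\end{smallmatrix}\bigr]$ belongs to $\Orth(\q,k)$ exactly when $A \in \mathsf{A}(\q_{\mathcal{U}},k)$. I would verify this by evaluating $\q$ on a spanning set: the map fixes $\overline{\mathcal{U}}$, while for $v \in \overline{\mathcal{V}}$ it sends $v \mapsto Av + v$, so $\q(Av+v) = \q(Av) + \q(v) + \mathrm{B}(v,Av)$, and the isometry condition forces $\q(Av) = \mathrm{B}(v,Av)$, i.e.\ $A \in \mathsf{A}(\q_{\mathcal{U}},k)$. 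Preservation of $\mathrm{B}$ between $\overline{\mathcal{U}}$ and $\overline{\mathcal{V}}$ is automatic because $\overline{\mathcal{U}}$ is totally singular, so the check on $\q$ over a basis is sufficient. Running $A$ back through~(\ref{product_nonsing}) as $(\varphi_{\mathcal{U}},0)(\id,A) = (\varphi_{\mathcal{U}},A)$ then yields the asserted form.

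The main obstacle is the bookkeeping of the second step rather than any deep computation: one must resist trying to prove that the raw block $B$ itself lies in $\mathsf{A}(\q_{\mathcal{U}},k)$, which is generally false, since it would force $\varphi_{\mathcal{U}}^*$ to preserve $\q_{\mathcal{V}}$ (on a hyperbolic $v_j'$ one has $\q(v_j') = 0$ while $\q(\varphi_{\mathcal{U}}^*(v_j'))$ need not vanish). The function of the canonical $A_{\varphi}$ from Lemma~\ref{tranv_fix_exist} is precisely to absorb this defect, so the care required is to confirm that the corrected quantity $A = B + A_{\varphi}$ --- and not $B$ --- is the genuine $\mathsf{A}$-coordinate in the pair notation of~(\ref{product_nonsing}).
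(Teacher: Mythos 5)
Your proposal is correct and takes essentially the same approach as the paper: both arguments strip off the canonical representative $(\varphi_{\mathcal{U}},0)$ supplied by Lemma~\ref{tranv_fix_exist}, identify the leftover unipotent block $A = M + A_{\varphi}$, and conclude $A \in \mathsf{A}(\q_{\mathcal{U}},k)$ from the isometry/norm condition of Proposition~\ref{tranv_fix_rel}. The only cosmetic difference is that you realize the factorization by left-multiplying by $(\varphi_{\mathcal{U}},0)^{-1}$ and test the resulting matrix $\bigl[\begin{smallmatrix}\id & A \\ 0 & \id\end{smallmatrix}\bigr]$ directly, whereas the paper sets $M = A_{\varphi}+A$ and verifies $\q(v+(A_{\varphi}+A)v) = \q(\varphi_{\mathcal{U}}^*(v))$; the content is identical.
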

\begin{proof}
By Proposition \ref{phiUstar} we have determined that elements in $\Orth(\q,k)^{\mathcal{I}_{\tau}}$ are of the form 
\[ \varphi= \begin{bmatrix}
\varphi_{\mathcal{U}} & \varphi_{\mathcal{U}}M \\
0 & \varphi_{\mathcal{U}}^*
\end{bmatrix} \]
and we have made the identification 
\[ \begin{bmatrix}
\varphi_{\mathcal{U}} & \varphi_{\mathcal{U}}A_{\varphi} \\
0 & \varphi_{\mathcal{U}}^*
\end{bmatrix} \mapsto (\varphi_{\mathcal{U}}, 0). \]
This gives us by (\ref{product_nonsing}) that $(\varphi,A) = (\varphi,0)(\id,A)$ such that $A \in \mathsf{A}(\q_{\mathcal{U}},k)$ and we can set $M=A_{\varphi}+A$.  For $v \in \overline{\mathcal{V}}$, a nonsingular completion of $\overline{\mathcal{U}}$, we check
\begin{align*}
\q(v + (A_{\varphi} + A)v) &= \q(v) + \q(A_{\varphi}v + Av) + \mathrm{B}(v, A_{\varphi}v) + \mathrm{B}(v,Av) \\
&= \q(v) + \q(A_{\varphi}v) + \q(Av) + \mathrm{B}(v, A_{\varphi}v) + \mathrm{B}(v,Av)\\
&= \q(v) + \q(A_{\varphi}v) + \mathrm{B}(v, A_{\varphi}v)  \\
&=\q(v+A_{\varphi}v) \\
&= \q(\varphi^*_{\mathcal{U}}(v)) \\
\end{align*}
and so, by Proposition \ref{tranv_fix_rel} we have the result.
\end{proof}

Finally we show that all elements of the fixed point group can be decomposed as the semi-direct product of the two subgroups of $\Orth(\q,k)^{\mathcal{I}_{\tau}}$ described above. 

\begin{thm}
When $\tau = \tau_{u_l} \cdots \tau_{u_2}\tau_{u_1}$ is a diagonal involution of length $l$ on a nonsingular space of dimension $2l$
\[ \Orth(\q,k)^{\mathcal{I}_{\tau}} \cong \Orth(\q_{\mathcal{U}},k) \ltimes \mathsf{A}(\q_{\mathcal{U}},k). \]
\end{thm}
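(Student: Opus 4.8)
The plan is to recognize $\Orth(\q,k)^{\mathcal{I}_{\tau}}$ as an internal semidirect product by exhibiting the two factors as subgroups, checking that the additive factor is normal, that the factors meet trivially, and that their product exhausts the group. All the analytic work has been front-loaded into the preceding lemmas, so this final step is essentially group-theoretic bookkeeping built on the coordinate description $(\varphi_{\mathcal{U}}, A)$ and the multiplication rule (\ref{product_nonsing}).

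First I would recall, from Lemma \ref{tranv_fix_form} together with the identification preceding (\ref{product_nonsing}), that every $\varphi \in \Orth(\q,k)^{\mathcal{I}_{\tau}}$ is represented uniquely as a pair $(\varphi_{\mathcal{U}}, A)$ with $\varphi_{\mathcal{U}} \in \Orth(\q_{\mathcal{U}},k)$ and $A \in \mathsf{A}(\q_{\mathcal{U}},k)$, and that the group law is given by (\ref{product_nonsing}), with inverse (\ref{inv_nonsing}) and identity $(\id, 0)$. Lemma \ref{tranv_fix_exist} guarantees that each $\varphi_{\mathcal{U}} \in \Orth(\q_{\mathcal{U}},k)$ actually occurs as a first coordinate, so the assignment $\varphi \mapsto \varphi_{\mathcal{U}}$ is surjective onto $\Orth(\q_{\mathcal{U}},k)$.

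Next I would set up the candidate factors $H = \{(\varphi_{\mathcal{U}}, 0) : \varphi_{\mathcal{U}} \in \Orth(\q_{\mathcal{U}},k)\}$ and $N = \{(\id, A) : A \in \mathsf{A}(\q_{\mathcal{U}},k)\}$. By the corollary following (\ref{inv_nonsing}), $(\varphi_{\mathcal{U}}, 0)(\theta_{\mathcal{U}}, 0) = (\varphi_{\mathcal{U}}\theta_{\mathcal{U}}, 0)$, so $H$ is a subgroup isomorphic to $\Orth(\q_{\mathcal{U}},k)$; specializing (\ref{product_nonsing}) to $\varphi_{\mathcal{U}} = \theta_{\mathcal{U}} = \id$ gives $(\id, A)(\id, C) = (\id, A+C)$, so $N$ is a subgroup isomorphic to the additive group $\mathsf{A}(\q_{\mathcal{U}},k)$. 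The intersection $H \cap N$ is visibly $\{(\id, 0)\}$, and since (\ref{product_nonsing}) yields $(\varphi_{\mathcal{U}}, 0)(\id, A) = (\varphi_{\mathcal{U}}, A)$, we get $HN = \Orth(\q,k)^{\mathcal{I}_{\tau}}$.

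Finally I would verify normality of $N$ by a direct computation with (\ref{product_nonsing}) and (\ref{inv_nonsing}):
\[ (\theta_{\mathcal{U}}, C)(\id, A)(\theta_{\mathcal{U}}, C)^{-1} = (\id, \theta_{\mathcal{U}} A \theta_{\mathcal{U}}^{-*}), \]
where the terms involving $C$ cancel in characteristic $2$, and $\theta_{\mathcal{U}} A \theta_{\mathcal{U}}^{-*} = \Phi_{\theta_{\mathcal{U}}}(A)$ lies in $\mathsf{A}(\q_{\mathcal{U}},k)$ by the lemma establishing that $\Phi_{\theta_{\mathcal{U}}}$ is an isomorphism of $\mathsf{A}(\q_{\mathcal{U}},k)$. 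Hence $N$ is normal, the action of $H$ on $N$ by conjugation is exactly $\varphi_{\mathcal{U}} \cdot A = \Phi_{\varphi_{\mathcal{U}}}(A)$, and the internal characterization of semidirect products gives $\Orth(\q,k)^{\mathcal{I}_{\tau}} \cong \Orth(\q_{\mathcal{U}},k) \ltimes \mathsf{A}(\q_{\mathcal{U}},k)$. The only genuinely delicate point is the well-definedness and uniqueness of the coordinates $(\varphi_{\mathcal{U}}, A)$ — that the identification map is a bijection compatible with matrix multiplication — but this is secured by the canonical choice of $A_{\varphi}$ together with Proposition \ref{AforP}, which is precisely the cocycle-type identity that makes (\ref{product_nonsing}) consistent; once that is in hand no further obstacle remains.
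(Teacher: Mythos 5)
Your proposal is correct and follows essentially the same route as the paper: both identify the fixed point group with pairs $(\varphi_{\mathcal{U}}, A)$ via Lemmas \ref{tranv_fix_exist} and \ref{tranv_fix_form}, factor each element as $(\varphi_{\mathcal{U}},0)(\id,C)$, and verify normality of $\mathsf{A}(\q_{\mathcal{U}},k)$ by the identical conjugation computation in which the $C$-terms cancel in characteristic $2$. Your explicit check of trivial intersection and the appeal to $\Phi_{\theta_{\mathcal{U}}}$ for closure are slightly more systematic bookkeeping of the internal semidirect product criteria, but the substance is the same.
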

\begin{proof}
By Proposition \ref{tranv_fixU} we know that $\varphi$ must leave $\overline{ \mathcal{U}}$ invariant.   By Proposition \ref{phiUstar} we know that the action of $\varphi$ leaving $\overline{\mathcal{V}}$ invariant is completely determined by $\varphi_{\mathcal{U}}$.  Lemma \ref{tranv_fix_exist} tells us that for any $\varphi_\mathcal{U} \in \Orth(\q_{\mathcal{U}}, k)$ there exists $(\varphi_{\mathcal{U}}, 0) \in \Orth(\q,k)^{\mathcal{I}_{\tau}}$.   Now consider any $(\varphi_{\mathcal{U}}, C) \in \Orth(\q,k)^{\mathcal{I}_{\tau}}$ then we can factor
\[ (\varphi_{\mathcal{U}},C) = (\varphi_{\mathcal{U}},0)( \id, C)\]
and we know $(\varphi_{\mathcal{U}}, 0), (\varphi_{\mathcal{U}},C) \in \Orth(\q,k)$ so we must have $( \id, C) \in \mathsf{A}(\q_{\mathcal{U}},k)$.

Now $\mathsf{A}(\q_{\mathcal{U}},k)$ is a normal subgroup of $\Orth(\q,k)^{\mathcal{I}_{\tau}}$, since 
\begin{align*}
(\theta_{\mathcal{U}},D)(\id, C)(\theta_{\mathcal{U}},D)^{-1} &= (\theta_{\mathcal{U}},D)(\id, C)(\theta_{\mathcal{U}}^{-1},\theta_{\mathcal{U}}D \theta_{\mathcal{U}}^{-*})  \\
&= (\theta_{\mathcal{U}}, D+C)(\theta_{\mathcal{U}}^{-1},\theta_{\mathcal{U}}D \theta_{\mathcal{U}}^{-*}) \\
&= (\theta_{\mathcal{U}}\theta_{\mathcal{U}}^{-1}, \theta_{\mathcal{U}}(D+C)\theta_{\mathcal{U}}^{-*} + \theta_{\mathcal{U}} D \theta_{\mathcal{U}}^{-*}) \\
&= (\id, \theta_{\mathcal{U}}C \theta_{\mathcal{U}}^{-*}).
\end{align*}

\end{proof}

Here we look at the fixed point group of a diagonal involution on a nonsingular vector space $W$ with $\dim(W) = 2r$ with $2l< 2r$.

\begin{prop}
\label{nonsing_diag_fixed_pts}
An element $\varphi \in \Orth(\q,k)^{\mathcal{I}_{\tau}}$ where $\tau = \tau_{\mathcal{U}}$ is a product of orthogonal transvections can be written in the form
\[ \begin{bmatrix}
	\varphi_{\mathcal{U}} & \varphi_{\mathcal{U}} C & \varphi_{\mathcal{U}} A \\
	0 & \varphi_{\mathcal{X}} & \varphi_{\mathcal{X}} D  \\
	0 & 0 &  \varphi_{\mathcal{U}}^*
	\end{bmatrix}, \]
 where $\varphi_{\mathcal{U}} \in \Orth(\q_{\mathcal{U}},k)$, $\varphi_{\mathcal{X}} \in \mathrm{Sp}(\mathrm{B}_{\mathcal{X}},k)$ with $\varphi_{\mathcal{U}}^*$ is the unique linear map such that $\mathrm{B}(\varphi_{\mathcal{U}}(u),\varphi_{\mathcal{U}}^*(v))=\mathrm{B}(u,v)$.
\end{prop}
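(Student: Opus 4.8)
The plan is to fix the decomposition $W = \overline{\mathcal{U}} \oplus \overline{\mathcal{X}} \oplus \overline{\mathcal{V}}$ underlying the block form, where $\overline{\mathcal{V}}$ is a totally singular completion of $\overline{\mathcal{U}}$ chosen so that $\mathcal{U}$ and $\mathcal{V}$ are dual with respect to $\mathrm{B}$ (as in the construction preceding Proposition \ref{phiUstar}), and $\overline{\mathcal{X}} = (\overline{\mathcal{U}} \oplus \overline{\mathcal{V}})^{\perp}$. Since $W$ is nonsingular and $\overline{\mathcal{U}} \oplus \overline{\mathcal{V}}$ is nonsingular, the complement $\overline{\mathcal{X}}$ is nonsingular and carries the symplectic form $\mathrm{B}_{\mathcal{X}}$. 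First I would invoke Proposition \ref{tranv_fixU}: because $\tau = \tau_{\mathcal{U}}$, any $\varphi \in \Orth(\q,k)^{\mathcal{I}_{\tau}}$ satisfies $\varphi(\overline{\mathcal{U}}) = \overline{\mathcal{U}}$. This immediately produces the first block column, with the $(2,1)$ and $(3,1)$ blocks zero, and identifies the $(1,1)$ block as $\varphi_{\mathcal{U}} = \varphi|_{\overline{\mathcal{U}}} \in \Orth(\q_{\mathcal{U}},k)$, since $\varphi$ preserves $\q$.

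Next I would pin down the remaining zero block. Because $\overline{\mathcal{U}}$ is totally singular the associated bilinear form is alternating on it, and the duality $\mathrm{B}(u_i,v_j) = \delta_{ij}$ together with $\overline{\mathcal{X}} \perp \overline{\mathcal{U}}$ gives $\overline{\mathcal{U}}^{\perp} = \overline{\mathcal{U}} \oplus \overline{\mathcal{X}}$. Since $\varphi$ is an isometry preserving $\overline{\mathcal{U}}$, it preserves $\overline{\mathcal{U}}^{\perp}$, so $\varphi(\overline{\mathcal{X}}) \subseteq \overline{\mathcal{U}} \oplus \overline{\mathcal{X}}$. This forces the $(3,2)$ block to vanish and leaves $\varphi$ block upper triangular with respect to the chosen decomposition.

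Then I would identify the two remaining diagonal blocks by bilinear-form bookkeeping. For the $(2,2)$ block $\varphi_{\mathcal{X}}$: writing $\varphi(x) = (\text{component in } \overline{\mathcal{U}}) + \varphi_{\mathcal{X}}(x)$ for $x \in \overline{\mathcal{X}}$ and using that $\overline{\mathcal{U}}$ is totally isotropic and orthogonal to $\overline{\mathcal{X}}$, the $\overline{\mathcal{U}}$-components pair to zero, so $\mathrm{B}_{\mathcal{X}}(\varphi_{\mathcal{X}}(x), \varphi_{\mathcal{X}}(x')) = \mathrm{B}(\varphi(x),\varphi(x')) = \mathrm{B}(x,x') = \mathrm{B}_{\mathcal{X}}(x,x')$; hence $\varphi_{\mathcal{X}} \in \Sp(\mathrm{B}_{\mathcal{X}},k)$, and it need not be orthogonal since $\varphi$ may add singular $\overline{\mathcal{U}}$-components that alter $\q$ while preserving $\mathrm{B}$. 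For the $(3,3)$ block: pairing $\varphi(u_i) = \varphi_{\mathcal{U}}(u_i) \in \overline{\mathcal{U}}$ against $\varphi(v_j)$ and discarding every term except the $\overline{\mathcal{V}}$-component (the only part not orthogonal to $\overline{\mathcal{U}}$) yields $\mathrm{B}(\varphi_{\mathcal{U}}(u_i), \varphi_{\mathcal{V}}(v_j)) = \mathrm{B}(u_i,v_j)$, which is exactly the defining relation of the adjoint, so the $(3,3)$ block equals $\varphi_{\mathcal{U}}^*$ as in Proposition \ref{phiUstar}. Finally, since $\varphi_{\mathcal{U}}$ and $\varphi_{\mathcal{X}}$ are invertible, the three superdiagonal blocks factor as $\varphi_{\mathcal{U}}C$, $\varphi_{\mathcal{U}}A$, and $\varphi_{\mathcal{X}}D$ by taking $C, A, D$ to be the appropriate preimages, giving the stated matrix.

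I expect the main obstacle to be the careful separation of components in the bilinear-form computations of the third step: with three interacting summands one must verify precisely which pairings vanish, using total isotropy of $\overline{\mathcal{U}}$ and the orthogonality $\overline{\mathcal{X}} \perp (\overline{\mathcal{U}} \oplus \overline{\mathcal{V}})$, in order to isolate $\varphi_{\mathcal{X}}$ as merely symplectic and the $(3,3)$ block as the adjoint $\varphi_{\mathcal{U}}^*$. Establishing that $\overline{\mathcal{X}}$ is nonsingular, so that $\mathrm{B}_{\mathcal{X}}$ is genuinely symplectic and $\varphi_{\mathcal{X}}$ invertible, is the structural input that legitimizes the factoring of the off-diagonal blocks.
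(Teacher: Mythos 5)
Your proof is correct and follows essentially the same route as the paper's: Proposition \ref{tranv_fixU} forces the zero blocks in the first column, and the invariance of $\overline{\mathcal{U}}^{\perp} = \overline{\mathcal{U}} \oplus \overline{\mathcal{X}}$ under $\varphi$ kills the $(3,2)$ block. The additional bilinear-form bookkeeping you carry out to identify the diagonal blocks (that $\varphi_{\mathcal{X}}$ is symplectic and the $(3,3)$ block is the adjoint $\varphi_{\mathcal{U}}^*$) is exactly what the paper delegates to the neighboring results, namely the discussion preceding Proposition \ref{phiUstar} and Proposition \ref{fix_prop_1}.
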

\begin{proof}  
The map $\varphi$ acts on $W$ using ordered basis 
\[ \mathcal{W} = \{u_1, \ldots, u_l, x_1,\ldots, x_{r-l}, y_1, \ldots, y_{r-l}, v_1, \ldots, v_l \} \]
where $\mathcal{U}$ is the inducing set for $\tau$, $\mathcal{V}$ is a nonsingular completion of $\mathcal{U}$ and $\tau(x) = x$ for all $x \in \overline{\mathcal{X}}$ such that 
\[ \mathcal{X} = \{ x_1,\ldots, x_{r-l}, y_1, \ldots, y_{r-l} \}. \]
Notice that the blocks are zero under $\varphi_{\mathcal{U}}$ in the first column since by Proposition \ref{tranv_fixU} the subspace $\overline{ \mathcal{U}}$ is left invariant by any element in $\Orth(\q,k)^{\mathcal{I}_{\tau}}$ where
\[ \tau = \tau_{\mathcal{U}} = \tau_{u_l} \cdots \tau_{u_2} \tau_{u_1}. \]
In the second column the third block is zero since $\overline{\mathcal{U}}$ is left invariant by $\varphi$ and $\overline{\mathcal{X}} \subset \overline{\mathcal{U}}^{\perp}$.  
\end{proof}

We have established $\mathrm{B}(\varphi_{\mathcal{U}}(u), \varphi_{\mathcal{U}}^*(v) ) = \mathrm{B}(u,v)$, and we will see that $C$ and $D$ have a similar relationship.  

\begin{prop} \label{fix_prop_1}
Let $\mathcal{X}$ be a basis for $\left(\overline{ \mathcal{U} \cup \mathcal{V} } \right)^{\perp}$ such that $\tau(x) = x$ for all $x\in \overline{\mathcal{X}}$.  The map $\varphi_{\mathcal{X}} : \overline{ \mathcal{X}} \to \overline{ \mathcal{X}}$ leaves the bilinear form invariant.
\end{prop}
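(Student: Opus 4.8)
The plan is to read the action of $\varphi$ on $\overline{\mathcal{X}}$ off the block form of Proposition \ref{nonsing_diag_fixed_pts} and then compute $\mathrm{B}(\varphi(x),\varphi(x'))$ directly, using that $\Orth(\q,k)\subset\Sp(\mathrm{B},k)$ on a nonsingular space. For $x\in\overline{\mathcal{X}}$, the middle column of the block matrix gives
\[ \varphi(x) = \varphi_{\mathcal{X}}(x) + \varphi_{\mathcal{U}}(Cx), \]
where $\varphi_{\mathcal{X}}(x)\in\overline{\mathcal{X}}$ and $\varphi_{\mathcal{U}}(Cx)\in\overline{\mathcal{U}}$; note in particular that the bottom (that is, $\overline{\mathcal{V}}$) block of the second column is zero, so no $\overline{\mathcal{V}}$-component appears. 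Thus $\varphi$ acts on $\overline{\mathcal{X}}$ as $\varphi_{\mathcal{X}}$ up to a correction term landing in $\overline{\mathcal{U}}$, and the goal is to show that correction contributes nothing to the bilinear form.

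First I would record two orthogonality facts. Since $\overline{\mathcal{U}}$ is totally singular, the identity $\q(u+u')=\q(u)+\q(u')+\mathrm{B}(u,u')$ forces $\mathrm{B}(u,u')=0$ for all $u,u'\in\overline{\mathcal{U}}$, so $\overline{\mathcal{U}}$ is totally isotropic for $\mathrm{B}$. Second, by hypothesis $\overline{\mathcal{X}}\subseteq(\overline{\mathcal{U}\cup\mathcal{V}})^{\perp}$, hence $\mathrm{B}(\overline{\mathcal{U}},\overline{\mathcal{X}})=0$.

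Now for $x,x'\in\overline{\mathcal{X}}$ I would expand
\[ \mathrm{B}(\varphi(x),\varphi(x')) = \mathrm{B}\bigl(\varphi_{\mathcal{X}}(x)+\varphi_{\mathcal{U}}(Cx),\ \varphi_{\mathcal{X}}(x')+\varphi_{\mathcal{U}}(Cx')\bigr) \]
by bilinearity into four terms. The term $\mathrm{B}(\varphi_{\mathcal{U}}(Cx),\varphi_{\mathcal{U}}(Cx'))$ vanishes because both arguments lie in the totally isotropic space $\overline{\mathcal{U}}$; the two mixed terms vanish because one argument lies in $\overline{\mathcal{U}}$ and the other in $\overline{\mathcal{X}}$, and these are orthogonal. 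Only $\mathrm{B}(\varphi_{\mathcal{X}}(x),\varphi_{\mathcal{X}}(x'))$ survives. On the other hand $\varphi\in\Orth(\q,k)\subset\Sp(\mathrm{B},k)$ gives $\mathrm{B}(\varphi(x),\varphi(x'))=\mathrm{B}(x,x')$, so combining the two equalities yields $\mathrm{B}(\varphi_{\mathcal{X}}(x),\varphi_{\mathcal{X}}(x'))=\mathrm{B}(x,x')$, which is exactly the asserted invariance.

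This is essentially a bookkeeping computation, so I do not expect a genuine obstacle. The only points requiring care are reading the decomposition $\varphi(x)=\varphi_{\mathcal{X}}(x)+\varphi_{\mathcal{U}}(Cx)$ correctly from the block matrix, so that the non-$\overline{\mathcal{X}}$ part lands entirely in $\overline{\mathcal{U}}$, a subspace that is simultaneously totally isotropic and orthogonal to $\overline{\mathcal{X}}$, and confirming that the block form of Proposition \ref{nonsing_diag_fixed_pts} is already in force so that no $\overline{\mathcal{V}}$-term intrudes. Both are immediate from the zero blocks in the first column and the structure of the second column.
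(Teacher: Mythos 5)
Your proof is correct and takes essentially the same route as the paper's: decompose $\varphi(x)=\varphi_{\mathcal{X}}(x)+\varphi_{\mathcal{U}}(Cx)$ from the block form, expand $\mathrm{B}(\varphi(x),\varphi(x'))$ by bilinearity, and kill every term involving $\overline{\mathcal{U}}$ using that $\overline{\mathcal{U}}$ is totally isotropic and orthogonal to $\overline{\mathcal{X}}$, then invoke invariance of $\mathrm{B}$ under $\varphi$. The paper's proof is the same computation written more tersely, justified only by the remark $\overline{\mathcal{U}}\subset\overline{\mathcal{U}}^{\perp}$.
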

\begin{proof}
Let $x,x' \in \overline{ \mathcal{X}}$, then
\begin{align*}
\mathrm{B}(x,x') &= \mathrm{B}(\varphi(x), \varphi(x')) \\
&= \mathrm{B}(\varphi_{\mathcal{X}}(x) + \varphi_{\mathcal{U}}(Cx), \varphi_{\mathcal{X}}(x') + \varphi_{\mathcal{U}}(Cx')) \\
&= \mathrm{B}(\varphi_{\mathcal{X}}(x), \varphi_{\mathcal{X}}(x'))
\end{align*}
since $\overline{ \mathcal{U}} \subset \overline{ \mathcal{U}}^{\perp}$.
\end{proof}

The group $\Orth(\q,k)^{\mathcal{I}_{\tau}}$ has a subgroup isomorphic to $\Orth(\q_{\mathcal{U}},k)$ with elements of the form
\[ \begin{bmatrix}
	\varphi_{\mathcal{U}} & 0  & \varphi_{\mathcal{U}}A_{\varphi} \\
	0 & \id  & 0 \\
	0 & 0 & \varphi_{\mathcal{U}}^*
	\end{bmatrix}. \]
	
Next, we include Theorem 14.1 from \cite{gr02}, which is used several times.  The following results lead to a description of the fixed point groups of involutions on $\Orth(\q,k)$.

\begin{thm}
\label{uniqueC}
Let $(W,\q)$ be a quadratic space with maximal nonsingular subspace $W_1$ where $2r= \mathrm{dim}(W_1)$ and $s=\mathrm{dim}(\mathrm{rad}(W))$ with $\q|_{\mathrm{rad}(W)}$ anisotropic.  If $\sigma \in \Orth(W)$ then $\sigma|_{\mathrm{rad}{(W)}} = \id_{\mathrm{rad}(W)}$, $\sigma|_{W_1} = \sigma_0 + \sigma_1$, where $\sigma_0:W_1 \to \mathrm{rad}(W)$ and $\sigma_1 \in \Sp(W_1)$.  If $\tau \in \mathrm{Sp}(W_1)$ then there exists $\sigma \in \Orth(W)$ with $\sigma_1 = \tau$ if and only if $\q(\tau(w)) + \q(w) \in \q(\mathrm{rad}(W))$ for all $w \in W_1$, moreover if $\sigma$ exists it is unique.
\end{thm}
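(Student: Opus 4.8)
The plan is to use the vector-space splitting $W = W_1 \oplus \rad(W)$ (which is orthogonal, since $\rad(W)$ lies in the perp of everything and $W_1$ is nonsingular) and to track how an isometry interacts with it, handling the four assertions in turn. First I would show $\sigma|_{\rad(W)} = \id$. Since $\mathrm{B}(w,w') = \q(w+w') + \q(w) + \q(w')$ in characteristic $2$, any $\sigma$ preserving $\q$ also preserves $\mathrm{B}$, hence preserves $\rad(W)$. On the radical $\mathrm{B}$ vanishes, so $\q|_{\rad(W)}$ is additive, and if $\sigma(g) = g'$ then $\q(g+g') = \q(g) + \q(g') = 0$; anisotropy of $\q|_{\rad(W)}$ forces $g = g'$. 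This is exactly the observation made just before Proposition \ref{Oqk_radV}.

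Next, writing $\sigma_0, \sigma_1$ for the $\rad(W)$- and $W_1$-components of $\sigma|_{W_1}$, the identity $\sigma|_{W_1} = \sigma_0 + \sigma_1$ is just the definition of the projections. For $w, w' \in W_1$ the radical parts $\sigma_0(w), \sigma_0(w')$ drop out of $\mathrm{B}(\sigma(w), \sigma(w'))$, so $\mathrm{B}(\sigma_1(w), \sigma_1(w')) = \mathrm{B}(w,w')$; nonsingularity of $\mathrm{B}|_{W_1}$ then makes $\sigma_1$ invertible and symplectic, so $\sigma_1 \in \Sp(W_1)$. The same computation gives both necessity and uniqueness at once: for $w \in W_1$, orthogonality of $\sigma_0(w) \in \rad(W)$ to $\sigma_1(w)$ yields $\q(w) = \q(\sigma(w)) = \q(\sigma_1(w)) + \q(\sigma_0(w))$, so $\q(\tau(w)) + \q(w) = \q(\sigma_0(w)) \in \q(\rad(W))$ (necessity), and since $\q|_{\rad(W)}$ is injective this pins down $\sigma_0(w)$ uniquely once $\sigma_1 = \tau$ is fixed (uniqueness).

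For sufficiency, assume $f(w) := \q(\tau(w)) + \q(w)$ lies in $\q(\rad(W))$ for all $w$. The hard part, and the step I expect to be the main obstacle, is producing a genuinely $k$-linear $\sigma_0 : W_1 \to \rad(W)$ with $\q(\sigma_0(w)) = f(w)$, since a priori the prescription only determines $\sigma_0(w)$ pointwise. The plan is to exploit that $\q|_{\rad(W)}$ is an additive, Frobenius-semilinear ($\q(ag) = a^2 \q(g)$) bijection onto the $k^2$-subspace $\q(\rad(W))$, and that $f$ enjoys the same two properties: $f(aw) = a^2 f(w)$ is immediate, while additivity of $f$ uses $\tau \in \Sp(W_1)$ so that the cross terms $\mathrm{B}(\tau(w), \tau(w'))$ and $\mathrm{B}(w,w')$ cancel in characteristic $2$. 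Setting $\sigma_0 := (\q|_{\rad(W)})^{-1} \circ f$ is then well-defined, and because the inverse of this semilinear bijection is additive and sends $a^2 t \mapsto a\,(\q|_{\rad(W)})^{-1}(t)$, the composite $\sigma_0$ is $k$-linear. Finally I would check that $\sigma = \tau + \sigma_0$, extended by $\id$ on $\rad(W)$, lies in $\Orth(W)$ by computing $\q(\sigma(w+g))$ for $w \in W_1$, $g \in \rad(W)$: all radical cross terms vanish, the two copies of $\q(\tau(w))$ cancel, and one is left with $\q(w) + \q(g) = \q(w+g)$, as required.
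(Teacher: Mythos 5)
Your proposal is correct, but there is nothing in the paper to compare it against: the paper does not prove this statement at all. It is imported verbatim as Theorem 14.1 of Grove's \emph{Classical Groups and Geometric Algebra} (the paper's reference \cite{gr02}), and the surrounding text only explains how the theorem is used, never why it is true. Your argument is a sound, self-contained reconstruction of the standard proof: anisotropy plus additivity of $\q$ on $\rad(W)$ forces $\sigma|_{\rad(W)} = \id$ and makes $\q|_{\rad(W)}$ injective, which simultaneously gives necessity and uniqueness; and your identification of the real difficulty --- showing that the pointwise-defined $\sigma_0 = (\q|_{\rad(W)})^{-1} \circ f$ is actually $k$-linear --- is exactly right, resolved correctly by observing that both $f$ (additivity via $\tau \in \Sp(W_1)$ cancelling the bilinear cross terms, and $f(aw) = a^2 f(w)$) and $\q|_{\rad(W)}$ are additive Frobenius-semilinear maps, so the composite is honestly linear. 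The only cosmetic omission is the remark that the resulting $\sigma$ is invertible (it is block triangular with $\tau$ and $\id$ on the diagonal), which is needed for $\sigma \in \GL(W)$ under the paper's definition of $\Orth(\q,k)$; this is immediate. In effect you have supplied the proof that the paper outsources, which is a useful check that the cited result holds in the generality the paper uses it (only $\q|_{\rad(W)}$ anisotropic, no perfectness or finiteness assumptions on $k$).
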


\begin{prop} \label{fix_prop_2}
$\mathrm{B}(v,Cx) = \mathrm{B}(Dv,x)$.
\end{prop}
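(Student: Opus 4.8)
The plan is to exploit the single fact that $\varphi \in \Orth(\q,k) \subset \Sp(\mathrm{B},k)$ preserves the bilinear form, combine it with the block description of $\varphi$ from Proposition \ref{nonsing_diag_fixed_pts}, and let the orthogonality relations among the three summands $\overline{\mathcal{U}}$, $\overline{\mathcal{X}}$, $\overline{\mathcal{V}}$ do all the work. Since $\overline{\mathcal{X}} = (\overline{\mathcal{U} \cup \mathcal{V}})^{\perp}$, we have $\overline{\mathcal{X}} \perp \overline{\mathcal{U}}$ and $\overline{\mathcal{X}} \perp \overline{\mathcal{V}}$; since $\overline{\mathcal{U}}$ is totally singular, $\mathrm{B}$ vanishes identically on $\overline{\mathcal{U}}$; and $\overline{\mathcal{U}}$ pairs perfectly with $\overline{\mathcal{V}}$.

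First I would fix $v \in \overline{\mathcal{V}}$ and $x \in \overline{\mathcal{X}}$ and write out their images under $\varphi$ using the block form, namely
\begin{align*}
\varphi(v) &= \varphi_{\mathcal{U}}(Av) + \varphi_{\mathcal{X}}(Dv) + \varphi_{\mathcal{U}}^*(v), \\
\varphi(x) &= \varphi_{\mathcal{U}}(Cx) + \varphi_{\mathcal{X}}(x),
\end{align*}
noting that $\varphi_{\mathcal{U}}(Av), \varphi_{\mathcal{U}}(Cx) \in \overline{\mathcal{U}}$, that $\varphi_{\mathcal{X}}(Dv), \varphi_{\mathcal{X}}(x) \in \overline{\mathcal{X}}$, and that $\varphi_{\mathcal{U}}^*(v) \in \overline{\mathcal{V}}$.

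Then I would expand $\mathrm{B}(\varphi(v), \varphi(x))$ into its six cross terms and discard the four that vanish by orthogonality: the term internal to $\overline{\mathcal{U}}$ and the two pairing $\overline{\mathcal{U}}$ against $\overline{\mathcal{X}}$ drop out, as does the term $\mathrm{B}(\varphi_{\mathcal{U}}^*(v), \varphi_{\mathcal{X}}(x))$ pairing $\overline{\mathcal{V}}$ against $\overline{\mathcal{X}}$. The two surviving terms are $\mathrm{B}(\varphi_{\mathcal{X}}(Dv), \varphi_{\mathcal{X}}(x))$, which equals $\mathrm{B}(Dv, x)$ because $\varphi_{\mathcal{X}}$ preserves the form on $\overline{\mathcal{X}}$ by Proposition \ref{fix_prop_1}, and $\mathrm{B}(\varphi_{\mathcal{U}}^*(v), \varphi_{\mathcal{U}}(Cx))$, which equals $\mathrm{B}(v, Cx)$ by symmetry of $\mathrm{B}$ together with the defining adjoint property $\mathrm{B}(\varphi_{\mathcal{U}}(u), \varphi_{\mathcal{U}}^*(v)) = \mathrm{B}(u, v)$ applied to $u = Cx \in \overline{\mathcal{U}}$. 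Hence $\mathrm{B}(\varphi(v), \varphi(x)) = \mathrm{B}(Dv, x) + \mathrm{B}(v, Cx)$. On the other side, $\varphi$ preserves $\mathrm{B}$ and $\overline{\mathcal{V}} \perp \overline{\mathcal{X}}$ forces $\mathrm{B}(\varphi(v), \varphi(x)) = \mathrm{B}(v, x) = 0$, so the two surviving terms sum to zero, and in characteristic $2$ this is precisely the claimed identity $\mathrm{B}(v, Cx) = \mathrm{B}(Dv, x)$.

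There is no deep obstacle here; the content is entirely bookkeeping. The one place to be careful is the term $\mathrm{B}(\varphi_{\mathcal{U}}^*(v), \varphi_{\mathcal{U}}(Cx))$, where I must apply the adjoint relation in the correct direction — using that $Cx$ genuinely lies in $\overline{\mathcal{U}}$ so that the identity defining $\varphi_{\mathcal{U}}^*$ is available — and to keep straight that in characteristic $2$ the vanishing of the sum $\mathrm{B}(Dv, x) + \mathrm{B}(v, Cx)$ is exactly the asserted equality rather than a sign relation.
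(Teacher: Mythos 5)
Your proof is correct and is essentially the paper's own argument: both expand $\mathrm{B}(\varphi(v),\varphi(x)) = \mathrm{B}(v,x) = 0$ using the block form of $\varphi$, kill four cross terms by the orthogonality of $\overline{\mathcal{U}}$, $\overline{\mathcal{X}}$, $\overline{\mathcal{V}}$, and identify the two survivors as $\mathrm{B}(Dv,x)$ and $\mathrm{B}(v,Cx)$ via Proposition \ref{fix_prop_1} and the adjoint property of $\varphi_{\mathcal{U}}^*$. You merely make explicit the justifications the paper leaves implicit, so there is nothing to add.
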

\begin{proof}
Consider 
\begin{align*}
0 &= \mathrm{B}(v,x) \\
&= \mathrm{B}(\varphi(v), \varphi(x)) \\
&= \mathrm{B}(\varphi_{\mathcal{U}}(Av) + \varphi_{\mathcal{U}}^*(v) + \varphi_{\mathcal{X}}(Dv), \varphi_{\mathcal{U}}(Cx) + \varphi_{\mathcal{X}}(x)) \\
&= \mathrm{B}(\varphi_{\mathcal{X}}(Dv),\varphi_{\mathcal{X}}(x)) + \mathrm{B}(\varphi_{\mathcal{U}}^*(v), \varphi_{\mathcal{U}}(Cx)) \\
&= \mathrm{B}(Dv, x) + \mathrm{B}(v, Cx).
\end{align*}
\end{proof}
 Since $D$ is completely determined by $C$ we will denote $D$ by $C^{\dagger}$.   So, we can rewrite the result in Proposition \ref{fix_prop_2}
 
 \begin{equation}
 \mathrm{B}(v,Cx) = \mathrm{B}(C^{\dagger}v,x)
 \end{equation}
 
 In fact we can choose a basis for $W$ such that $\mathrm{B}_{\mathcal{U}\mathcal{V}}: \overline{\mathcal{U}} \times \overline{\mathcal{V}} \to k$ is represented by $\id$ and $\mathrm{B}_{\mathcal{X}}: \overline{\mathcal{X}} \times \overline{\mathcal{X}} \to k$ is represented by a diagonal block matrix of the form
 \[ J = \begin{bmatrix}
 		0 & \id \\
 		\id & 0
 		\end{bmatrix} , \]
which gives us the equation $C = C^{\dagger T}J$ or in other words $C^\dagger = JC^T $.  From this we can see that $(C + C')^{\dagger} = C^{\dagger} + C'^{\dagger}$
 
We choose the basis
\[ \mathcal{W} =  \{ u_1,u_2, \ldots, u_l, x_1, x_2, \ldots, x_{r-l}, y_1, y_2, \ldots, y_{r-l
},v_1, v_2, \ldots, v_l \} \]
and write $\varphi \in \Orth(\q,k)^{\mathcal{I}_{\tau}}$ as
\[ \varphi =  \begin{bmatrix}
	\varphi_{\mathcal{U}} & \varphi_{\mathcal{U}} C & \varphi_{\mathcal{U}} A \\
	0 &  \varphi_{\mathcal{X}} & \varphi_{\mathcal{X}} C^{\dagger} \\
	0 & 0 & \varphi_{\mathcal{U}}^*  
	\end{bmatrix}. \]
The image of $x \in \overline{ \mathcal{X}}$ under $\varphi \in \Orth(\q,k)^{\mathcal{I}_{\tau}}$ gives us the following relations
\begin{align*}
\q(x) &= \q(\varphi(x)) \\
&= \q(\varphi_{\mathcal{U}}(Cx) + \varphi_{\mathcal{X}}(x)) \\
&= \q(\varphi_{\mathcal{U}}(Cx)) + \q(\varphi_{\mathcal{X}}(x)) \\
&= \q(Cx) + \q(\varphi_{\mathcal{X}}(x)).
\end{align*}
So further we see that
\[ \q(x+Cx) = \q(\varphi_{\mathcal{X}}(x)). \]

From here we begin building an argument that for every pair $(\varphi_{\mathcal{X}}, C)$ such that $\q(x+Cx) = \q(\varphi_{\mathcal{X}}(x))$, we have a linear transformation $M$ such that the triple $(\varphi_{\mathcal{U}}^*, C^{\dagger}, M)$ leaves $\q$ invariant on the image of $\overline{\mathcal{V}}$ while preserving $\mathrm{B}$ on $\varphi(W)$.

\begin{lemma} \label{tranv_Sp_M}
Let  $\q_{\mathcal{U}}$ be anisotropic and $T_{x,\omega} \in \GL(W)$ be of the form
\[  T_{x,\omega}=	\begin{bmatrix}
	\id_{\mathcal{U}} &  C & M \\
	0 &  \tau_{x,\omega} & \tau_{x,\omega} C^{\dagger} \\
	0 & 0 & \id_{\mathcal{V}}
	\end{bmatrix} \]
with $\tau_{x,\omega} \in \Sp(\mathrm{B}_{\mathcal{X}},k)$ and $\q(\tau_{x,\omega} (z) + Cz) = \q(z)$ for all $z\in \mathcal{X}$.  Then there exists an $M:\overline{\mathcal{V}} \to \overline{\mathcal{U}}$ such that $T_{x,\omega} \in \Orth(\q,k)$.
\end{lemma}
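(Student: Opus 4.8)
The plan is to verify $T_{x,\omega}\in\Orth(\q,k)$ by checking only that it preserves $\q$: since $T_{x,\omega}$ is given to lie in $\GL(W)$ and since in characteristic $2$ one has $\mathrm{B}(w,w')=\q(w+w')+\q(w)+\q(w')$, preservation of $\q$ automatically yields preservation of $\mathrm{B}$, so no separate symplectic check is needed. Writing a general vector as $w=u+z+v$ with $u\in\overline{\mathcal{U}}$, $z\in\overline{\mathcal{X}}$, $v\in\overline{\mathcal{V}}$, I would expand $\q(T_{x,\omega}(w))$ using the three standing facts that $\overline{\mathcal{U}}$ and $\overline{\mathcal{V}}$ are totally singular, that $\overline{\mathcal{X}}$ is orthogonal to $\overline{\mathcal{U}}\cup\overline{\mathcal{V}}$, and that $\mathrm{B}_{\mathcal{U}\mathcal{V}}=\id$. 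Because $T_{x,\omega}$ fixes $\overline{\mathcal{U}}$ and sends $z\mapsto\tau_{x,\omega}(z)+Cz$, the hypothesis $\q(\tau_{x,\omega}(z)+Cz)=\q(z)$ kills all terms in $z$, while Proposition~\ref{fix_prop_2} in the form $\mathrm{B}(v,Cx)=\mathrm{B}(C^{\dagger}v,x)$ cancels the remaining cross terms coming from $C$ and $C^{\dagger}$. After these cancellations (and using the hypothesis once more with $z=C^{\dagger}v$), the whole identity $\q(T_{x,\omega}(w))=\q(w)$ collapses to a single requirement on the unknown block $M$, namely
\[ \q(Mv)+\mathrm{B}(Mv,v)=\q(\tau_{x,\omega}C^{\dagger}v)\qquad\text{for all }v\in\overline{\mathcal{V}}. \]
Thus the entire lemma reduces to producing a linear map $M\colon\overline{\mathcal{V}}\to\overline{\mathcal{U}}$ solving this identity.

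For the existence of $M$ I would exploit the subspace $\overline{\mathcal{U}}^{\perp}=\overline{\mathcal{U}}\oplus\overline{\mathcal{X}}$, whose radical is exactly $\overline{\mathcal{U}}$ and on whose radical $\q$ restricts to the anisotropic form $\q_{\mathcal{U}}$; this is precisely the hypothesis of Theorem~\ref{uniqueC}. The restriction $\sigma:=T_{x,\omega}|_{\overline{\mathcal{U}}^{\perp}}$ fixes the radical pointwise, has symplectic part $\sigma_{1}=\tau_{x,\omega}\in\Sp(\mathrm{B}_{\mathcal{X}},k)$ and radical part $\sigma_{0}=C$, and the hypothesis rewritten as $\q(\tau_{x,\omega}(z))+\q(z)=\q(Cz)\in\q(\overline{\mathcal{U}})$ is exactly the compatibility condition of Theorem~\ref{uniqueC}. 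Hence $\sigma$ is an isometry of $\overline{\mathcal{U}}^{\perp}$, and it is the unique orthogonal map there with symplectic part $\tau_{x,\omega}$.

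It then remains to extend $\sigma$ across $\overline{\mathcal{V}}$ to all of $W$. As $W$ is nonsingular, the isometry $\sigma$ of the subspace $\overline{\mathcal{U}}^{\perp}$ extends to some $\widehat{\sigma}\in\Orth(\q,k)$ by the Witt extension theorem for nonsingular quadratic spaces (see \cite{gr02}). The point I would verify is that the pairing constraints force $\widehat{\sigma}$ into the triangular shape of $T_{x,\omega}$: pairing $\widehat{\sigma}(v)$ against $\widehat{\sigma}(\overline{\mathcal{U}})=\overline{\mathcal{U}}$ forces its $\overline{\mathcal{V}}$-component to be $v$ (using $\mathrm{B}_{\mathcal{U}\mathcal{V}}=\id$), and pairing against $\widehat{\sigma}(\overline{\mathcal{X}})$ forces its $\overline{\mathcal{X}}$-component to be $\tau_{x,\omega}C^{\dagger}v$ (using Proposition~\ref{fix_prop_2} and the invertibility of $\tau_{x,\omega}$). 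Hence $\widehat{\sigma}(v)=v+\tau_{x,\omega}C^{\dagger}v+Mv$ for $M$ equal to the $\overline{\mathcal{U}}$-component, so $\widehat{\sigma}=T_{x,\omega}$ and this $M$ is the desired block.

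I expect the delicate point to be exactly this last forcing step, i.e. showing that \emph{every} isometric extension of $\sigma$ is constrained into the prescribed block form so that its $\overline{\mathcal{U}}$-block genuinely solves the displayed identity. The subtlety is visible directly: in dual bases $u_{1},\dots,u_{l}$, $v_{1},\dots,v_{l}$ with $\q(u_{i})=a_{i}$, the diagonal part of the identity becomes an Artin--Schreier-type equation $a_{j}m_{jj}^{2}+m_{jj}=(\cdots)$ whose solvability over an arbitrary field is not automatic; it is the anisotropy of $\q_{\mathcal{U}}$ (equivalently the radical hypothesis of Theorem~\ref{uniqueC}, which guarantees the global extension) that makes the right-hand values admissible, which is why the assumption that $\q_{\mathcal{U}}$ be anisotropic cannot be dropped. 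This parallels, in the anisotropic setting, the explicit construction of the correction block carried out for the hyperbolic case in Lemma~\ref{tranv_fix_exist}.
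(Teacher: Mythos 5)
Your proof is correct, but it follows a genuinely different route from the paper's. The paper argues by explicit construction: using the anisotropy of $\q_{\mathcal{U}}$ together with the fact that $\tau_{x_1,\omega}$ moves only $y_1$, it first determines $C$ completely (every column of $C$ vanishes except $Cy_1=\sum_j d_{j1}u_j$, because $\q(Cz)=0$ forces $Cz=0$ in an anisotropic totally singular space), solves for $\q(x_1)$ in terms of $\omega$ and the $d_{j1}$, and then exhibits the correction block by an explicit formula, $Mv_i=\sum_j \frac{d_{i1}d_{j1}}{\omega}u_j$, checking preservation of $\q$ and of $\mathrm{B}$ by hand. You instead obtain $M$ abstractly: restrict $T_{x,\omega}$ to $\overline{\mathcal{U}}^{\perp}=\overline{\mathcal{U}}\oplus\overline{\mathcal{X}}$, where it is an isometry; extend it to all of $W$ by the Witt extension theorem for nonsingular quadratic spaces in characteristic $2$; then force any isometric extension into the prescribed upper-triangular block shape by the pairing argument, so that its $\overline{\mathcal{U}}$-block is the desired $M$. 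I checked your reduction to the identity $\q(Mv)+\mathrm{B}(Mv,v)=\q(\tau_{x,\omega}C^{\dagger}v)$, your observation that $\q$-preservation automatically gives $\mathrm{B}$-preservation in characteristic $2$ (which indeed makes the paper's final bilinear-form verification redundant), and the forcing step; all are sound. What each approach buys: the paper's proof is self-contained and produces formulas for $C$ and $M$, while yours is shorter, works uniformly for any $\varphi_{\mathcal{X}}\in\Sp(\mathrm{B}_{\mathcal{X}},k)$ satisfying the compatibility condition rather than for a single transvection, and would therefore subsume the transvection-factorization argument in the proposition that follows this lemma. One caveat: make the Witt citation precise --- the characteristic-$2$ statement you need (an isometry between subspaces of a nonsingular quadratic space extends, the radical condition being vacuous since $\mathrm{rad}(W)=0$) is in Elman--Karpenko--Merkurjev's book on quadratic forms, and is not obviously available in \cite{gr02} in this generality.

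Two small repairs. First, Theorem~\ref{uniqueC} gives existence and uniqueness of \emph{some} isometry of $\overline{\mathcal{U}}^{\perp}$ with symplectic part $\tau_{x,\omega}$; it does not by itself say that your $\sigma$, the specific map with radical part $C$, is that isometry. Either verify directly that $\sigma$ preserves $\q$ (two lines, using $\overline{\mathcal{U}}\perp\overline{\mathcal{X}}$, the vanishing of $\mathrm{B}$ on $\overline{\mathcal{U}}$, and the hypothesis), or note that the radical part $\sigma_0$ of the isometry provided by Theorem~\ref{uniqueC} satisfies $\q(\sigma_0 z)=\q(z)+\q(\tau_{x,\omega}(z))=\q(Cz)$, hence $\q(\sigma_0 z+Cz)=0$, and anisotropy forces $\sigma_0=C$. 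Second, your closing claim that the anisotropy of $\q_{\mathcal{U}}$ ``cannot be dropped'' is inconsistent with your own argument: nothing in the restriction--extension--forcing chain uses it (Witt extension needs only that $W$ is nonsingular), so your proof actually establishes the lemma without that hypothesis. Anisotropy is what the paper's computation needs in order to pin down $C$ explicitly; it is not what makes your Artin--Schreier-type diagonal equations solvable --- their solvability comes for free from the existence of the extension.
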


\begin{proof}
let $x \in \overline{ \mathcal{X}}$. To show that we have such an $M$ for the pair $(\tau_{x,\omega}, C)$, consider 
\[ 	\begin{bmatrix}
	\id &  C & M \\
	0 &  \tau_{x,\omega} & \tau_{x,\omega} C^{\dagger} \\
	0 & 0 & \id
	\end{bmatrix} \]
We can assume that $x=x_1$ is a basis vector such that there exists $y_1 \in \overline{\mathcal{X}}$ with $\mathrm{B}(x_1,y_1)=1$.

Since $C: \overline{\mathcal{X}} \to \overline{\mathcal{U}}$ and $\overline{\mathcal{U}}$ is a totally singular anisotropic subspace of $W$ we know that the image of $\q_{\mathcal{U}}$ is $k^2[\q(u_1),\q(u_2),\ldots,\q(u_l) ]$, i.e. the $k^2$ vector space spanned by the norms of the elements of $\mathcal{U}$.

Assume $\q(z) = \q( \tau_{x,\omega}(z) + Cz)$, then we have $\q(z) +  \q( \tau_{x,\omega}(z)) = \q(Cz)$ with $\q(Cz) \in k^2[\q(u_1),\q(u_2),\ldots,\q(u_l) ]$.

We extend $\{x_1,y_1\}$ to a basis $\{x_1,y_1, \dots, x_n,y_n\}$ for $\mathcal{X}$ such that
\[ \q_{\mathcal{X}} = [\q(x_1),\q(y_1)] \perp \cdots \perp [\q(x_n),\q(y_n)]. \]
We can now be more explicit about the image of $C$. If $\q(z) = \q(\tau_{x_1,\omega}(z))$, then $\q(Cz)=0$ and thus $Cz=0$ since $\overline{\mathcal{U}}$ is anisotropic.  Now suppose $\q(z) \neq \q(\tau_{x_1,\omega}(z))$ and let $c_{jj'}$ be the coefficient of $u_j$ of $Cx_{j'}$ and $d_{jj'}$ be the coefficient of $u_j$ for $Cy_{j'}$.  Since $\tau_{x_1,\omega}(x_{j'}) = x_{j'}$, $\tau_{x_1,\omega}(y_{j'}) = y_{j'}$ for $j'\neq 1$ and $\tau_{x_1,\omega}(y_{1})= y_1 + \omega x_1$ we have
\begin{align}
\displaystyle\sum_{j=1}^l c_{jj'}^2\q(u_j) &= 0 \text{ for all } j,j' \label{czero} \\
\displaystyle\sum_{j=1}^l d_{jj'}^2\q(u_j) &= 0 \text{ for all } j \text{ and all } j' \neq 1 \label{dzero} \\
\displaystyle\sum_{j=1}^l d_{j1}^2\q(u_j) &= \q(y_1) + \q(y_1 + \omega x_1) \label{dnotzero}
\end{align}
By (\ref{czero}) we have $c_{jj'}=0$ for all $j$ and $j'$ and by (\ref{dzero}) $d_{jj'}=0$ for all $j' \neq 1$.  Solving for $\q(x_1)$ in (\ref{dnotzero}) we have
\[ \q(x_1) = \dfrac{1}{\omega} + \displaystyle\sum_{j=1}^l \left( \dfrac{d_{j1}}{\omega} \right)^2 \q(u_j). \]

Now we consider the image of $v_i$  
\[ T_{x_1,\omega}(v_i) = v_i + \tau_{x_1,\omega}(C^{\dagger}v_i) + Mv_i, \]
and computing the norm of both sides of this equation we end up with
\begin{align*}
\q(v_i) &= \q(v_i) + \q(Mv_i) + \mathrm{B}(v_i,Mv_i) + \q(\tau_{x_1,\omega}(C^{\dagger}v_i) )\\
\q(Mv_i) + \mathrm{B}(v_i,Mv_i) &= \q(\tau_{x_1,\omega}(C^{\dagger}v_i) ) \\
\q(Mv_i) + \mathrm{B}(v_i,Mv_i) &= \q( C^{\dagger}v_i + \omega \mathrm{B}(x_1,C^{\dagger}v_i)x_1 ) \\
\q(Mv_i) + \mathrm{B}(v_i,Mv_i) &= \q( C^{\dagger}v_i)+ \q(\omega \mathrm{B}(x_1,C^{\dagger}v_i)x_1 ) + \mathrm{B}(C^{\dagger}v_i, \omega \mathrm{B}(x_1,C^{\dagger}v_i)x_1) \\
\q(Mv_i) + \mathrm{B}(v_i,Mv_i) &= \q(C^{\dagger}v_i ) + \omega^2\mathrm{B}(x_1,C^{\dagger}v_i)^2\q(x_1) + \omega \mathrm{B}(x_1,C^{\dagger}v_i)^2 \\
\end{align*}

Recall $C^{\dagger} = JC^T$ and we have that  
\[ C^{\dagger}v_i = \sum d_{ij'}x_{j'} + \sum c_{ij'} y_{j'} = d_{i1} x_1, \]
since all other coefficients are zero.  With the computation above we need $Mv_i$ such that
\begin{align*}
\q(Mv_i) + \mathrm{B}(v_i,Mv_i) &= \q(C^{\dagger}v_i ) + \omega^2\mathrm{B}(x_1,C^{\dagger}v_i)^2\q(x_1) + \omega \mathrm{B}(x_1,C^{\dagger}v_i)^2\\
&= \q(d_{i1}x_1 ) + \omega^2\mathrm{B}(x_1,d_{i1}x_1)^2\q(x_1) + \omega \mathrm{B}(x_1,d_{i1}x_1)^2 \\
&= \q(d_{i1}x_1 ) \\
&= \dfrac{d_{i1}^2}{\omega} + \displaystyle\sum_{j=1}^l \left(\dfrac{d_{i1}d_{j1}}{\omega}\right)^2 \q(u_j)
\end{align*}
Setting 
\[ Mv_i = \displaystyle\sum_{j=1}^l \dfrac{d_{i1}d_{j1}}{\omega} u_j, \]
we see that 
\begin{align*}
\q(Mv_i) + \mathrm{B}(v_i,Mv_i) &= \q\left(\displaystyle\sum_{j=1}^l \dfrac{d_{i1}d_{j1}}{\omega} u_j\right) + \mathrm{B}\left(v_i, \displaystyle\sum_{j=1}^l \dfrac{d_{i1}d_{j1}}{\omega} u_j \right) \\
&= \dfrac{d_{i1}^2}{\omega} + \displaystyle\sum_{j=1}^l \left(\dfrac{d_{i1}d_{j1}}{\omega}\right)^2 \q(u_j).
\end{align*}
Notice that this also preserves the bilinear form since
\begin{align*}
\mathrm{B}( T_{x_1,\omega}(v_i), T_{x_1,\omega}(v_{i'}) ) &= \mathrm{B}(v_i + \tau_{x_1,\omega}(C^{\dagger}v_i) + Mv_i,  v_{i'} + \tau_{x_1,\omega}(C^{\dagger}v_{i'}) + Mv_{i'} ) \\
&= \mathrm{B}\left( v_i, \sum \frac{d_{i'1}d_{j1}}{\omega} u_j  \right) \\ 
&+ \mathrm{B}(\tau_{x_1,\omega}(C^{\dagger}v_i), \tau_{x_1,\omega}(C^{\dagger}v_{i'}) ) + \mathrm{B}\left(\sum \frac{d_{i1}d_{j1}}{\omega} u_j, v_{i'} \right) \\
&=\dfrac{d_{i'1}d_{i1}}{\omega} \mathrm{B}(v_i,u_i) + \mathrm{B}(C^{\dagger}v_i, C^{\dagger}v_{i'}) + \dfrac{d_{i1}d_{i'1}}{\omega} \mathrm{B}(u_{i'},v_{i'}) \\
&= \dfrac{d_{i'1}d_{i1}}{\omega} + \mathrm{B}(d_{i1}x_1, d_{i'1}x_1) +  \dfrac{d_{i1}d_{i'1}}{\omega}  \\
&=0.
\end{align*}
\end{proof}

\begin{prop}
Let 
\[ \varphi = \begin{bmatrix}
 \varphi_{\mathcal{U}} & \varphi_{\mathcal{U}} C & \varphi_{\mathcal{U}} A \\
	0 &  \varphi_{\mathcal{X}} & \varphi_{\mathcal{X}} C^{\dagger} \\
	0 & 0 & \varphi_{\mathcal{U}}^*  
	\end{bmatrix} \]
such that $C: \overline{\mathcal{X}} \to \overline{\mathcal{U}}$ for $\varphi_{\mathcal{X}} \in \Sp(\mathrm{B}_{\mathcal{X}},k)$ such that $\q(x+Cx) = \q(\varphi_{\mathcal{X}})$ for all $x \in \overline{\mathcal{X}}$ then there exists an $M: \overline{\mathcal{V}} \to \overline{\mathcal{U}}$ such that 
\[ \q( \varphi(v)) = \q(\varphi_{\mathcal{U}}(Mv) +  \varphi_\mathcal{X}(C^{\dagger}v) + \varphi_{\mathcal{U}}^*(v)). \]
Moreover $\varphi \in \Orth(\q,k)$.
\end{prop}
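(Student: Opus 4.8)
The plan is to first strip off the orthogonal action on $\overline{\mathcal{U}}$, then reduce the whole problem to extending a symplectic map across the totally singular complement $\overline{\mathcal{V}}$, and finally build that extension one symplectic transvection at a time using Lemma \ref{tranv_Sp_M}. First I would reduce to the case $\varphi_{\mathcal{U}} = \id$. Recall the subgroup of $\Orth(\q,k)^{\mathcal{I}_{\tau}}$ isomorphic to $\Orth(\q_{\mathcal{U}},k)$ consisting of the elements $\bigl[\begin{smallmatrix} \varphi_{\mathcal{U}} & 0 & \varphi_{\mathcal{U}}A_{\varphi} \\ 0 & \id & 0 \\ 0 & 0 & \varphi_{\mathcal{U}}^*\end{smallmatrix}\bigr]$. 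Multiplying the given $\varphi$ on the left by the inverse of such an element turns the $(1,1)$ and $(3,3)$ blocks into $\id$, while leaving the middle block $\varphi_{\mathcal{X}}$ and --- because that subgroup has zero $(1,2)$ block --- the prescribed $C$ unchanged. Since a product with an orthogonal map is orthogonal exactly when the remaining factor is, it suffices to produce $M$ in the reduced situation $\varphi = \bigl[\begin{smallmatrix} \id & C & M \\ 0 & \varphi_{\mathcal{X}} & \varphi_{\mathcal{X}}C^{\dagger} \\ 0 & 0 & \id \end{smallmatrix}\bigr]$.

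Next I would isolate exactly which equations $M$ must satisfy. Using $\overline{\mathcal{U}}\perp\overline{\mathcal{X}}$, $\overline{\mathcal{X}}\perp\overline{\mathcal{V}}$, and the fact that $\overline{\mathcal{U}}$ and $\overline{\mathcal{V}}$ are totally singular, a direct expansion of $\mathrm{B}(\varphi(\cdot),\varphi(\cdot))$ shows that all bilinear identities among $\overline{\mathcal{U}}$, $\overline{\mathcal{X}}$, and $\overline{\mathcal{V}}$ already hold once $\varphi_{\mathcal{X}}\in\Sp(\mathrm{B}_{\mathcal{X}},k)$ and $C^{\dagger}$ is its adjoint, and that the only genuine constraints are $\q(Mv) + \mathrm{B}(Mv,v) = \q(\varphi_{\mathcal{X}}(C^{\dagger}v))$ for $v\in\overline{\mathcal{V}}$, together with $\mathrm{B}(Mv,v') + \mathrm{B}(Mv',v) = \mathrm{B}(C^{\dagger}v,C^{\dagger}v')$. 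The second identity is precisely the polarization of the first (the polarization of $v\mapsto\q(Mv)$ vanishes since $\overline{\mathcal{U}}$ is totally singular), so the entire problem collapses to finding a linear $M:\overline{\mathcal{V}}\to\overline{\mathcal{U}}$ solving $\q(Mv) + \mathrm{B}(Mv,v) = \q(\varphi_{\mathcal{X}}(C^{\dagger}v))$ into the (anisotropic) totally singular space $\overline{\mathcal{U}}$. Relative to dual bases of $\overline{\mathcal{U}}$ and $\overline{\mathcal{V}}$ this is a coupled system of Artin--Schreier type equations $\q(u_j)a_{jj}^2 + a_{jj} = (\text{data})$ for the entries of $M$.

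To solve it I would exploit the structure already in the paper. The subspace $\overline{\mathcal{U}}\oplus\overline{\mathcal{X}}$ is nondefective with anisotropic radical $\overline{\mathcal{U}}$ and maximal nonsingular part $\overline{\mathcal{X}}$, so Theorem \ref{uniqueC} applies: the hypothesis $\q(x+Cx)=\q(\varphi_{\mathcal{X}}(x))$ says exactly that $\q(\varphi_{\mathcal{X}}(x))+\q(x)=\q(Cx)\in\q(\overline{\mathcal{U}})$, and a direct check shows $x\mapsto Cx+\varphi_{\mathcal{X}}(x)$ (identity on $\overline{\mathcal{U}}$) is orthogonal on $\overline{\mathcal{U}}\oplus\overline{\mathcal{X}}$, so by the uniqueness clause of Theorem \ref{uniqueC} it is the \emph{only} orthogonal extension of $\varphi_{\mathcal{X}}$, with radical part $C$. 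I would then write $\varphi_{\mathcal{X}}=\tau_{x_t,\omega_t}\cdots\tau_{x_1,\omega_1}$ as a product of symplectic transvections, apply Lemma \ref{tranv_Sp_M} to each factor to obtain orthogonal maps $T_{x_i,\omega_i}\in\Orth(\q,k)$ of the prescribed block shape, and form $T=T_{x_t,\omega_t}\cdots T_{x_1,\omega_1}$. Each factor fixes $\overline{\mathcal{U}}$, preserves $\overline{\mathcal{U}}\oplus\overline{\mathcal{X}}$, and has $\id$ in its $(3,3)$ block, so $T$ is orthogonal with middle block $\varphi_{\mathcal{X}}$ and bottom block $\id$, and its restriction to $\overline{\mathcal{U}}\oplus\overline{\mathcal{X}}$ is an orthogonal extension of $\varphi_{\mathcal{X}}$. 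By the uniqueness just noted this restriction equals $x\mapsto Cx+\varphi_{\mathcal{X}}(x)$, forcing the $(1,2)$ block of $T$ to be the prescribed $C$; reading off the $(1,3)$ block of $T$ yields the desired $M$, and $\varphi=\bigl[\begin{smallmatrix}\varphi_{\mathcal{U}}&0&\varphi_{\mathcal{U}}A_{\varphi}\\0&\id&0\\0&0&\varphi_{\mathcal{U}}^*\end{smallmatrix}\bigr]T$ is orthogonal.

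The hard part will be the transvection factorization. Lemma \ref{tranv_Sp_M} only applies to a transvection $\tau_{x,\omega}$ that is itself orthogonally extendable, and its proof shows this forces $\omega(1+\omega\q(x))\in\q(\overline{\mathcal{U}})$, a condition an arbitrary symplectic transvection need not meet. Thus the genuine obstacle is to show that every $\varphi_{\mathcal{X}}$ satisfying the global condition of Theorem \ref{uniqueC} admits a factorization into individually extendable transvections whose radical parts accumulate (under the composition law $(T'T)_{12}=C'{+}C''\varphi'_{\mathcal{X}}$ for the block matrices) to the prescribed $C$ --- equivalently, that the Artin--Schreier system for $M$ isolated above is solvable precisely because $\q(\varphi_{\mathcal{X}}(x))+\q(x)\in\q(\overline{\mathcal{U}})$ for every $x$. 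Verifying this compatibility-driven solvability, and keeping the bookkeeping of the accumulating $C_i$ consistent with the uniqueness from Theorem \ref{uniqueC}, is where the main effort lies.
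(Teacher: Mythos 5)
Your proposal takes essentially the same route as the paper's proof: reduce modulo the subgroup $\mathsf{P}\cong\Orth(\q_{\mathcal{U}},k)$ (this is the paper's factorization (\ref{proof_PXC})), write $\varphi_{\mathcal{X}}$ as a product of symplectic transvections via O'Meara, apply Lemma \ref{tranv_Sp_M} to each factor, and invoke the uniqueness clause of Theorem \ref{uniqueC} to identify the blocks of the resulting product with the prescribed $C$, reading off $M$. The ``hard part'' you leave open --- that each transvection factor $\tau_{z_i,\omega_i}$ coming from O'Meara's factorization must \emph{individually} admit a $C_i$ with $\q(\tau_{z_i,\omega_i}(z)+C_iz)=\q(z)$, equivalently $\omega_i(1+\omega_i\q(z_i))\in\q(\overline{\mathcal{U}})$, before Lemma \ref{tranv_Sp_M} can be applied --- is precisely the point the paper also passes over in silence (its proof simply asserts that the $M_i$ exist for $1\leq i\leq N$ without verifying this hypothesis), so your outline matches the published argument both in substance and in its one unaddressed subtlety; you are right that this is where a fully rigorous version of the proof must do additional work.
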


\begin{proof}
Let us order the basis as before and consider

\begin{equation}
\label{proof_PXC}
\begin{bmatrix}
 \varphi_{\mathcal{U}} & \varphi_{\mathcal{U}} C & \varphi_{\mathcal{U}} A \\
	0 &  \varphi_{\mathcal{X}} & \varphi_{\mathcal{X}} C^{\dagger} \\
	0 & 0 & \varphi_{\mathcal{U}}^*  
	\end{bmatrix}  = \begin{bmatrix}
	 \varphi_{\mathcal{U}} & 0 & \varphi_{\mathcal{U}} (A + M) \\
	0 &  \id & 0\\
	0 & 0 & \varphi_{\mathcal{U}}^*  
	\end{bmatrix}
	\begin{bmatrix}
	\id &  C & M \\
	0 &  \varphi_{\mathcal{X}} & \varphi_{\mathcal{X}} C^{\dagger} \\
	0 & 0 & \id
	\end{bmatrix}
 \end{equation}

	First we choose $v_j$ with $1 \leq j \leq m$ an isotropic basis vector $\q(v_j) = 0$ in $\overline{\mathcal{V}}$.   We choose $Mv_j = (\q( \varphi_\mathcal{X}(C^{\dagger}v_j)  ) + \q(\varphi_{\mathcal{U}}^*(v_j))) u_j$
\begin{align*}
\q(\varphi_{\mathcal{U}}(Mv_j) +  \varphi_\mathcal{X}(C^{\dagger}v_j) + \varphi_{\mathcal{U}}^*(v_j)) &= \q(\varphi_{\mathcal{U}}(Mv_j)) + \q( \varphi_\mathcal{X}(C^{\dagger}v_j)  ) \\
&\ \hspace{1cm} + \q(\varphi_{\mathcal{U}}^*(v_j)) + \mathrm{B}(\varphi_{\mathcal{U}}(Mv_j),\varphi_{\mathcal{U}}^*(v_j))) \\
&= \q(Mv_j) + \q( \varphi_\mathcal{X}(C^{\dagger}v_j)  ) \\
&\ \hspace{2cm} + \q(\varphi_{\mathcal{U}}^*(v_j)) + \mathrm{B}(Mv_j,v_j) = 0
\end{align*}

Now let us choose $v_j$ such that $m < j \leq l$.  Then $\q(v_j)\neq 0$ and $\varphi_{\mathcal{U}}^*(v_j) = v_j$.  We can assume without loss of generality that $\overline{\mathcal{U}}$ is anisotropic, since any isotropic vector from $\overline{\mathcal{U} }$ would not contribute to the overall norm of $\varphi(v_j)$ being both norm zero and having a trivial bilinear form with all anisotropic basis vectors.

Since $\varphi_{\mathcal{X}} \in \Sp(\mathrm{B}_{\mathcal{X}},k)$ and $\mathcal{U}$ is anisotropic we have by Theoreom 2.1.9 in \cite{om78} that $\varphi_{\mathcal{X}}$ can be written as a product of symplectic transvections
\[ \varphi_{\mathcal{X}} = \tau_{x_N,\omega_N} \cdots \tau_{x_2,\omega_2} \tau_{x_1,\omega_1} \]
and by Theorem \ref{uniqueC} $C$ is unique.  So we can factor 
\[ \begin{bmatrix}
	\id &  C & M \\
	0 &  \varphi_{\mathcal{X}} & \varphi_{\mathcal{X}} C^{\dagger} \\
	0 & 0 & \id
	\end{bmatrix} = 
	\begin{bmatrix}
	\id &  C_N & M_N \\
	0 &  \tau_{z_N,\omega_N} & \tau_{z_N,\omega_N} C_N^{\dagger} \\
	0 & 0 & \id
	\end{bmatrix}\cdots 
	\begin{bmatrix}
	\id &  C_1 & M_1 \\
	0 &  \tau_{z_1,\omega_1} & \tau_{z_1,\omega_1} C_1^{\dagger} \\
	0 & 0 & \id
	\end{bmatrix} 
	 \]
where $\varphi_{\mathcal{X}} = \tau_{z_N,\omega_N} \cdots \tau_{z_2,\omega_2}\tau_{z_1,\omega_1}$.  By Lemma \ref{tranv_Sp_M} we have that $M_i$ exists for $1\leq i \leq N$.  Since each $C_i$ is unique, the product on the right hand side gives us the unique $C$ and $M$ on the left hand side.   The bilinear form is preserved by the product since it is preserved by each $T_{x_i,\omega_i}$.

By Lemma \ref{tranv_fix_exist} we can choose $A + M$ such that the first factor on the right hand side of (\ref{proof_PXC}) is an element of $\Orth(\q,k)$.  We have constructed the second factor to be an element of $\Orth(\q,k)$, and so $\varphi \in \Orth(\q,k)$.  

\end{proof}

In order to establish the isomorphism classes of the fixed point groups of involutions on nonsingular vector spaces over fields of characteristic $2$ we need a basis with the vectors organized by specific properties.  Let 
\[ \mathcal{U} = \{u_1,u_2, \ldots, u_l \} \]
 be the set of anisotropic mutually orthogonal vectors in $W$ inducing a diagonal involution $\tau_{\mathcal{U}}$.  We choose a new basis of $\overline{\mathcal{U}}$ indicating the Witt index $m$ of $\overline{\mathcal{U}}$.   We call this basis
\[ \mathcal{U}' =\{ u_1', u_2', \ldots, u_m', u_{m+1}', \ldots, u_l' \}, \]
so that $\overline{\mathcal{U}} = \overline{\mathcal{U}'}$ and $\mathrm{dim}_k(\overline{\mathcal{U}}) = l$ and $\q(u_i')=0$ for $1\leq i \leq m$.  We address the case when $\mathrm{dim}_k(\overline{\mathcal{U}}) = l-1$ at the end of this section.

Now we can choose $\mathcal{V}'$ to be the corresponding hyperbolic pairs making up the nonsingular subspace $\overline{\mathcal{U} \cup \mathcal{V}} = \overline{\mathcal{U}' \cup \mathcal{V}'} \subset W$ such that $\q(v_i') = 0$ for $1\leq i \leq m$ with $\dim(\overline{\mathcal{U} \cup \mathcal{V}}) = 2l$.  We choose a new ordered basis $\mathcal{W}'$ for $W= \overline{\mathcal{W}}$
\[ \mathcal{W} =  \{ u_1,u_2, \ldots, u_l, v_1, v_2, \ldots, v_l , x_1, x_2, \ldots, x_{r-l}, y_1, y_2, \ldots, y_{r-l} \} \]
with $\mathcal{W}' = \mathcal{U}' \cup \mathcal{X} \cup \mathcal{V}'$ with the order below
\[ \mathcal{U}' = \{u_1', u_2', \ldots u_m', u_{m+1}', \ldots, u_l' \},  \]
 \[ \mathcal{X} = \{x_1, x_2, \ldots, x_{r-l}, y_1, y_2, \ldots, y_{r-l} \} \]
 and 
 \[ \mathcal{V}'=\{v_{m+1}', \ldots, v_l', v_1', v_2', \ldots, v_m' \}. \]
 so that the isotropic vectors in $\mathcal{U}'$ and $\mathcal{V}'$ are ordered first and last, respectively.   We can represent an element in $\Orth(\q,k)^{\mathcal{I}_{\tau_{\mathcal{U}}}}$ as the matrix
 \[ P_{\varphi}X_{\varphi}C_{\varphi} = \begin{bmatrix}
	\varphi_{\mathcal{U}} &  \varphi_{\mathcal{U}} C & \varphi_{\mathcal{U}} M \\
	0 &  \varphi_{\mathcal{X}} & \varphi_{\mathcal{X}} C^{\dagger} \\
	0 & 0 & \varphi_{\mathcal{U}}^*
	\end{bmatrix} \]
	 \[ =\begin{bmatrix}
			P_1 &  P_2 &  P_1C_1+P_2C_3  & P_1C_2+P_2C_4 &  P_1M_1+P_2M_3  & P_1M_2+P_2M_4\\
			0 & \id & C_3 & C_4 & M_3 &  M_4  \\
			0 & 0 & X_1 & X_2 & X_1C_4^T + X_2C_3^T &  X_1C_2^T + X_2C_1^T  \\
			0 & 0 & X_3 & X_4 & X_3C_4^T + X_4C_3^T &  X_3C_2^T + X_4C_1^T  \\
			0& 0 & 0 & 0 & \id & P_2^T P_1^* \\
			0& 0 & 0 & 0 & 0 &  P_1^* \\
			\end{bmatrix}
	\]

\begin{align} P_{\varphi} &=\begin{bmatrix}
			P_1 &  P_2 &  0  & 0 &  0  & A_{\varphi} \\
			0 & \id & 0 & 0 & 0 &  0 \\
			0 & 0 & \id &  0 & 0 &  0 \\
			0 & 0 & 0 & \id & 0 &  0  \\
			0& 0 & 0 & 0 & \id & P_2^T P_1^* \\
			0& 0 & 0 & 0 & 0 &  P_1^* \\
			\end{bmatrix} \label{P}  \\			
        X_{\varphi} &= \begin{bmatrix}
			\id  &  0 &  0  & 0 &  0  & 0 \\
			0 & \id & C_3 & C_4 & M_{\varphi} &  0 \\
			0 & 0 & X_1 &  X_2 & X_1C_4^T + X_2C_3^T &  0 \\
			0 & 0 & X_3 & X_4 & X_3C_4^T + X_4C_3^T &  0  \\
			0& 0 & 0 & 0 & \id & 0 \\
			0& 0 & 0 & 0 & 0 &  \id \\
			\end{bmatrix} \label{X} \\
	 C_{\varphi} &= \begin{bmatrix}
			\id  &  0 &  C_1  & C_2 &  M_1  & M_2 \\
			0 & \id & 0 & 0 & 0 &  M_4 \\
			0 & 0 & \id &  0  & 0 &  C_2^T \\
			0 & 0 & 0 & \id  & 0 &  C_1^T  \\
			0& 0 & 0 & 0 & \id & 0 \\
			0& 0 & 0 & 0 & 0 &  \id \\
			\end{bmatrix} \label{C}
\end{align}

We use $\mathcal{U}_{is}$ to denote the isotropic vectors in $\mathcal{U}$ and $\mathcal{U}_{an}$ to denote the anisotropic vectors in $\mathcal{U}$.
			
\begin{prop}
If $\varphi \in \Orth(\q,k)^{\mathcal{I}_{\tau}}$ where $\tau= \tau_{\mathcal{U}}$ then $\varphi = P_{\varphi}X_{\varphi}C_{\varphi}$ from (\ref{P}),(\ref{X}) and (\ref{C}).
\end{prop}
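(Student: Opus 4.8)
The plan is to read the statement as an explicit factorization of the already-determined block form of $\varphi$ and to verify it by direct block multiplication. By Proposition \ref{nonsing_diag_fixed_pts} together with Propositions \ref{phiUstar} and \ref{fix_prop_2}, any $\varphi \in \Orth(\q,k)^{\mathcal{I}_{\tau}}$ with $\tau = \tau_{\mathcal{U}}$ is, in the ordered basis $\mathcal{W}$, the $6\times 6$ block matrix displayed above, with $\varphi_{\mathcal{U}} = \bigl[\begin{smallmatrix} P_1 & P_2 \\ 0 & \id\end{smallmatrix}\bigr] \in \Orth(\q_{\mathcal{U}},k)$, $\varphi_{\mathcal{X}} = \bigl[\begin{smallmatrix} X_1 & X_2 \\ X_3 & X_4\end{smallmatrix}\bigr] \in \Sp(\mathrm{B}_{\mathcal{X}},k)$, $\varphi_{\mathcal{V}} = \varphi_{\mathcal{U}}^*$, and the relation $C^{\dagger} = JC^{T}$ built in. Thus the coordinates $P_1,P_2,X_i,C_j,M_j$ appearing in the three target matrices are all read off from $\varphi$ itself; the content of the proposition is precisely that $\varphi$ splits as the product of these three factors, each lying in a distinguished subgroup.

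First I would check that each factor is a genuine element of $\Orth(\q,k)^{\mathcal{I}_{\tau}}$, so that the factorization lives inside the group. The matrix $P_{\varphi}$ is the image of $\varphi_{\mathcal{U}}$ under the embedding $\Orth(\q_{\mathcal{U}},k)\hookrightarrow \Orth(\q,k)^{\mathcal{I}_{\tau}}$ supplied by Lemma \ref{tranv_fix_exist}, acting as the identity on $\overline{\mathcal{X}}$; this is exactly what produces the corner block $A_{\varphi}$. The matrix $X_{\varphi}$ is built from $\varphi_{\mathcal{X}} \in \Sp(\mathrm{B}_{\mathcal{X}},k)$ together with the part $C_3,C_4$ of $C$ landing in $\overline{\mathcal{U}_{\mathrm{an}}}$: applying Lemma \ref{tranv_Sp_M} (and the subsequent Proposition) to a factorization of $\varphi_{\mathcal{X}}$ into symplectic transvections yields the compensating block $M_{\varphi}$ that makes $X_{\varphi}$ preserve $\q$. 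The matrix $C_{\varphi}$ is a unipotent element carrying the remaining data $C_1,C_2$ (the part of $C$ landing in $\overline{\mathcal{U}_{\mathrm{is}}}$) together with $M_1,M_2,M_4$, and the constraint $\q(x+Cx) = \q(\varphi_{\mathcal{X}}(x))$ guarantees it too lies in $\Orth(\q,k)$.

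Next I would carry out the block product $P_{\varphi}X_{\varphi}C_{\varphi}$. Multiplying $X_{\varphi}C_{\varphi}$ first reassembles the full $C$-block (recombining $C_1,C_2$ from $C_{\varphi}$ with $C_3,C_4$ from $X_{\varphi}$), the full $C^{\dagger}$-block via $C^{\dagger}=JC^{T}$, and the symplectic block $X$; left-multiplying by $P_{\varphi}$ then reinserts the leading factors $\varphi_{\mathcal{U}}$ (top-left) and $\varphi_{\mathcal{U}}^*$ (bottom-right) in each column, reproducing entries such as $P_1C_1+P_2C_3$ and $X_1C_4^{T}+X_2C_3^{T}$ exactly as in the displayed form of $\varphi$. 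The delicate part of the computation is matching the top-right $M$-block: the total $\varphi_{\mathcal{U}}M$ of $\varphi$ must equal the sum of the three separately constructed pieces $A_{\varphi}$, $M_{\varphi}$, and the $M_1,M_2,M_4$ contribution, after accounting for the left-action of $P_1,P_2$ and of $X$.

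The main obstacle I anticipate is precisely this reconciliation of the $M$-block. Each of the three factors contributes to the upper-right corner, and one must confirm that the individually constructed compensating maps add up to the actual $M$-block of $\varphi$ rather than merely to \emph{some} element of $\Orth(\q,k)$. Here Theorem \ref{uniqueC} is decisive: once $\varphi_{\mathcal{U}}$, $\varphi_{\mathcal{X}}$, and $C$ are fixed, the only remaining freedom in an element of $\Orth(\q,k)$ restricting correctly is the additive group $\mathsf{A}(\q_{\mathcal{U}},k)$. Consequently the discrepancy between the assembled product $P_{\varphi}X_{\varphi}C_{\varphi}$ and $\varphi$ lies in $\mathsf{A}(\q_{\mathcal{U}},k)$ and can be absorbed into the choice of $A_{\varphi}$ in $P_{\varphi}$; verifying this absorption closes the argument.
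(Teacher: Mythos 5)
Your skeleton is the same as the paper's: take $P_{\varphi}$ from the embedding of Lemma \ref{tranv_fix_exist}, take $X_{\varphi}$ from Lemma \ref{tranv_Sp_M} together with the uniqueness in Theorem \ref{uniqueC}, multiply out in the $6\times 6$ block form, and let a unipotent third factor carry the rest. But the two places you yourself flag as delicate are exactly where your execution breaks. First, your justification that $C_{\varphi} \in \Orth(\q,k)$ is misattributed: the condition $\q(x+Cx) = \q(\varphi_{\mathcal{X}}(x))$ constrains the pair $(\varphi_{\mathcal{X}}, C)$, and the parts of that pair that interact with the norm (namely $\varphi_{\mathcal{X}}$ with $C_3, C_4$) sit inside $X_{\varphi}$, not inside $C_{\varphi}$. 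For $C_{\varphi}$ alone to preserve $\q$ one needs relations among $C_1, C_2, M_1, M_2, M_4$ (such as $M_1 = M_4^T$ and $M_2 + M_2^T = C_1C_2^T + C_2C_1^T$); these are only derived in Proposition \ref{groupC}, which comes \emph{after} this proposition, and are derived there for elements already known to lie in $\Orth(\q,k)$. Checking them from scratch for the blocks read off $\varphi$ would amount to redoing the whole argument. The paper never verifies $C_{\varphi} \in \Orth(\q,k)$ directly: it observes that $P_{\varphi}, X_{\varphi}$ and $\varphi$ are all orthogonal, so the quotient $C_{\varphi} = X_{\varphi}^{-1}P_{\varphi}^{-1}\varphi$ is orthogonal for free, and all that remains is to confirm its block shape is that of (\ref{C}).

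Second, the final absorption cannot go into $A_{\varphi}$. That block is not a free parameter: it is the \emph{unique} map making $P_{\varphi}$ orthogonal for the given $\varphi_{\mathcal{U}}$ (this uniqueness is the content of the definition following Lemma \ref{tranv_fix_exist} and is restated in the paper's proof), and it occupies only the block sending $\overline{\mathcal{V}_{\mathrm{is}}}$ into $\overline{\mathcal{U}_{\mathrm{is}}}$. If you multiply $P_{\varphi}X_{\varphi}C_{\varphi}$ with the entries of $C_{\varphi}$ taken literally from $\varphi$, the discrepancy with $\varphi$ contains the term $C_3C_2^T + C_4C_1^T$ sitting in the block that maps $\overline{\mathcal{V}_{\mathrm{is}}}$ into $\overline{\mathcal{U}_{\mathrm{an}}}$ (row of $\mathcal{U}_{\mathrm{an}}$, column of $\mathcal{V}_{\mathrm{is}}$), which no choice of $A_{\varphi}$ can reach, in addition to an $A_{\varphi}$ term in the top corner; and altering the corner of $P_{\varphi}$ would destroy its orthogonality anyway. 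The residual freedom is the group later named $\mathsf{C} = \mathsf{A}(\q_{\mathcal{U}},k)$, so the discrepancy must be absorbed into the $M_2$ and $M_4$ blocks of $C_{\varphi}$ — which is exactly what happens automatically if you define $C_{\varphi} := X_{\varphi}^{-1}P_{\varphi}^{-1}\varphi$ as the paper does, rather than trying to build it entrywise and reconcile afterwards.
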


\begin{proof}
The matrix $P_{\varphi}$ is isomorphic to an element in $\Orth(\q_{\mathcal{U}},k)$ with $A_{\varphi}$ unique.  The matrix $X_{\varphi}$ is isomorphic to an element in $\Orth(\q_{\mathcal{U}_{\mathrm{an}} \cup X}, k) \cong H \subset \Sp(\mathrm{B}_{\mathcal{X}},k)$, where $[C_3 | C_4]$ is the unique linear map from Theorem 14.1 in \cite{gr02}.  The maps $C_1, C_2: \overline{X} \to \overline{\mathcal{U}}_{is}$ have isotropic images in $\mathrm{rad}(\overline{\mathcal{U} \cup \mathcal{X}}) = \overline{\mathcal{U}}$ and so have zero norm and zero bilinear form on $\overline{\mathcal{U} \cup \mathcal{X}}$.  Since $P_{\varphi},X_{\varphi} \in \Orth(\q,k)$ and $\varphi \in \Orth(\q,k)^{\mathcal{I}_{\tau}}$ then $C_{\varphi} \in \Orth(\q,k)$ and $P_{\varphi}X_{\varphi}C_{\varphi} = \varphi$ which leaves $\overline{\mathcal{U}}$ invariant.
\end{proof}

We show that each of the matrices $P_{\varphi},X_{\varphi}$ and $C_{\varphi}$ corresponds to a subgroup of $\Orth(\q,k)^{\mathcal{I}_{\tau}}$, and we denote these subgroups by $\mathsf{P}, \mathsf{X}$ and $\mathsf{C}$ respectively.  Next we see what further restrictions are necessary for $\mathsf{P},\mathsf{C}$ and $\mathsf{X}$ to be subgroups of $\Orth(\q,k)$.  

For the following characterizations of these subgroups we assume that the basis for $\overline{\mathcal{U} \cup \mathcal{V}}$ has the following order
\begin{equation}
 \{u_1', u_2', \ldots u_m', u_{m+1}', \ldots, u_l' , v_{m+1}', \ldots, v_l', v_1', v_2', \ldots, v_m'\}
 \label{orderedbasis}
  \end{equation}
where $m$ is the Witt index of $\overline{\mathcal{U} \cup \mathcal{V}}$ and $\q(u_i') = \q(v_i') = 0$ for $1 \leq i \leq m$.

\begin{prop} \label{groupC}
An element $C_{\varphi} \in \mathsf{C} \subset \Orth(\q,k)$ has $C_1, C_2 \in \Mat_{m,s-l}(k)$ and $M_1 \in \Mat_{m,l-m}(k)$, $M_4=M_1^T$  and $M_2 \in \Mat_{m,m}(k)$ such that 
\[ M_2 + M_2^T = C_1C_2^T + C_2C_1^T. \]
\end{prop}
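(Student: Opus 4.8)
The plan is to exploit the factorization from the preceding proposition. Since $P_{\varphi}, X_{\varphi} \in \Orth(\q,k)$ and $C_{\varphi} = X_{\varphi}^{-1} P_{\varphi}^{-1} \varphi$ with $\varphi \in \Orth(\q,k)^{\mathcal{I}_{\tau}}$, the matrix $C_{\varphi}$ already lies in $\Orth(\q,k)$; the entire content of the proposition is therefore to translate the single requirement ``$C_{\varphi}$ preserves $\q$'' into relations among the blocks $C_1, C_2, M_1, M_2, M_4$. I would first record the bilinear data attached to the ordered basis (\ref{orderedbasis}): the span $\overline{\mathcal{U}}$ is totally singular so $\mathrm{B}$ vanishes on it; $\overline{\mathcal{X}}$ is orthogonal to $\overline{\mathcal{U}\cup\mathcal{V}}$; $\mathrm{B}(u_i', v_j') = \delta_{ij}$; and $\mathrm{B}_{\mathcal{X}}$ is given by $J$. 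Because $\q$ is determined by its values on a basis together with $\mathrm{B}$ on pairs, it suffices to impose $\q(C_{\varphi} w) = \q(w)$ on each basis vector and $\mathrm{B}(C_{\varphi}w, C_{\varphi}w') = \mathrm{B}(w,w')$ on each pair.

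Reading the columns of (\ref{C}) off directly, $C_{\varphi}$ fixes every $u_i'$, sends $x_j \mapsto x_j + C_1 x_j$ and $y_j \mapsto y_j + C_2 y_j$ with the corrections in $\overline{\mathcal{U}}_{\mathrm{is}}$, sends each anisotropic $v_j'$ to $v_j' + M_1 v_j'$, and sends each isotropic $v_i'$ to $v_i' + M_2 v_i' + M_4 v_i' + C_2^{T} v_i' + C_1^{T} v_i'$, the four corrections lying in $\overline{\mathcal{U}}_{\mathrm{is}}$, $\overline{\mathcal{U}}_{\mathrm{an}}$, and the two halves of $\overline{\mathcal{X}}$ respectively. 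The first family of checks then costs nothing: $\q$ is preserved on the $u_i'$ trivially, and on $x_j$, $y_j$, and the anisotropic $v_j'$ automatically, since each correction lies in the totally isotropic $\overline{\mathcal{U}}_{\mathrm{is}}$ and is orthogonal to the vector it is added to. No constraint arises here, and the block sizes recorded in the statement are exactly the domains and codomains of these correction maps ($C_1, C_2$ carry the halves of $\overline{\mathcal{X}}$ into the $m$-dimensional $\overline{\mathcal{U}}_{\mathrm{is}}$, $M_1$ into $\overline{\mathcal{U}}_{\mathrm{is}}$ from the $(l-m)$-dimensional anisotropic $v$-block, and $M_2$ on the $m$-dimensional isotropic $v$-block).

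The two substantive relations come from pairing isotropic $v$-vectors. Evaluating $\mathrm{B}(C_{\varphi}v_i', C_{\varphi}v_j') = \mathrm{B}(v_i', v_j') = 0$ for $i \le m < j$, every cross term vanishes except the pairing of the $\overline{\mathcal{U}}_{\mathrm{an}}$-component $M_4 v_i'$ against $v_j'$ and of $v_i'$ against the $\overline{\mathcal{U}}_{\mathrm{is}}$-component $M_1 v_j'$, which together give $(M_4)_{ji} = (M_1)_{ij}$, that is, $M_4 = M_1^{T}$. Evaluating $\mathrm{B}(C_{\varphi}v_i', C_{\varphi}v_{i'}') = 0$ for $i, i' \le m$, the surviving terms are the two $\overline{\mathcal{U}}_{\mathrm{is}}$-against-$v'$ pairings, contributing $(M_2)_{ii'} + (M_2)_{i'i}$, together with the two cross-pairings of the $x$- and $y$-halves of $\overline{\mathcal{X}}$; using the adjoint identity $C^{\dagger} = JC^{T}$ from Proposition \ref{fix_prop_2} to identify the $x$- and $y$-coordinates of $C_{\varphi}v_i'$ with the rows of $C_2$ and $C_1$, these assemble into $(C_1 C_2^{T} + C_2 C_1^{T})_{ii'}$, yielding the matrix identity $M_2 + M_2^{T} = C_1 C_2^{T} + C_2 C_1^{T}$.

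I expect the main obstacle to be precisely this last bookkeeping: keeping the transpose conventions of $C^{\dagger} = JC^{T}$ consistent so that the cross-pairings of the two halves of $\overline{\mathcal{X}}$ aggregate into a symmetric expression with the correct indices, while simultaneously confirming that every other term in the two expansions vanishes by total singularity of $\overline{\mathcal{U}}$ and orthogonality of $\overline{\mathcal{X}}$ to $\overline{\mathcal{U}\cup\mathcal{V}}$. One remaining check, $\q(C_{\varphi}v_i') = \q(v_i') = 0$ on the isotropic $v_i'$, pins down the diagonal entries of $M_2$ in terms of $M_1, C_1, C_2$ and is consistent with the alternating relation above, whose diagonal vanishes automatically in characteristic $2$; I would include it for completeness, but it imposes no further relation among the independent blocks.
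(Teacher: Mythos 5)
Your proposal is correct and takes essentially the same approach as the paper's proof: both read the block structure of $C_{\varphi}$ off the ordered basis, expand $\mathrm{B}(C_{\varphi}v_i', C_{\varphi}v_j')$ for an isotropic--anisotropic pair to obtain $M_4 = M_1^T$ and for an isotropic--isotropic pair to obtain $M_2 + M_2^T = C_1C_2^T + C_2C_1^T$, and then use $\q(C_{\varphi}v_i') = \q(v_i')$ to pin down the diagonal of $M_2$. The only difference is stylistic, since the paper first records the pairing identities through the matrix $[\mathrm{B}_{\mathcal{U}\cup\mathcal{V}}]$ while you pair basis vectors directly.
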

\begin{proof}
Using the ordered basis  (\ref{orderedbasis}) we see that the matrix representation for $\mathrm{B}_{\mathcal{U} \cup \mathcal{V}}$ is of the form
\[ [\mathrm{B}_{  \mathcal{U} \cup \mathcal{V}}] = \begin{bmatrix}
				0 & \id_{l-m} \\
				\id_{m} & 0
				\end{bmatrix}.  \]
Now notice that $M_1: \overline{\mathcal{V}_{\mathrm{is}}} \to \overline{ \mathcal{U}_{\mathrm{an}} }$ and $M_4: \overline{ \mathcal{V}_{\mathrm{an}} } \to \overline{ \mathcal{U}_{\mathrm{is}} }$.  We will compute $\mathrm{B}(Mv_i,v_j)$ for different values of $i$ and $j$.  First, we notice
\begin{align*}
\mathrm{B}(Mv_i,v_j) &= (Mv_i)^T [\mathrm{B}_{  \mathcal{U} \cup \mathcal{V}}] v_j \\
&= v_i^T \begin{bmatrix}
	M_1 & M_2 \\
	0 & M_4
	\end{bmatrix}^T 
	\begin{bmatrix}
		0 & \id_{l-m} \\
		\id_{m} & 0
		\end{bmatrix} v_j \\
&=  v_i^T \begin{bmatrix}
	M_1^T & 0 \\
	M_2^T & M_4^T
	\end{bmatrix}
	\begin{bmatrix}
		0 & \id_{l-m} \\
		\id_{m} & 0
		\end{bmatrix} v_j \\
&= v_i^T
	\begin{bmatrix}
		0 & \id_{l-m} \\
		\id_{m} & 0
		\end{bmatrix}
		 \begin{bmatrix}
	M_4^T & M_2^T \\
	0 & M_1^T
	\end{bmatrix} v_j
\end{align*}
so, we have the following relations
\begin{align*}
\mathrm{B}(M v_i, v_j ) &= \mathrm{B}(M_1 v_i, v_j) \\
&= \mathrm{B}(v_i, M_2^T v_j + M_1^T v_j) \\
&= \mathrm{B}(v_i, M_1^Tv_j) 
\end{align*}
or
\begin{align*}
\mathrm{B}( v_i, M v_j ) &= \mathrm{B}(M_2v_j + M_4v_j, v_i) \\
&= \mathrm{B}(M_4v_j, v_i) \\
&= \mathrm{B}(v_j, M_4^Tv_i) 
\end{align*}
when $1\leq j <m+1\leq i \leq l$
and 
\begin{align*}
\mathrm{B}(M v_i, v_j ) &= \mathrm{B}(M_2 v_i + M_4v_i, v_j) \\
&= \mathrm{B}(M_2 v_i,  v_j) \\
&= \mathrm{B}(v_i, M_2^T v_j + M_1^Tv_j) \\
&= \mathrm{B}(v_i, M_2^T v_j ) 
\end{align*}
when $1\leq i,j \leq m$.

Now we compute $\mathrm{B}(C_{\varphi} v_i, C_{\varphi} v_j)$ for $1\leq i,j \leq m$
\begin{align*}
0 &= \mathrm{B}(C_{\varphi} v_i, C_{\varphi} v_j) \\
&= \mathrm{B}(v_i + (C_1^T + C_2^T + M_4+M_2)v_i, v_j + (C_1^T + C_2^T + M_4+M_2)v_j) \\
&= \mathrm{B}(v_i,M_2v_j) + \mathrm{B}(C_1^Tv_i,C_2^Tv_j) + \mathrm{B}(C_2^Tv_i,C_1^Tv_j) + \mathrm{B}(M_2v_i,v_j) 
\end{align*}
and so, we have
\[ \mathrm{B}(v_i, (M_2 + M_2^T) v_j) = \mathrm{B}(v_i, (C_1C_2^T + C_2C_1^T)v_j) \]
and since $\mathrm{B}$ is nondegenerate on this nonsingular space we have $M_2 + M_2^T = C_1C_2^T + C_2C_1^T$.

On the other hand, if $1 \leq i \leq m < j \leq l$ we have
\begin{align*}
0 &= \mathrm{B}(C_{\varphi} v_i, C_{\varphi} v_j) \\
&= \mathrm{B}(v_i + (C_1^T + C_2^T + M_4+M_2)v_i, v_j + M_1v_j) \\
&= \mathrm{B}(v_i,M_1v_j)+B(M_4v_i,v_j) \\
\mathrm{B}(v_i, M_1 v_j) &= \mathrm{B}(v_i, M_4^T v_j)
\end{align*}
and so, we have $M_1 = M_4^T$.  Further notice that the images of $C_1,C_2$ and $M_1$ are in the totally isotropic subspace of $\overline{\mathcal{U}}$ so have zero norm and zero bilinear form values on $\overline{\mathcal{U} \cup \mathcal{X}}$.

There is a further constraint on the map $M_2$.  To see this we consider the norm of the image under $C_{\varphi}$ on $v_i$ with $1 \leq i \leq m$
\begin{align*}
\q(C_{\varphi}v_i) &= \q(v_i + (C_1^T + C_2^T + M_4+M_2)v_i ) \\
\q(v_i) &= \q(v_i) + \q((C_1^T + C_2^T) v_i) + \q(M_4v_i+ M_2v_i) + \mathrm{B}(v_i, M_4v_i + M_2v_i) \\
\mathrm{B}(v_i,M_2v_i) &=  \q((C_1^T + C_2^T) v_i) + \q(M_1^Tv_i)
\end{align*}
this computation shows us that the diagonal of $M_2$ consists of entries of the form $\q((C_1^T + C_2^T) v_i) + \q(M_1^Tv_i))$.
\end{proof}

\begin{lemma} 
The subgroup $\mathsf{P}$ is isomorphic to $\Orth(\q_{\mathcal{U}}, k)$.
\end{lemma}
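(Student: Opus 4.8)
The plan is to exhibit the restriction-to-$\overline{\mathcal{U}}$ map as an explicit isomorphism and then verify it is a well-defined bijective homomorphism. Concretely, I would define
\[ \Phi \colon \mathsf{P} \longrightarrow \Orth(\q_{\mathcal{U}},k), \qquad P_{\varphi} \longmapsto \varphi_{\mathcal{U}} = \bigl[\begin{smallmatrix} P_1 & P_2 \\ 0 & \id \end{smallmatrix}\bigr], \]
and observe at the outset that in $P_{\varphi}$ the two blocks indexed by $\mathcal{X}$ (the third and fourth rows and columns of (\ref{P})) are the identity and are completely decoupled from the remaining blocks. Consequently every computation involving elements of $\mathsf{P}$ collapses to an action on $\overline{\mathcal{U} \cup \mathcal{V}}$, which is exactly the situation of the $\dim(W)=2l$ case already analyzed; this observation is what lets me import the earlier structural results.

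Next I would establish that $\Phi$ is a bijection of underlying sets. Surjectivity is immediate from Lemma \ref{tranv_fix_exist}, which guarantees that every $\varphi_{\mathcal{U}} \in \Orth(\q_{\mathcal{U}},k)$ is realized by an element of $\Orth(\q,k)^{\mathcal{I}_{\tau}}$ of the shape $P_{\varphi}$ with the canonical choice $A_{\varphi}$ (recall $A_{\varphi} v_j' = \q(\varphi_{\mathcal{U}}^*(v_j'))u_j'$). For injectivity I would note that the pair $(P_1,P_2)$ already determines $P_1^*$, the block $P_2^{T}P_1^*$, and the entry $A_{\varphi}$, so the full matrix $P_{\varphi}$ is reconstructed from $\varphi_{\mathcal{U}}$; in particular $\varphi_{\mathcal{U}}=\id$ forces $P_1=\id$, $P_2=0$, $A_{\varphi}=0$, hence $P_{\varphi}=\id$. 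This also gives an explicit inverse $\varphi_{\mathcal{U}} \mapsto P_{\varphi}$.

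To see that $\Phi$ respects multiplication, I would use the decoupling from the first step to reduce $P_{\varphi}P_{\theta}$ to a product over $\overline{\mathcal{U} \cup \mathcal{V}}$ (the $\mathcal{X}$-blocks simply stay $\id$), and then pass through the identification $P_{\varphi} \leftrightarrow (\varphi_{\mathcal{U}},0)$ together with the product formula (\ref{product_nonsing}). The corollary stating $(\varphi_{\mathcal{U}},0)(\theta_{\mathcal{U}},0)=(\varphi_{\mathcal{U}}\theta_{\mathcal{U}},0)$ is precisely the assertion $P_{\varphi}P_{\theta}=P_{\varphi\theta}$, so applying $\Phi$ yields $\varphi_{\mathcal{U}}\theta_{\mathcal{U}}=\Phi(P_{\varphi})\Phi(P_{\theta})$.

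The one delicate point — and the step I would check most carefully — is that the canonical representative $P_{\varphi}$ attached to each $\varphi_{\mathcal{U}}$ is genuinely closed under the product, i.e.\ that the $(1,6)$ block produced by $P_{\varphi}P_{\theta}$ is exactly $A_{\varphi\theta}$ and not $A_{\varphi\theta}$ plus some nonzero element of $\mathsf{A}(\q_{\mathcal{U}},k)$. This is exactly the content of Proposition \ref{AforP}, which computes that block as $\theta_{\mathcal{U}}^{-1}A_{\varphi}\theta_{\mathcal{U}}^* + A_{\theta}$, combined with the vanishing of the $\mathsf{A}$-component recorded in the corollary above. With those two facts in hand the homomorphism property is forced, and everything else is routine bookkeeping with the block structure of (\ref{P}).
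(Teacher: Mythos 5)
Your proposal is correct and takes essentially the same route as the paper: the paper's own proof is a one-line citation of Lemma \ref{tranv_fix_exist} and Proposition \ref{AforP}, which are precisely the two ingredients you invoke for surjectivity and for closure of the canonical representatives $P_{\varphi}$ under multiplication (via the corollary $(\varphi_{\mathcal{U}},0)(\theta_{\mathcal{U}},0)=(\varphi_{\mathcal{U}}\theta_{\mathcal{U}},0)$). Your write-up simply makes explicit the bookkeeping the paper leaves implicit, namely well-definedness, injectivity, and the decoupling of the $\mathcal{X}$-blocks that reduces everything to the $\dim(W)=2l$ case.
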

\begin{proof}
This the same group from Lemma \ref{tranv_fix_exist} and Proposition \ref{AforP}.
\end{proof}

\begin{lemma}
The subgroup $\mathsf{X}$ is isomorphic to $\Orth(\q_{\mathcal{U}_{\mathrm{an}}' \cup \mathcal{X}},k)$
\end{lemma}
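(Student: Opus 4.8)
The plan is to realize the stated isomorphism by restricting each $X_{\varphi}\in\mathsf{X}$ to the invariant subspace $Y:=\overline{\mathcal{U}_{\mathrm{an}}'\cup\mathcal{X}}$ and identifying the image with $\Orth(\q_{\mathcal{U}_{\mathrm{an}}'\cup\mathcal{X}},k)$ via Theorem \ref{uniqueC}. First I would observe that every matrix of the form (\ref{X}) fixes $\overline{\mathcal{U}_{\mathrm{is}}'}$ and $\overline{\mathcal{V}_{\mathrm{is}}'}$ pointwise and carries the blocks spanned by $\mathcal{U}_{\mathrm{an}}'$, $\mathcal{X}$ back into themselves, so $Y$ is $X_{\varphi}$-invariant and $\Theta(X_{\varphi}):=X_{\varphi}|_{Y}$ is well defined. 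Since $\overline{\mathcal{U}_{\mathrm{an}}'}$ is totally singular, anisotropic, and orthogonal to $\overline{\mathcal{X}}$, it is exactly $\mathrm{rad}(Y)$, while $\overline{\mathcal{X}}$ is a maximal nonsingular subspace; hence $Y$ is nondefective and Theorem \ref{uniqueC} applies. Reading off (\ref{X}), $\Theta(X_{\varphi})$ fixes $\mathrm{rad}(Y)$, has symplectic part $\bigl[\begin{smallmatrix} X_1 & X_2\\ X_3 & X_4\end{smallmatrix}\bigr]\in\Sp(\mathrm{B}_{\mathcal{X}},k)$, and radical-valued part $[C_3\mid C_4]$, which is precisely the shape forced by Theorem \ref{uniqueC}; thus $\Theta$ maps $\mathsf{X}$ into $\Orth(\q_{\mathcal{U}_{\mathrm{an}}'\cup\mathcal{X}},k)$. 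That $\Theta$ is a homomorphism is immediate, because restriction to an invariant subspace is multiplicative.

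For surjectivity I would invoke the preceding proposition together with Lemma \ref{tranv_Sp_M}. Given $\sigma\in\Orth(\q_{\mathcal{U}_{\mathrm{an}}'\cup\mathcal{X}},k)$, its symplectic part $\varphi_{\mathcal{X}}$ satisfies the liftability condition $\q(\varphi_{\mathcal{X}}(w))+\q(w)\in\q(\mathrm{rad}(Y))$ of Theorem \ref{uniqueC}, and the associated radical map $[C_3\mid C_4]$ is the unique one attached to $\varphi_{\mathcal{X}}$. Factoring $\varphi_{\mathcal{X}}$ into symplectic transvections and applying Lemma \ref{tranv_Sp_M} to each factor produces a map $M$ so that the full matrix $X_{\varphi}$ of the form (\ref{X}) lies in $\Orth(\q,k)$ and restricts to $\sigma$; hence $\sigma$ is in the image of $\Theta$.

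Injectivity is where the real difficulty lies, and I expect it to be the main obstacle. What must be shown is that $X_{\varphi}$ is recovered from $\Theta(X_{\varphi})$, i.e. that the block $M_{\varphi}$ governing the action on $\overline{\mathcal{V}_{\mathrm{an}}'}$ is determined by the symplectic part and $[C_3\mid C_4]$. The uniqueness clause of Theorem \ref{uniqueC} pins $C_3,C_4$ (hence $C^{\dagger}$) to the symplectic part, so the task reduces to showing that the requirement $\q(v+M_{\varphi}v)=\q(\varphi_{\mathcal{U}}^{*}(v))$ on each anisotropic $v\in\overline{\mathcal{V}_{\mathrm{an}}'}$, together with preservation of $\mathrm{B}$, leaves no freedom in $M_{\varphi}v\in\overline{\mathcal{U}_{\mathrm{an}}'}$. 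This is delicate precisely because the anisotropic pairing between $\overline{\mathcal{U}_{\mathrm{an}}'}$ and $\overline{\mathcal{V}_{\mathrm{an}}'}$ can admit more than one norm-compatible adjustment, so I would attack it by pinning $M_{\varphi}$ to the canonical value coming from the explicit construction in Lemma \ref{tranv_Sp_M} and then verifying that this canonical extension $\sigma\mapsto X_{\sigma}$ respects composition, exactly in the spirit of the cocycle identity established for $\mathsf{P}$ in Proposition \ref{AforP}. Establishing that determinacy — equivalently, that the chosen section is a group homomorphism whose image is all of $\mathsf{X}$ — is the crux on which the isomorphism with $\Orth(\q_{\mathcal{U}_{\mathrm{an}}'\cup\mathcal{X}},k)$ rests.
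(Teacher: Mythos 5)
Your proposal follows the same route the paper takes, but the paper's entire proof of this lemma is a one-line citation of Grove's Theorem 14.1 (Theorem \ref{uniqueC} here) ``and the related discussion'': your restriction map $\Theta$, the observation that $\overline{\mathcal{U}_{\mathrm{an}}'} = \mathrm{rad}(Y)$ is anisotropic so that Theorem \ref{uniqueC} applies, and the surjectivity argument via factoring $\varphi_{\mathcal{X}}$ into symplectic transvections and invoking Lemma \ref{tranv_Sp_M} are exactly the details that citation stands in for. Where you go beyond the paper is the injectivity discussion, and your suspicion there is justified: the orthogonality constraints really do leave freedom in $M_{\varphi}$. Concretely, for an anisotropic $u_j'$ the orthogonal transvection $\tau_{u_j'}$ has the shape (\ref{X}) with $\varphi_{\mathcal{X}} = \id$, $C_3 = C_4 = 0$, and $M_{\varphi}v_j' = \q(u_j')^{-1}u_j'$; one checks $\q(M_{\varphi}v_j') + \mathrm{B}(v_j', M_{\varphi}v_j') = \q(u_j')^{-1} + \q(u_j')^{-1} = 0$, so this element lies in $\Orth(\q,k)^{\mathcal{I}_{\tau}}$ (it fixes $\overline{\mathcal{U}}$ pointwise), yet it restricts to the identity on $\overline{\mathcal{U}_{\mathrm{an}}' \cup \mathcal{X}}$. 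Hence if $\mathsf{X}$ is read as \emph{all} matrices of the form (\ref{X}) in the fixed point group, $\Theta$ has nontrivial kernel and the lemma fails as literally stated; it is true precisely under the reading you propose, namely that $M_{\varphi}$ denotes the canonical block attached to $(\varphi_{\mathcal{X}}, C)$ by the construction of Lemma \ref{tranv_Sp_M}, parallel to the canonical $A_{\varphi}$ in $\mathsf{P}$, with the cocycle verification (in the spirit of Proposition \ref{AforP}) making $\sigma \mapsto X_{\sigma}$ a multiplicative section and hence an isomorphism. So your plan does not merely reproduce the paper's proof --- it identifies, and correctly repairs, a point the citation glosses over. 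One small correction: the norm constraint on $M_{\varphi}$ at an anisotropic $v \in \overline{\mathcal{V}_{\mathrm{an}}'}$ is not $\q(v + M_{\varphi}v) = \q(\varphi_{\mathcal{U}}^{*}(v))$ (Proposition \ref{tranv_fix_rel} is the $\dim W = 2l$ case); in the presence of $\mathcal{X}$ it reads $\q(M_{\varphi}v) + \mathrm{B}(v, M_{\varphi}v) = \q(\varphi_{\mathcal{X}}(C^{\dagger}v))$, which is exactly what makes the transvection example above a kernel element.
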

\begin{proof}
This is the group described within \cite{gr02} by Theorem 14.1 and the related discussion.  Theorem 14.1 also appears in this document as Theorem \ref{uniqueC}.
\end{proof}

\begin{lemma} \label{CnormalX}
$\mathsf{X} \mathsf{C} = \mathsf{C} \mathsf{X}  \cong \mathsf{X} \ltimes \mathsf{C}$.
\end{lemma}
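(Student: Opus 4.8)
The plan is to establish the normal-factor structure directly rather than to manipulate the set products by hand: I would show that $\mathsf{X}$ normalizes $\mathsf{C}$ and that $\mathsf{X}\cap\mathsf{C}=\{\id\}$, and then invoke the standard internal characterization of a semidirect product. Recall the elementary fact that if $\mathsf{X}$ and $\mathsf{C}$ are subgroups (established in the preceding discussion) with $X\mathsf{C}X^{-1}\subseteq\mathsf{C}$ for every $X\in\mathsf{X}$, then the set product $\mathsf{X}\mathsf{C}$ is a subgroup, equals $\mathsf{C}\mathsf{X}$, and $\mathsf{C}$ is normal in it; if moreover $\mathsf{X}\cap\mathsf{C}=\{\id\}$, then $\mathsf{X}\mathsf{C}\cong\mathsf{X}\ltimes\mathsf{C}$. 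So the whole lemma reduces to these two verifications.

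First I would dispose of the intersection. Comparing the matrix forms (\ref{X}) and (\ref{C}), an element of $\mathsf{X}$ has $C_1=C_2=0$ and $\mathcal{X}$-block $\bigl[\begin{smallmatrix}X_1&X_2\\X_3&X_4\end{smallmatrix}\bigr]$ ranging over $\Orth(\q_{\mathcal{U}_{\mathrm{an}}'\cup\mathcal{X}},k)$, whereas an element of $\mathsf{C}$ has identity $\mathcal{X}$-block together with $C_3=C_4=M_\varphi=0$. An element lying in both must therefore have identity $\mathcal{X}$-block and $C_1=C_2=M_1=M_2=M_4=0$, i.e.\ it equals $\id$. Hence $\mathsf{X}\cap\mathsf{C}=\{\id\}$.

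The substance of the proof is the normalization claim. Taking $X=X_\varphi\in\mathsf{X}$ with induced symplectic map $\sigma:=\varphi_{\mathcal{X}}\in\Sp(\mathrm{B}_{\mathcal{X}},k)$, and $C=C_\theta\in\mathsf{C}$ with parameters $C_1,C_2,M_1,M_2,M_4$ as in (\ref{C}), I would compute the conjugate $XCX^{-1}$ by block multiplication and read off its entries. Two things must be checked. (i) $XCX^{-1}$ again has the shape (\ref{C}): its $\mathcal{X}$-block is $\sigma\,\id\,\sigma^{-1}=\id$, and because $X$ fixes $\overline{\mathcal{U}_{\mathrm{is}}}$ pointwise (the $(1,1)$ block of (\ref{X}) is $\id$) the new maps $C_1',C_2'$ still land in $\overline{\mathcal{U}_{\mathrm{is}}}$, being the reparametrization of $(C_1,C_2)$ by $\sigma$ on the source $\overline{\mathcal{X}}$. (ii) The new parameters satisfy the constraints of Proposition \ref{groupC}, namely $M_1'={M_4'}^{T}$ and $M_2'+{M_2'}^{T}=C_1'{C_2'}^{T}+C_2'{C_1'}^{T}$. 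The first is immediate from the symmetry of conjugation, while the second is the crux: the quantity $C_1C_2^{T}+C_2C_1^{T}$ records the symmetric part of how $C$ pairs the $x$- and $y$-halves of $\overline{\mathcal{X}}$ against $\mathrm{B}_{\mathcal{X}}$, and since $\sigma$ preserves $\mathrm{B}_{\mathcal{X}}$, reparametrizing by $\sigma$ transforms both sides of the identity covariantly, so the relation is preserved.

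The step I expect to be the main obstacle is (ii): tracking $M_2+M_2^{T}=C_1C_2^{T}+C_2C_1^{T}$ through conjugation, because the symplectic action mixes the $x$- and $y$-components of $\overline{\mathcal{X}}$ and hence mixes $C_1$ with $C_2$. The verification amounts to checking that $C_1C_2^{T}+C_2C_1^{T}$ is covariant under this mixing, which is precisely where the symplectic invariance of $\mathrm{B}_{\mathcal{X}}$ enters. Once (i) and (ii) are in hand, the normalization gives $\mathsf{X}\mathsf{C}=\mathsf{C}\mathsf{X}$ as a subgroup in which $\mathsf{C}$ is normal, and combined with $\mathsf{X}\cap\mathsf{C}=\{\id\}$ this yields $\mathsf{X}\mathsf{C}\cong\mathsf{X}\ltimes\mathsf{C}$.
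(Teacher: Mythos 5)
Your proposal is correct and takes essentially the same route as the paper: the paper likewise proves the lemma by computing $X_{\varphi}^{-1}$ explicitly and verifying that $X_{\varphi} C_{\varphi} X_{\varphi}^{-1} \in \mathsf{C}$, i.e., that $\mathsf{X}$ normalizes $\mathsf{C}$, which yields $\mathsf{XC}=\mathsf{CX}$ and the semidirect product structure. Your two supplementary verifications --- the trivial intersection $\mathsf{X} \cap \mathsf{C} = \{\id\}$ read off from the block shapes, and the preservation of the constraints of Proposition \ref{groupC} under conjugation via the symplectic relations for $\bigl[\begin{smallmatrix} X_1 & X_2 \\ X_3 & X_4 \end{smallmatrix}\bigr]$ --- are correct and simply make explicit details that the paper leaves implicit.
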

\begin{proof}
Notice that $\mathsf{XC}$ is closed under multiplication.  We show that $\mathsf{XC} = \mathsf{CX}$ by verifying that $\mathsf{C}$ is a normal subgroup of $\mathsf{XC}$.  Consider $X_{\varphi} \in \mathsf{X}$ from (\ref{X}) and compute the inverse
\[ X_{\varphi}^{-1} = \begin{bmatrix}
			\id  &  0 &  0  & 0 &  0  & 0 \\
			0 & \id & C_3X_4^T + C_4X_3^T & C_3X_2^T + C_4 X_1^T  & C_3C_4^T + C_4C_3^T + M_{\varphi} &  0 \\
			0 & 0 & X_4^T &  X_2^T & C_4^T &  0 \\
			0 & 0 & X_3^T & X_1^T & C_3^T &  0  \\
			0& 0 & 0 & 0 & \id & 0 \\
			0& 0 & 0 & 0 & 0 &  \id \\
			\end{bmatrix} \]
We define $\varphi_{\mathcal{X}}$ as the symplectic matrix $\bigl[\begin{smallmatrix} X_1 & X_2 \\ X_3 & X_4 \end{smallmatrix} \bigr]$ with the standard symplectic basis for $\overline{\mathcal{X}}$.  Now $X_{\varphi} C_{\varphi} X_{\varphi}^{-1} \in \mathsf{C}$.
\end{proof}

We define the group $\mathsf{C}$ described in Proposition \ref{groupC} to be $\mathsf{A}(\q_{\mathcal{U}},k)$.

\begin{thm}
\label{orth_diag_fixed_pts}
The involution $\tau=\tau_{\mathcal{U}}$ written as a product of transvections induced by vectors $\mathcal{U}$ that are mutually orthogonal has a fixed point group $\Orth(\q,k)^{\mathcal{I}_{\tau}} \cong \Orth(\q_{\mathcal{U}},k) \ltimes ( \Orth(\q_{ \mathcal{U}_{\mathrm{an}} \cup \mathcal{X} }, k) \ltimes \mathsf{A}(\q_{\mathcal{U}},k)$.
\end{thm}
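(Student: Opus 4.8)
The plan is to assemble the theorem from the three subgroups $\mathsf{P}$, $\mathsf{X}$, and $\mathsf{C}$ already identified, realizing $\Orth(\q,k)^{\mathcal{I}_{\tau}}$ as an iterated internal semidirect product. First I would invoke the factorization $\varphi = P_{\varphi} X_{\varphi} C_{\varphi}$ established in the preceding proposition, which shows that every element of the fixed point group lies in the set product $\mathsf{P}\mathsf{X}\mathsf{C}$. Together with the identifications $\mathsf{P} \cong \Orth(\q_{\mathcal{U}},k)$ and $\mathsf{X} \cong \Orth(\q_{\mathcal{U}_{\mathrm{an}} \cup \mathcal{X}},k)$ from the displayed lemmas, and $\mathsf{C} = \mathsf{A}(\q_{\mathcal{U}},k)$ from Proposition \ref{groupC}, this produces the three factors appearing on the right-hand side.

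Next, Lemma \ref{CnormalX} already furnishes the inner semidirect product: $\mathsf{X}\mathsf{C} = \mathsf{C}\mathsf{X}$ is a subgroup in which $\mathsf{C}$ is normal, so $\mathsf{X}\mathsf{C} \cong \mathsf{X} \ltimes \mathsf{C}$. It then remains to exhibit the outer semidirect product $\Orth(\q,k)^{\mathcal{I}_{\tau}} \cong \mathsf{P} \ltimes (\mathsf{X}\mathsf{C})$, for which I would verify the three standard hypotheses: that the whole group equals $\mathsf{P}(\mathsf{X}\mathsf{C})$, that $\mathsf{P} \cap (\mathsf{X}\mathsf{C}) = \{\id\}$, and that $\mathsf{X}\mathsf{C}$ is normal. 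The first is immediate from the factorization in the first step. The intersection is trivial because every element of $\mathsf{X}\mathsf{C}$ acts as the identity on the diagonal block corresponding to $\overline{\mathcal{U}}$ (that is, has $\varphi_{\mathcal{U}} = \id$), which in the form (\ref{P}) forces $P_1 = \id$ and $P_2 = 0$; the uniqueness of the block $A_{\varphi}$ from Lemma \ref{tranv_fix_exist} then gives $A_{\varphi} = 0$, so the element is $\id$.

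The main work, and the step I expect to be the chief obstacle, is the normality of $\mathsf{X}\mathsf{C}$ under conjugation by $\mathsf{P}$; since $\mathsf{X}\mathsf{C}$ is already a subgroup and the group equals $\mathsf{P}(\mathsf{X}\mathsf{C})$, it suffices to show $P_{\varphi}(X_{\theta}C_{\theta})P_{\varphi}^{-1} \in \mathsf{X}\mathsf{C}$ for all $P_{\varphi} \in \mathsf{P}$. The key structural feature is that $P_{\varphi}$ acts as the identity on $\overline{\mathcal{X}}$ and as $\varphi_{\mathcal{U}}$ on $\overline{\mathcal{U}}$, coupling $\overline{\mathcal{V}}$ into $\overline{\mathcal{U}}$ only through the $A_{\varphi}$ block, so that conjugation leaves the symplectic block on $\overline{\mathcal{X}}$ untouched while transforming each map into $\overline{\mathcal{U}}$ — the blocks $C_3, C_4, M_{\varphi}$ of $\mathsf{X}$ and $C_1, C_2, M_1, M_2, M_4$ of $\mathsf{C}$ — by postcomposition with $\varphi_{\mathcal{U}}$ and, for those issuing from $\overline{\mathcal{V}}$, precomposition with $\varphi_{\mathcal{U}}^{-*}$. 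The delicate part is checking that the defining constraints survive this twist: the uniqueness of $[C_3 \mid C_4]$ guaranteed by Theorem \ref{uniqueC}, and the relation $M_2 + M_2^T = C_1 C_2^T + C_2 C_1^T$ from Proposition \ref{groupC}, must persist for the conjugated blocks. I would carry this out blockwise against the explicit forms (\ref{P}), (\ref{X}), (\ref{C}), using $\varphi_{\mathcal{U}}^* = (\varphi_{\mathcal{U}}^{-1})^T$ and the adjoint identity of Lemma \ref{adjoint}, noting that all corrections introduced by the $A_{\varphi}$ coupling land in the totally singular space $\overline{\mathcal{U}}$ and hence leave the norms and bilinear pairings among $\overline{\mathcal{X}}$-vectors unaffected. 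With normality in hand, the three hypotheses combine to give $\Orth(\q,k)^{\mathcal{I}_{\tau}} \cong \mathsf{P} \ltimes (\mathsf{X} \ltimes \mathsf{C})$, which is the claimed isomorphism.
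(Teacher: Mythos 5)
Your proposal is correct and takes essentially the same route as the paper: its proof likewise writes each fixed element as $P_{\varphi}X_{\varphi}C_{\varphi}$, invokes Lemma \ref{CnormalX} to get $\mathsf{X}\ltimes\mathsf{C}$, and then concludes by noting that normality of $\mathsf{X}\mathsf{C}$ in $\mathsf{P}(\mathsf{X}\mathsf{C})$ can be verified ``in a similar way.'' If anything, you supply more detail than the paper does, since you make the trivial-intersection check explicit and outline the blockwise conjugation computation that the paper leaves to the reader.
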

\begin{proof}
We choose a basis for $\overline{\mathcal{U}}$ such that the basis vectors corresponding to the maximal defect in $\overline{\mathcal{U}}$ are listed first and the corresponding defect in $\overline{\mathcal{V}}$, the nonsingular completion of $\overline{\mathcal{U}}$ in $W$, has its basis vectors listed last.  Then by Proposition \ref{nonsing_diag_fixed_pts} $\varphi \in \Orth(\q,k)^{\mathcal{I}_{\tau}}$ has the following form
 \[ \varphi = \begin{bmatrix}
	\varphi_{\mathcal{U}} &  \varphi_{\mathcal{U}} C & \varphi_{\mathcal{U}} M \\
	0 &  \varphi_{\mathcal{X}} & \varphi_{\mathcal{X}} C^{\dagger} \\
	0 & 0 & \varphi_{\mathcal{U}}^*
	\end{bmatrix} \]\small
	
	 \[ =\begin{bmatrix}
			P_1 &  P_2 &  P_1C_1+P_2C_3  & P_1C_2+P_2C_4 &  P_1M_1+P_2M_3  & P_1(M_2+A_{\varphi})+P_2M_4\\
			0 & \id & C_3 & C_4 & M_3 &  M_4  \\
			0 & 0 & X_1 & X_2 & X_1C_4^T + X_2C_3^T &  X_1C_2^T + X_2C_1^T  \\
			0 & 0 & X_3 & X_4 & X_3C_4^T + X_4C_3^T &  X_3C_2^T + X_4C_1^T  \\
			0& 0 & 0 & 0 & \id & P_2^T P_1^* \\
			0& 0 & 0 & 0 & 0 &  P_1^* \\
			\end{bmatrix}
	\] \normalsize
Notice that this factors as $\varphi = P_{\varphi}X_{\varphi}C_{\varphi}$ from (\ref{P}), (\ref{X}) and (\ref{C}) and since $\varphi \in \Orth(\q,k)$ and we have shown that $P_{\varphi}, X_{\varphi} \in \Orth(\q,k)$ so then $C_{\varphi} \in \Orth(\q,k)$.  By Lemma \ref{CnormalX} we have shown the semi-direct product relationship between $\mathsf{X}$ and $\mathsf{C}$ and in a similar way one can verify $\mathsf{XC}$ is a normal subgroup of $\mathsf{P}(\mathsf{XC})$ so we have 
\[ \Orth(\q,k)^{\mathcal{I}_{\tau}} \cong \Orth(\q_{\mathcal{U}},k) \ltimes ( \Orth(\q_{ \mathcal{U}_{\mathrm{an}} \cup \mathcal{X} }, k) \ltimes \mathsf{A}_{\mathcal{U}}(\q,k)). \]
\end{proof}

The only change when the involution is hyperbolic is that the dimension of $\overline{\mathcal{U} \cup \mathcal{V}}$ is $2l-2$ instead of $2l$.  We can see from this fact that there are involutions that are not $\Orth(\q,k)$-conjugate that have the same fixed point group.  This provides other examples of two distinct $G(k)$-conjugacy classes with the same fixed point group over a field of characteristics $2$.

\begin{rmk}
In the case that the inducing set $\mathcal{U}$  for the involution $\tau = \tau_{\mathcal{U}}$ spans an anisotropic vector space $\overline{\mathcal{U}}$, we have $\Orth(\q_{\mathcal{U}},k) = \{ \id \}$ and the matrices $C$ and $M$ are unique up to a given $\varphi_{\mathcal{X}}$.  So we have $\Orth(\q,k)^{\mathcal{I}_{\tau}} \cong \Orth(\q_{\mathcal{U} \cup \mathcal{X}}, k)$.

If $\overline{\mathcal{U}}$ is totally isotropic then $\Orth(\q_{\mathcal{U}},k) \cong \GL(\overline{\mathcal{U}})$ and $\Orth(\q_{\mathcal{U}_{\mathrm{an}} \cup \mathcal{X}},k) \cong \Orth(\q_{\mathcal{X}},k)$.
\end{rmk}

\section{General quadratic spaces}

We now remove the restriction that $W$ is nonsingular.  Let $W_1$ denote a nonsingular subspace of $W$ of maximal dimension, then $W = W_1 \perp \rad(W)$.  A triple $(\rho, Y,\tau)$ will denote an element in $\Orth(\q,k)$ where $\tau$ is a product of symplectic transvections, $\q(\tau(w) + \rho(Yw)) = \q(w)$ and $\rho$ is a product of basic radical involutions.  

\begin{prop}
An involution in $\Orth(\q,k)$  is a map of the form
\[ (\rho, Y, \tau) = \begin{bmatrix}
\rho & \rho Y \\
0 & \tau
\end{bmatrix}, \]
where $\tau = \tau_{\mathcal{U}} \in \Sp(B_{\mathcal{U} \cup \mathcal{V}}, k)$, $\overline{\mathcal{U} \cup \mathcal{V}} \subset W_1$, $Y:W_1 \to \mathrm{rad}(W)$, $\rho \in \Orth(\q_{\mathrm{rad}(W)},k)$ and $\q(\tau(w) + \rho(Yw)) = \q(w)$ with $\tau^2 = \id$, $\rho^2 = id$ and $Y = \rho Y \tau$.
\end{prop}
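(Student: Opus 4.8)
The plan is to begin with an arbitrary involution $\sigma\in\Orth(\q,k)$ and reconstruct the asserted block form from the geometry of $W=W_1\perp\rad(W)$. The single structural fact that drives everything is that any isometry preserves the radical: if $x\in\rad(W)$ then $\mathrm{B}(\sigma x,\sigma w)=\mathrm{B}(x,w)=0$ for all $w$, and since $\sigma$ is surjective we get $\sigma x\in\rad(W)$; thus $\sigma(\rad(W))=\rad(W)$. Ordering a basis with $\rad(W)$ first, this forces the bottom-left block to vanish, so $\sigma$ is block upper triangular. I would then name the diagonal blocks: set $\rho:=\sigma|_{\rad(W)}$, which lies in $\Orth(\q_{\rad(W)},k)$ and is invertible, and let $\tau:=\pi_{W_1}\circ\sigma|_{W_1}$ be the $W_1$-component of $\sigma$ on $W_1$, where $\pi_{W_1}$ is projection along $\rad(W)$.

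Next I would identify $\tau$ as symplectic and produce $Y$. For $w,w'\in W_1$ the radical parts drop out of $\mathrm{B}$, so $\mathrm{B}(\tau w,\tau w')=\mathrm{B}(\sigma w,\sigma w')=\mathrm{B}(w,w')$; since $\mathrm{B}$ is nondegenerate on the nonsingular space $W_1$, the map $\tau$ is invertible and $\tau\in\Sp(\mathrm{B}_{W_1},k)$. By the cited decomposition of symplectic maps into transvections (Theorem 2.1.9 of \cite{om78}) we may write $\tau=\tau_{\mathcal{U}}$ with inducing set $\mathcal{U}$ and completion $\mathcal{V}$, with $\overline{\mathcal{U}\cup\mathcal{V}}\subset W_1$. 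The radical-valued part of $\sigma|_{W_1}$ is a linear map $N:W_1\to\rad(W)$, and because $\rho$ is invertible I would set $Y:=\rho^{-1}N$, giving $N=\rho Y$ and hence the matrix $\bigl[\begin{smallmatrix}\rho&\rho Y\\0&\tau\end{smallmatrix}\bigr]$.

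The isometry condition is then a direct expansion. For $w\in W_1$ write $\sigma w=\rho Y w+\tau w$ with $\rho Y w\in\rad(W)$ and $\tau w\in W_1$; since $\mathrm{B}$ vanishes against radical vectors, $\q(\sigma w)=\q(\rho Y w)+\q(\tau w)=\q(\tau w+\rho Y w)$, and setting this equal to $\q(w)$ gives $\q(\tau(w)+\rho(Yw))=\q(w)$ (the full condition on all of $W$ reduces to this one because $\rho$ already preserves $\q_{\rad(W)}$ and the radical and $W_1$ contributions separate). Finally I would impose $\sigma^2=\id$ blockwise: the diagonal yields $\rho^2=\id$ and $\tau^2=\id$, and the off-diagonal block $\rho^2 Y+\rho Y\tau$ must vanish, which after using $\rho^2=\id$ and $\mathrm{char}(k)=2$ becomes $Y=\rho Y\tau$. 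Since $\rho^2=\id$, $\rho$ is an involution (or the identity) on $\rad(W)$ and so is a product of basic radical involutions by the decomposition recalled in Section 3, so the triple $(\rho,Y,\tau)$ is of the declared type.

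The step I expect to require the most care is the treatment of $Y$ and its entanglement with $\rho$. Because the radical is not assumed anisotropic, Theorem \ref{uniqueC} does not apply and $\rho$ may be a genuinely nontrivial radical involution; consequently $\tau$ is only symplectic rather than orthogonal, and the slack is absorbed precisely by $Y$. The delicate points are therefore (i) checking that the single scalar condition $\q(\tau(w)+\rho(Yw))=\q(w)$ really encodes all of $\sigma\in\Orth(\q,k)$ once the radical/$W_1$ splitting is used, and (ii) verifying that the off-diagonal relation collapses cleanly to $Y=\rho Y\tau$; the parametrization of the corner block as $\rho Y$ rather than as a bare map is chosen exactly so that these two identities take their stated form. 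The converse, that any triple satisfying $\rho^2=\id$, $\tau^2=\id$, $Y=\rho Y\tau$ and the norm condition defines an involution in $\Orth(\q,k)$, follows by reading the same computations backward.
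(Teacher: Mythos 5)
Your proposal is correct and follows essentially the same route as the paper's proof: radical invariance forces the block upper-triangular form, O'Meara's factorization gives $\tau$ as a product of symplectic transvections, squaring the block matrix yields $\rho^2=\id$, $\tau^2=\id$ and $Y=\rho Y\tau$, and expanding $\q$ on the splitting $W = W_1 \perp \rad(W)$ gives the norm condition. Your writeup is in fact more explicit than the paper's (which leaves the off-diagonal relation $Y=\rho Y\tau$ and the reduction of the isometry condition to $W_1$ implicit), but there is no substantive difference in approach.
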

\begin{proof}
The map $(\rho, Y, \tau) \in \Orth(\q,k)$ must leave $\mathrm{rad}(W)$ invariant.  If you square $(\rho, Y, \tau)$ and set it equal to the identity we need $\rho^2 = \id_{\mathrm{rad}(W)}$ and $\tau^2 = \id$.   In \cite{om78} O'Meara shows us that $\tau$ can be written as a product of symplectic transvections induced by mutually orthogonal vectors in $W_1$.  We call this set of vectors $\mathcal{U} = \{ u_1, \ldots, u_l\}$ and consider a non-singular completion of this totally singular subspace of $W_1$ completed by the vectors $\mathcal{V}=\{ v_1, \ldots, v_l\}$ such that $\mathrm{B}(u_i,v_j) = 1$ if $i=j$ and is zero otherwise.  So $\tau \in \Sp(\mathrm{B}_{\mathcal{U} \cup \mathcal{V}}, k)$.  We have $\rho \in \Orth(\q_{\mathrm{rad}(W)},k)$ since the radical is left invariant by elements in $\Orth(\q,k)$.  The linear transformation $Y : W_1 \to \mathrm{rad}(W)$ can exist since adding on radical vectors leaves the bilinear form invariant.  The quadratic form is left invariant as long as $\q(\tau(w) + \rho(Yw)) = \q(w)$. 
\end{proof}

\begin{corollary}
If $w \in W$ is such that $\tau(w) = w$ then $\rho(Yw) = Yw$.
\end{corollary}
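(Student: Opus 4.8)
The plan is to read off the result directly from the defining relation of an involution established in the preceding proposition, namely $Y = \rho Y \tau$. This relation is exactly what the involution condition $(\rho,Y,\tau)^2 = \id$ contributes in the off-diagonal block: squaring the matrix
\[ \begin{bmatrix} \rho & \rho Y \\ 0 & \tau \end{bmatrix} \]
produces the upper-right block $\rho^2 Y + \rho Y \tau$, and setting this to zero (using characteristic $2$ and $\rho^2 = \id$) yields $Y = \rho Y \tau$. Since the previous proposition already records $Y = \rho Y \tau$ among the conditions for $(\rho,Y,\tau)$ to be an involution, I would simply invoke it rather than rederive it.

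With that relation in hand, the computation is immediate. First I would apply $Y = \rho Y \tau$ to the given vector $w$, obtaining
\[ Yw = (\rho Y \tau)(w) = \rho\bigl(Y(\tau(w))\bigr). \]
Next I would substitute the hypothesis $\tau(w) = w$ into the innermost term, which gives
\[ Yw = \rho(Yw), \]
and this is precisely the claim $\rho(Yw) = Yw$. No further manipulation is needed.

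There is essentially no obstacle here: the content lives entirely in the relation $Y = \rho Y \tau$, which has already been extracted from $(\rho,Y,\tau)^2 = \id$ in the proof of the proposition, so the corollary is a one-step substitution. The only point worth flagging for the reader is conceptual rather than technical, namely the interpretation that $\tau$-fixed vectors of $W_1$ have their $Y$-images landing in the $\rho$-fixed subspace of $\mathrm{rad}(W)$; this is what makes the corollary useful for subsequently analyzing the fixed point group of the full involution on a general quadratic space.
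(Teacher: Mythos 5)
Your proof is correct and matches the paper's intent exactly: the paper states this corollary without proof, treating it as an immediate consequence of the relation $Y = \rho Y \tau$ established in the preceding proposition, which is precisely the one-step substitution you carry out. Nothing is missing.
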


\begin{prop}
For an involution $(\rho, Y, \tau) \in \Orth(\q,k)$ of the form
\[ (\rho, Y, \tau) = \begin{bmatrix}
\rho & \rho Y \\
0 & \tau
\end{bmatrix}, \]
where $\tau = \tau_{\mathcal{U}} \in \Sp(\mathrm{B}_{\mathcal{U} \cup \mathcal{V}}, k)$, $\overline{\mathcal{U} \cup \mathcal{V}} \subset W_1$, $Y:W_1 \to \mathrm{rad}(W)$, $\rho \in \Orth(\q_{\mathrm{rad}(W)},k)$ and $\q(\tau(w) + \rho(Yw)) = \q(w)$ we can choose a basis $\mathcal{U}'$ such that with respect to the new basis $(\rho, Y, \tau)$ is of the form
\[ \begin{bmatrix}
\rho & Y_0\\
0 & \tau_Y
\end{bmatrix} \]
with $\tau_Y \in \Orth(\q,k)$ as a product of orthogonal transvections, $Y_0v_i=0$, and $\q(Y_0 w) = 0$.
\end{prop}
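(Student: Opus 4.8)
The plan is to convert the symplectic transvections composing $\tau$ into genuine orthogonal transvections by perturbing their inducing vectors by elements of $\rad(W)$, and to show that the correct perturbation is read off directly from $Y$. Write $\Theta$ for the involution $(\rho,Y,\tau)$, and write $\tau=\tau_{u_l,\omega_l}\cdots\tau_{u_1,\omega_1}$ as a product of symplectic transvections induced by the mutually orthogonal set $\mathcal{U}$. Since $\mathrm{B}(u_j,v_i)=\delta_{ij}$ and the $u_j$ are mutually orthogonal, a direct computation gives $\tau v_i=v_i+\omega_i u_i$, i.e. $(\tau+\id)v_i=\omega_i u_i$, with $\omega_i\neq 0$ as the transvections are nontrivial. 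I would then set
\[ g_i = \omega_i^{-1}\rho(Yv_i)\in\rad(W), \qquad u_i' = u_i+g_i, \]
and take the new basis $\mathcal{U}'=\{u_1',\dots,u_l'\}$, keeping $\mathcal{V}$ and the $\tau$-fixed part $\overline{\mathcal{X}}$ unchanged.

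First I would record the two elementary facts that make this work. Because $g_i\in\rad(W)$, the bilinear form is unaffected, so $\mathrm{B}(u_i',z)=\mathrm{B}(u_i,z)$ for all $z$; in particular the $u_i'$ remain mutually orthogonal and satisfy $\mathrm{B}(u_i',v_j)=\delta_{ij}$, so that $\mathcal{U}'\cup\mathcal{V}\cup\mathcal{X}$ spans a nonsingular complement $W_1'$ of $\rad(W)$ (the restricted bilinear form is unchanged, and a dimension count gives the complement). Second, and this is the crux, the norm of $u_i'$ comes out exactly right: applying the hypothesis $\q(\tau(v_i)+\rho(Yv_i))=\q(v_i)$ and using that $\rho$ preserves $\q$ and that $\rho(Yv_i)\in\rad(W)$ is orthogonal to $\tau v_i$, one gets $\q(Yv_i)=\q(v_i)+\q(\tau v_i)=\omega_i^2\q(u_i)+\omega_i$, whence
\[ \q(u_i') = \q(u_i)+\q(g_i) = \q(u_i)+\omega_i^{-2}\q(Yv_i) = \omega_i^{-1}\neq 0. \]
Therefore $\tau_{u_i'}=\tau_{u_i',\,1/\q(u_i')}=\tau_{u_i',\omega_i}$ is a genuine orthogonal transvection, and $\tau_Y:=\tau_{u_l'}\cdots\tau_{u_1'}$ is a product of orthogonal transvections, hence lies in $\Orth(\q,k)$.

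Next I would compute the block form of $\Theta$ with respect to $W=\rad(W)\oplus W_1'$. Each $\tau_{u_i'}$ fixes $\rad(W)$ pointwise (as $\mathrm{B}(u_i',g)=0$ for $g\in\rad(W)$) and preserves $W_1'$, so $\tau_Y$ restricts to $W_1'$ and supplies the bottom-right block. Evaluating $\Theta$ on $v_i$ and rewriting $u_i=u_i'+g_i$ gives $\Theta v_i=(v_i+\omega_i u_i')+(\omega_i g_i+\rho Yv_i)$, where the first summand equals $\tau_Y v_i\in W_1'$ and the second lies in $\rad(W)$; by the choice $g_i=\omega_i^{-1}\rho(Yv_i)$ this radical summand is $\rho Yv_i+\rho Yv_i=0$, so $Y_0 v_i=0$. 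For the last assertion I would invoke $\tau_Y\in\Orth(\q,k)$: writing $\Theta w=\tau_Y w+Y_0 w$ with $Y_0w\in\rad(W)$ orthogonal to $\tau_Y w$ gives $\q(w)=\q(\Theta w)=\q(\tau_Y w)+\q(Y_0w)=\q(w)+\q(Y_0w)$, hence $\q(Y_0w)=0$.

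The main obstacle is the compatibility hidden in the second paragraph: the single perturbation $g_i$ must simultaneously (i) carry the norm needed to make $\tau_{u_i'}$ orthogonal and (ii) cancel the radical component of $\Theta v_i$ so that $Y_0 v_i=0$. The content of the proposition is that $g_i=\omega_i^{-1}\rho(Yv_i)$ does both at once, and the reason it can is precisely the orthogonality constraint $\q(Yv_i)=\omega_i^2\q(u_i)+\omega_i$ forced by $(\rho,Y,\tau)\in\Orth(\q,k)$ — so no auxiliary hypothesis on which norms $\rad(W)$ represents is required, and the construction is explicit rather than an existence argument. I would therefore present the norm identity as the decisive step and treat the surrounding block-matrix bookkeeping as routine verification.
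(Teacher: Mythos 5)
Your proposal is correct and follows essentially the same route as the paper: both perturb each inducing vector to $u_i' = u_i + \omega_i^{-1}\rho(Yv_i)$, derive the key identity $\q(Yv_i) = \omega_i^2\q(u_i)+\omega_i$ from invariance of $\q$, conclude $\q(u_i') = \omega_i^{-1}$ so that $\tau_{u_i'}$ is a genuine orthogonal transvection matching $\Theta$ on $\overline{\mathcal{V}}$, and define $Y_0$ accordingly. Your closing argument that $\q(Y_0w)=0$ via $\tau_Y \in \Orth(\q,k)$ is a slightly cleaner packaging of what the paper checks on $\overline{\mathcal{U}\cup\mathcal{X}}$ directly, but the substance is the same.
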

\begin{proof}
Let $\tau = \tau_{a_l,u_l} \cdots \tau_{a_2,u_2}\tau_{a_1,u_1} \in \Sp(\mathrm{B}_{ \mathcal{U} \cup \mathcal{V} }, k)$.  So that $l$ is the length of the transvection we will choose $a_i \neq 0$ for $1 \leq i \leq l$.  Let $\mathcal{X}$ be a basis for $(\overline{ \mathcal{U} \cup \mathcal{V} })^{\perp} \cap W_1$ and notice that if $w \in \overline{ \mathcal{U} \cup \mathcal{X} }$ that $(\rho, Y, \tau)(w) = \tau(w) + \rho(Yw) = w + \rho(Yw)$ and that $\q(Yw) = 0$ since $\mathrm{B}(w,\rho(Yw)) = 0$.

Next for $v_i \in \mathcal{V}$ 
\begin{align*}
\q((\rho, Y, \tau)(v_i) ) &= \q(\tau(v_i) + \rho(Yv_i)) \\
\q(v_i) &= \q\left( v_i + a_i u_i + \rho(Yv_i) \right) \\
\q(v_i) &= \q(v_i) + a_i^2 \q(u_i) + a_i\mathrm{B}(v_i,u_i) + \q(Yv_i) \\
\q(Yv_i) &= a_i^2 \q(u_i) + a_i.
\end{align*}
Now we see that
\begin{align*}
(\rho, Y, \tau)(v_i) &= \tau(v_i) + \rho(Yv_i) \\
&= v_i + a_iu_i + \rho(Yv_i) \\
&= v_i + a_i\mathrm{B}\left(v_i,u_i + \frac{1}{a_i}\rho(Yv_i)\right)\left(u_i+\frac{1}{a_i}\rho(Yv_i)\right) \\
&= \tau_{a_i, u_i+\frac{1}{a_i}\rho(Yv_i)}(v_i) \\
&= \tau_{u_i+\frac{1}{a_i}\rho(Yv_i)}(v_i)
\end{align*}
since
\begin{align*}
\q\left( u_i+\frac{1}{a_i}\rho(Yv_i) \right) &= \q(u_i) + \frac{1}{a_i^2} \q(Yv_i)  \\
&= \q(u_i) + \frac{1}{a_i^2} (a_i^2 \q(u_i) + a_i) \\
&= \frac{1}{a_i}.
\end{align*}
Now we define 
\[ \mathcal{U}_Y = \left\{u_1 + \frac{1}{a_1}\rho(Yv_1), u_2 + \frac{1}{a_2}\rho(Yv_2), \ldots, u_l + \frac{1}{a_l}\rho(Yv_l) \right\}  \]
and consider the maximal nonsingular subspace of $W$, $W_1' = \overline{\mathcal{U}_Y \cup \mathcal{X} \cup \mathcal{V} }$.  Further we define
\[ \tau_Y = \tau_{u_l + \frac{1}{a_l}\rho(Yv_l)} \cdots \tau_{ u_2 + \frac{1}{a_2}\rho(Yv_2)}\tau_{  u_1 + \frac{1}{a_1}\rho(Yv_1) } \]
and $Y_0w = Yw$ for $w \in \overline{ \mathcal{U} \cup \mathcal{X} }$ and $Y_0v = 0$ for $v \in \overline{\mathcal{V}}$.
Using this basis $(\rho, Y, \tau)$ is of the form
\[  \begin{bmatrix}
\rho & Y_0\\
0 & \tau_Y
\end{bmatrix} \]
and we have $\tau_Y \in \Orth(\q,k)$ is a product of orthogonal transvections, $Y_0v_i=0$, and $\q(Y_0 w) = 0$ for all $w\in W_1'$.
\end{proof}

\begin{prop}
The involution $\left[\begin{smallmatrix}\rho & \rho Y \\ 0 & \tau \end{smallmatrix} \right]$ fixes pointwise elements of the form $u_i + \frac{1}{a_i} \rho( Y v_i)$.  Moreover $Y_0 \left(u_i + \frac{1}{a_i} \rho(Yv_i) \right) = 0$.
\end{prop}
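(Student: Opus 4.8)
The plan is to verify both assertions by direct computation with the block form $\bigl[\begin{smallmatrix}\rho & \rho Y \\ 0 & \tau\end{smallmatrix}\bigr]$, exploiting the two structural facts already in hand: the corollary that $\tau(w)=w$ forces $\rho(Yw)=Yw$, and the involution relation $Y=\rho Y\tau$. Write $w_i = u_i + \frac{1}{a_i}\rho(Yv_i)$. The first thing I would record is that $\tau(u_i)=u_i$: since $\tau=\tau_{a_l,u_l}\cdots\tau_{a_1,u_1}$ is a product of symplectic transvections induced by the mutually orthogonal set $\mathcal{U}$, and in characteristic $2$ the symplectic form is alternating so $\mathrm{B}(u_i,u_i)=0$, each factor $\tau_{a_j,u_j}$ fixes $u_i$. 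Consequently the corollary applies to every $u_i$ and yields $\rho(Yu_i)=Yu_i$.

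Next I would rewrite $\rho(Yv_i)$ using the involution relation. From $Y=\rho Y\tau$ together with $\rho^2=\id$ one obtains $\rho Y = Y\tau$, and since $\tau(v_i)=v_i+a_iu_i$ (again because the transvections fix each $u_j$ and $\mathrm{B}(u_j,v_i)=\delta_{ij}$), this gives $\rho(Yv_i)=Yv_i+a_iYu_i$. Hence $w_i = u_i + \frac{1}{a_i}Yv_i + Yu_i$. Now I compute the image of $w_i$ termwise, keeping the $W_1$- and $\mathrm{rad}(W)$-parts separate: the $W_1$-vector $u_i$ maps to $\tau(u_i)+\rho(Yu_i)=u_i+Yu_i$ by the previous paragraph, while the radical vector $\rho(Yv_i)$ maps to $\rho^2(Yv_i)=Yv_i$. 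Adding, $\bigl[\begin{smallmatrix}\rho & \rho Y \\ 0 & \tau\end{smallmatrix}\bigr](w_i)=u_i+Yu_i+\frac{1}{a_i}Yv_i=w_i$, which is the fixed-point claim.

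For the second assertion I would pass to the adapted basis, in which the same involution reads $\bigl[\begin{smallmatrix}\rho & Y_0 \\ 0 & \tau_Y\end{smallmatrix}\bigr]$ with $\tau_Y$ a product of the orthogonal transvections $\tau_{w_i}$. Because $\tau_Y$ fixes each of its mutually orthogonal inducing vectors, $\tau_Y(w_i)=w_i$. Decomposing the image of $w_i\in W_1'$ into its $W_1'$-component $\tau_Y(w_i)$ and its $\mathrm{rad}(W)$-component $Y_0(w_i)$, the fixed-point identity just established forces $w_i = w_i + Y_0(w_i)$, so $Y_0(w_i)=0$.

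The computation is routine once $\rho Y = Y\tau$ is available; the step I expect to be the crux is bookkeeping the two components of the genuinely mixed vector $w_i$, and invoking the corollary only for the $u_i$, which $\tau$ actually fixes. I would flag that the clean way to deduce $Y_0(w_i)=0$ is through the fixed-point identity rather than the piecewise definition of $Y_0$, since $w_i$ lies in neither $\overline{\mathcal{U}\cup\mathcal{X}}$ nor $\overline{\mathcal{V}}$.
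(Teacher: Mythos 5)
Your proof is correct and takes essentially the same approach as the paper's: a direct computation of the image of $u_i + \frac{1}{a_i}\rho(Yv_i)$ using the involution relations $Y=\rho Y\tau$, $\rho^2=\id$, $\tau(u_i)=u_i$, $\tau(v_i)=v_i+a_iu_i$, followed by deducing $Y_0\left(u_i+\frac{1}{a_i}\rho(Yv_i)\right)=0$ from the adapted-basis form and the fact that $\tau_Y$ fixes its mutually orthogonal inducing vectors. The only cosmetic difference is that you route the key cancellation through the corollary $\rho(Yu_i)=Yu_i$ and the identity $\rho Y=Y\tau$, whereas the paper substitutes $Yv_i=\rho(Y(v_i+a_iu_i))$ and cancels the two copies of $\rho(Yu_i)$ in characteristic $2$; the underlying algebra is identical.
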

\begin{proof}
Let $\tau = \tau_{\mathcal{U}}$ and $u_i \in \mathcal{U}$ with $v_i \in \mathcal{V}$ such that $\mathrm{B}(u_i,v_i) = 1$.  Since $(\rho, Y, \tau)$ is an involution we have that $\rho^2 = \id$, $\tau^2 = \id$ and $Y = \rho Y \tau$.  
First, we note
\[ Yv_i = \rho( Y \tau(v_i)) = \rho (Y (v_i + a_i u_i)). \]
We compute
\begin{align*}
(\rho, Y, \tau)\left(u_i + \frac{1}{a_i} \rho( Y v_i) \right) &= \tau(u_i) + \rho(Yu_i) + \rho\left( \frac{1}{a_i} \rho(Yv_i) \right) \\
&= u_i + \rho(Yu_i) + \frac{1}{a_i} Yv_i \\
&=  u_i + \rho(Yu_i) + \frac{1}{a_i}\rho (Y (v_i + a_i u_i)) \\
&=  u_i + \frac{1}{a_i} \rho( Y v_i).  \\
\end{align*}
Since 
\[ (\rho, Y, \tau) \left( u_i + \frac{1}{a_i} \rho( Y v_i)  \right) = \tau_Y \left(u_i + \frac{1}{a_i} \rho( Y v_i)  \right) \]
we have $Y_0\left(u_i + \frac{1}{a_i} \rho( Y v_i)  \right) = 0$.
\end{proof}

We conclude with a statement about the conjugacy classes and the fixed point groups for involutions of $\Orth(\q,k)$ in the general case for $\q$ along with remarks on special cases for a general quadratic space. 

\begin{prop} \label{conj_gen_inv}
Let $(\rho_1, Y_1, \tau_1) \in \Orth(\q,k)$ be an involution such that $\tau_1 \in \Orth(\q_{W_1'}, k)$ and $\rho_1 \in \Orth(\q_{\mathrm{rad}(W)},k)$, then $(\rho_1,Y_1, \tau_1)$ is $\Orth(\q,k)$-conjugate to $(\rho_2,Y_2, \tau_2) \in \Orth(\q,k)$ if and only if
\begin{enumerate}
\item $\rho_1$ is conjugate to $\rho_2$ by $\psi \in \Orth(\q_{\mathrm{rad}(W)}, k)$
\item $\tau_1$ is conjguate to $\tau_2$ by $\mu  \in \Sp(\mathrm{B}_{W_1'},k)$
\item there exists  $Z:W_1' \to \mathrm{rad}(W)$ such that $\q(w)= \q(\mu(w)) + \q(Zw)$ for all $w \in W_1'$ and 
\[ \psi( \rho_1 Z + Z \tau_1 + Y_1)\mu^{-1} = Y_2. \]
\end{enumerate}
\end{prop}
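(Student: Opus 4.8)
The plan is to realise every conjugating element of $\Orth(\q,k)$ in the same block-triangular shape as the involutions themselves, and then extract the three conditions from a single block multiplication. First I would fix the decomposition $W = \rad(W) \oplus W_1$ with $W_1$ a maximal nonsingular subspace. Since $\rad(W)$ is intrinsic to $\mathrm{B}$, every $g \in \Orth(\q,k)$ leaves it invariant, so it can be written as
\[ g = \begin{bmatrix} \psi & \psi\rho_1 Z \\ 0 & \mu \end{bmatrix}, \]
where $\psi \in \Orth(\q_{\mathrm{rad}(W)},k)$, $\mu$ is the map induced on $W_1$, and $Z : W_1 \to \rad(W)$; writing the off-diagonal block as $\psi\rho_1 Z$ is no loss of generality since $\rho_1$ is invertible. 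I would first record that $\mu \in \Sp(\mathrm{B}_{W_1},k)$, because the bilinear form on $W_1$ is symplectic and is preserved modulo the radical. Then, decomposing $w = r + w_1$ with $r \in \rad(W)$ and using that all bilinear cross terms against radical vectors vanish, the identity $\q(gw)=\q(w)$ collapses in characteristic $2$ to the single requirement $\q(w_1) = \q(\mu w_1) + \q(Z w_1)$ for all $w_1 \in W_1$ (here $\psi$ and $\rho_1$ preserve $\q_{\mathrm{rad}(W)}$, so they drop out of the norm). This is exactly the norm equation in condition (3), and it is the $\Orth$-membership criterion of Theorem \ref{uniqueC} in disguise.

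For the forward direction I would assume $g(\rho_1,Y_1,\tau_1)g^{-1} = (\rho_2,Y_2,\tau_2)$ and compute directly. Using $g^{-1} = \left[\begin{smallmatrix} \psi^{-1} & \rho_1 Z\mu^{-1} \\ 0 & \mu^{-1}\end{smallmatrix}\right]$ (valid in characteristic $2$), the diagonal blocks of $g(\rho_1,Y_1,\tau_1)g^{-1}$ are $\psi\rho_1\psi^{-1}$ and $\mu\tau_1\mu^{-1}$, which yield conditions (1) and (2). The off-diagonal block simplifies, via $\rho_1^2 = \id$, to $\psi(\rho_1 Z + \rho_1 Y_1 + Z\tau_1)\mu^{-1}$; setting this equal to $\rho_2 Y_2 = \psi\rho_1\psi^{-1}Y_2$ and cancelling gives precisely $Y_2 = \psi(\rho_1 Z + Z\tau_1 + Y_1)\mu^{-1}$, the relation in (3). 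The accompanying norm constraint is supplied by the first paragraph.

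For the converse I would take $\psi, \mu, Z$ satisfying (1)--(3), form $g$ as above, and run the computation backwards. The norm equation in (3) guarantees $g \in \Orth(\q,k)$ by the first paragraph, while (1), (2), and the $Y$-relation of (3) force the three blocks of $g(\rho_1,Y_1,\tau_1)g^{-1}$ to coincide with those of $(\rho_2,Y_2,\tau_2)$, so $g$ realises the conjugacy.

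The main obstacle is the characteristic-$2$ quadratic bookkeeping rather than the bilinear algebra. One must verify that orthogonality of the conjugator is \emph{equivalent} to the norm identity $\q(w)=\q(\mu w)+\q(Zw)$, and then reconcile the fact that $\mu$ is only required to be symplectic with the fact that $\tau_1,\tau_2$ are genuine orthogonal transvection products. The reconciliation is that the quadratic discrepancy $\q(\mu w)+\q(w)$ produced by a symplectic conjugation must land in $\q(\rad(W))$ and is absorbed by $Z$; this is the existence half of Theorem \ref{uniqueC}, and it is the point where the \emph{solvability} of (3), not merely its algebraic form, has to be argued. A secondary care point is that $\tau_1$ and $\tau_2$ may be presented on different maximal nonsingular complements $W_1'$, so I would first pass to a common decomposition before comparing the blocks.
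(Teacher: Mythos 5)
Your proposal is correct and takes essentially the same route as the paper: the paper's entire proof is the single block computation $\bigl[\begin{smallmatrix}\psi & \psi Z \\ 0 & \mu\end{smallmatrix}\bigr]\bigl[\begin{smallmatrix}\rho_1 & \rho_1 Y_1 \\ 0 & \tau_1\end{smallmatrix}\bigr]\bigl[\begin{smallmatrix}\psi^{-1} & Z\mu^{-1} \\ 0 & \mu^{-1}\end{smallmatrix}\bigr]$, with the norm identity $\q(w)=\q(\mu(w))+\q(Zw)$ playing exactly the role you give it, namely the $\Orth(\q,k)$-membership criterion for the conjugator. Your intermediate expression for the off-diagonal block misplaces one factor of $\rho_1$ (with your parametrization it is $\psi(Z+\rho_1 Y_1+\rho_1 Z\tau_1)\mu^{-1}$), but this washes out after multiplying through by $\rho_1$ \textemdash{} and the paper's own display has the same kind of slip \textemdash{} so the final extracted conditions, and the argument, match the paper's.
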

\begin{proof}
The list of properties in the proposition is equivalent to $\mathcal{I}_{(\psi, Z, \mu)} ( (\rho_1, Y_1, \tau_1)) = (\rho_2, Y_2, \tau_2)$.  To see that we compute 
\[  \begin{bmatrix}
	\psi & \psi Z \\
	0 & \mu
\end{bmatrix}  
\begin{bmatrix}
	\rho_1 & \rho_1 Y_1 \\
	0 & \tau_1
\end{bmatrix}
\begin{bmatrix}
	\psi^{-1} &  Z\mu^{-1} \\
	0 & \mu^{-1}
\end{bmatrix}   = 
\begin{bmatrix}
	\psi \rho_1 \psi^{-1} & \psi( \rho_1 Z + Z\tau_1 + Y_1) \mu^{-1} \\
	0 & \mu \tau_1 \mu^{-1}
\end{bmatrix}.  \]
\end{proof}

\begin{prop}
Let $(\rho, Y, \tau) \in \Orth(\q,k)$ be an involution such that $\tau \in \Orth(\q_{W_1'}, k)$ and $\rho \in \Orth(\q_{\mathrm{rad}(W)},k)$, then $(\psi, D,\mu) \in \Orth(\q,k)$ is fixed  pointwise by $\mathcal{I}_{(\rho,Y, \tau)}$ if and only if
\begin{enumerate}
\item $\psi$ is fixed by conjugation by $\rho$ in $\Orth(\q_{\mathrm{rad}(W)},k)$
\item $\mu$ is fixed by conjugation by $\tau \in \Orth(\q_{W_1'},k)$
\item there exists $Z:W_1' \to \mathrm{rad}(W)$ such that $\q(w)= \q(\mu(w)) + \q(Zw)$ for all $w \in W_1'$ and 
\[ Y + \psi^{-1} Y \mu = Z + \rho Z \tau \]
\end{enumerate}
\end{prop}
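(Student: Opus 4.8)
The plan is to mirror the matrix computation of Proposition \ref{conj_gen_inv}, now setting the conjugated element equal to itself rather than to a second involution. Write $M = (\psi, D, \mu) = \bigl[\begin{smallmatrix} \psi & \psi D \\ 0 & \mu \end{smallmatrix}\bigr]$ and $N = (\rho, Y, \tau) = \bigl[\begin{smallmatrix} \rho & \rho Y \\ 0 & \tau \end{smallmatrix}\bigr]$. Since $N$ is an involution, $N^{-1} = N$, so $(\psi, D, \mu)$ is fixed by $\mathcal{I}_{(\rho, Y, \tau)}$ exactly when $NMN = M$. First I would expand $NMN$ as a block matrix. The two diagonal blocks give $\rho\psi\rho = \psi$ and $\tau\mu\tau = \mu$, which, using $\rho^2 = \tau^2 = \id$, are precisely conditions (1) and (2); equivalently they are the commutation relations $\rho\psi = \psi\rho$ and $\tau\mu = \mu\tau$.

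The content lies in the off-diagonal block, which reads
\[ \rho\psi\rho Y + \rho\psi D\tau + \rho Y\mu\tau = \psi D. \]
Using (1) to replace $\rho\psi\rho Y$ by $\psi Y$, and then composing on the left with $\psi^{-1}$ (noting $\psi^{-1}\rho\psi = \rho$, again from (1)), this becomes
\[ Y + \rho D\tau + \psi^{-1}\rho Y\mu\tau = D. \]
The key simplification is of the last summand: from the involution relation $Y = \rho Y\tau$ I obtain $\rho Y = Y\tau$, whence, using (2) in the form $\tau\mu\tau = \mu$,
\[ \rho Y\mu\tau = Y\tau\mu\tau = Y\mu, \]
so that $\psi^{-1}\rho Y\mu\tau = \psi^{-1}Y\mu$. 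Rearranging in characteristic $2$ then yields exactly $Y + \psi^{-1}Y\mu = D + \rho D\tau$.

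To match condition (3), I would observe that membership $(\psi, D, \mu) \in \Orth(\q,k)$ is equivalent to the norm identity $\q(w) = \q(\mu(w)) + \q(Dw)$ for all $w \in W_1'$: indeed $\psi(Dw) \in \mathrm{rad}(W)$ is orthogonal to $\mu(w)$, so the cross term vanishes and $\psi$ preserves $\q$ on $\mathrm{rad}(W)$. Thus the element's own middle block $Z = D$ is the required map, exactly as $Z$ is the middle block of the conjugator in Proposition \ref{conj_gen_inv}, and the displayed off-diagonal equation is then $Y + \psi^{-1}Y\mu = Z + \rho Z\tau$. Reading the chain of equivalences backwards recovers $NMN = M$ from (1), (2), (3), giving both directions of the statement.

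I expect the main obstacle to be the bookkeeping in the third step: correctly deploying the involution relation $\rho Y = Y\tau$ together with $\tau\mu\tau = \mu$ to collapse $\rho Y\mu\tau$ to $Y\mu$, and ensuring that every cancellation is the characteristic-$2$ one, so that transferring $\rho D\tau$ across the equality is legitimate. A secondary point worth stating carefully is that the witness $Z$ of (3) is to be taken as $D$ itself, the accompanying norm condition being nothing more than the requirement that $(\psi, D, \mu)$ lie in $\Orth(\q,k)$.
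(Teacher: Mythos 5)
Your proof is correct and takes essentially the same route as the paper's: compute $NMN$ blockwise, read conditions (1) and (2) off the diagonal blocks, and collapse the off-diagonal block using the involution relation $Y = \rho Y \tau$ together with the commutation relations from (1) and (2) to arrive at $Y + \psi^{-1}Y\mu = D + \rho D\tau$ in characteristic $2$. Your explicit identification of the witness $Z$ with the element's own middle block $D$ (and of the norm condition in (3) with membership of $(\psi,D,\mu)$ in $\Orth(\q,k)$) is precisely how the paper's proof handles it, where the fixed element is simply written $(\psi, Z, \mu)$ from the outset.
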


\begin{proof}
The list of properties above is equivalent to an element of the form $(\psi, Z, \mu) \in \Orth(\q,k)$ being fixed by  $\mathcal{I}_{(\rho, Y, \tau)}$.  We can compute
\[ \begin{bmatrix}
	\rho & \rho Y \\
	0 & \tau
\end{bmatrix}
\begin{bmatrix}
	\psi & \psi Z \\
	0 & \mu
\end{bmatrix}  
\begin{bmatrix}
	\rho & \rho Y \\
	0 & \tau
\end{bmatrix} = 
\begin{bmatrix}
	\rho \psi \rho & \rho \psi Y \tau + (\rho \psi Z + \rho Y \mu)\tau \\
	0 & \tau \mu \tau
\end{bmatrix}.  \]
So we have $\rho \psi \rho = \psi$ and $\tau \mu \tau = \mu$ right away.   If we consider 
\begin{align*}
\psi Z &= \rho \psi Y \tau + (\rho \psi Z + \rho Y \mu)\tau  \\
&= \rho \psi Y \tau + \rho \psi Z \tau + \rho Y \mu\tau  \\
&= \psi \rho Y \tau + \psi \rho Z \tau + \rho Y  \tau \mu \\
&= \psi Y + \psi \rho Z \tau + Y \mu \\
Z + \rho Z \tau &= Y + \psi^{-1} Y \mu
\end{align*}
we have the result, recalling that in order for $(\rho, Y , \tau)$ to be order $2$ we need $Y= \rho Y \tau$.
\end{proof}

\begin{rmk}
When $\mathrm{rad}(W)$ is anisotropic $\Orth(\q_{\mathrm{rad}(W)},k) = \{\id \}$ and $Z$ is unique to a given $\mu$.

When $\mathrm{rad}(W)$ is totally isotropic we have $\Orth(\q_{\mathrm{rad}(W)},k) \cong \GL_s(k)$, $Z \in \Mat_{s,2r}(k)$ and $\mu \in \Orth(\q_{W_1'},k)$.
\end{rmk}

\bibliographystyle{plain}

\end{document}